\def\forkindep{\mathrel{\raise0.2ex\hbox{\ooalign{\hidewidth$\vert$\hidewidth\cr\raise-0.9ex\hbox{$\smile$}}}}}
\theoremstyle{definition}
\newtheorem{thm}{Theorem}[section]
\newtheorem{cor}[thm]{Corollary}
\newtheorem{prop}[thm]{Proposition}
\newtheorem{lem}[thm]{Lemma}
\newtheorem{fact}[thm]{Fact}
\theoremstyle{definition}
\newtheorem{dfn}[thm]{Definition}
\newtheorem{exm}[thm]{Example}
\newtheorem{rem}[thm]{Remark}
\newtheorem{theo}[thm]{Theorem}
\newtheorem{obser}[thm]{Observation}
\newtheorem*{thm*}{Theorem}
\DeclareMathOperator{\acl}{acl}
\DeclareMathOperator{\dcl}{dcl}
\DeclareMathOperator{\Acl}{Acl}
\DeclareMathOperator{\tp}{tp}
\DeclareMathOperator{\cl}{cl}
\DeclareMathOperator{\qftp}{qftp}
\DeclareMathOperator{\CL}{CL}
\DeclareMathOperator{\indep}{indep}
\begin{document}
\title{Bi-Colored Expansions of Geometric Theories }
\author{S. Jalili\footnote{Department of Mathematics and Computer Science, Amirkabir University of Technology (Tehran Polytechnic), Hafez Avenue 15194, P.O.Box 15875-4413,
	Tehran, Iran. E-mail: somaye.jalili507@gmail.com.},
M.  Pourmahdian\footnote{Department of Mathematics and Computer Science, Amirkabir University of Technology (Tehran Polytechnic), Hafez Avenue 15194, P.O.Box 15875-4413, Tehran, Iran; School of Mathematics, Institute for Research in Fundamental Sciences (IPM), P.O. Box 19395-5746, Tehran, Iran. E-mail: pourmahd@ipm.ir.},
M.  Khani\footnote{ Department of Mathematical Sciences, Isfahan University of
Technology, Isfahan, 84156-83111, Iran, mohsen.khani@iut.ac.ir}. \\
}
\date{}
\maketitle
\begin{abstract}
This paper concerns the study of Bi-colored expansions of geometric theories  in the light of the Fra\"{i}ss\'{e}-Hrushovski construction method.
Substructures of models of a geometric theory $T$ are expanded by a color predicate $p$, and
the dimension function associated with the pre-geometry of the $T$-algebraic closure operator together with a real number $0<\alpha\leqslant 1$ is used to define a pre-dimension function $\delta_{\alpha}$. The pair  $(\mathcal{K}_{\alpha}^{+},\leqslant_{\alpha})$ consisting of all such expansions with a hereditary positive pre-dimension along with the notion of substructure  $\leqslant_{\alpha}$ associated to $\delta_{\alpha}$
is then used as a natural setting for the study of generic bi-colored expansions in the style of Fra\"{i}ss\'{e}-Hrushovski construction.  Imposing certain natural conditions on $T$, enables us to introduce a complete axiomatization $\mathbb{T}_{\alpha}$ for the class of rich structures in this class. We will show that if $T$ is a dependent theory  (NIP) then so is $\mathbb{T}_{\alpha}$. We further prove that whenever $\alpha$ is rational the strong dependence transfers to $\mathbb{T}_{\alpha}$. We conclude by showing that if $T$ defines a linear order and $\alpha$ is irrational then $\mathbb{T}_{\alpha}$ is not strongly dependent.
\\
\textit{Keywords:} Geometric Theories; Fra\"{i}ss\'{e}-Hrushovski construction; Bi-Colored Expansions; Rich Structures; Dependence (NIP); Strong Dependence.
\\
\textit{AMS Subject Classification 2020} 03C45.
\end{abstract}
\section{Introduction}
Extending  methods  of stability and applying them to a larger class of theories is a dominating theme in current research of pure model theory.
In addition to showing the prevalence  of these methods, this line of research also presents potential applications beyond model theory. To instantiate  general concepts in stability-hierarchy and perhaps examine some related open questions/conjectures one would need to look for some new examples, conceivably through  adapting known model-theoretic methods.
Our main aim in this paper
 is to study the Fra\"{i}ss\'{e}-Hrushovski method beyond the realm of stability/simplicity.   We aim to take the first step of the more general plan of modifying the Fra\"{i}ss\'{e}-Hrushovski construction to study Bi-colored expansions of geometric theories without the independence property (NIP).
\par
 There are two  major (independent) research themes with which our proposed plan is naturally connected, as explained below.
 \par
  Our first source of contribution is to the Bi-colored fields, that is the comprehensive studies concerning the expansion of algebraically closed fields with a unary predicate $p$---often called a color predicate--- interpreted either  by an arbitrary set (Black fields)
 \cite{BH00,BH01,B99}, an additive subgroup (Red fields)
 \cite{BPZ07,B01}, or a multiplicative subgroup (Green fields)
 \cite{BMPWZ09,B01}. All examples obtained by this construction are $\omega$-stable, either with Morley rank $\omega$ (non-collapsed constructions)\cite{BH01,B99,B01} or finite Morley rank (collapsed constructions)\cite{BH00,BMPWZ09}.
 \par
The other theme that our results are naturally connected to is the study of the generic expansions of models of geometric theories by a unary predicate which can be interpreted either by a submodel (Lovely pairs)\cite{BDO11,BV10,B83,Y02} or more generally by  submodels of reducts \cite{BV20,CP98,DG06,E21}.
\par
While our methodology is inspired by the first subject,  in our study of the  bi-colored expansions  of theories, we do not restrict ourselves, a priori, to any particular class of theories in stability hierarchy. This would give a clearer understanding of the axiomatization of the resulting expansion in the style of the second subject. Our presentation somehow shows that both subjects could be put in a common framework where the  Fra\"{i}ss\'{e}-Hrushovski construction is seen as a  sort of generalization of the theme of the generic expansions.

In more technical terms,
we assume that $T$ is a complete geometric theory without finite models in a countable language
$\mathcal{L}$.
As a convenient routine, we also assume that
$T$
admits elimination of quantifiers in
$\mathcal{L}$.
As $T$ is geometric, the algebraic closure operator gives rise to a pre-geometry and subsequently a notion of dimension function $\dim$, where  $\dim(\bar{a})$ for any finite tuple $\bar{a}$  is the size of a basis of $\bar{a}$. We further require that $T$ satisfies the free-amalgamation property and its corresponding pre-geometry is \textit{indecomposable}.  Subsequently, we expand
$\mathcal{L}$
to
$\mathcal{L}_{p}=\mathcal{L}\cup \{p\}$
by adding a unary predicate
$p$
called the \textit{coloring predicate}.
We consider
the class of all
$\mathcal{L}_{p}$-structures
whose universe
$M$ is a model of $T^{\forall}$, fix a real number
$0<\alpha \leqslant 1$, and for
$M\models T^{\forall}$
and a finite subset
$A$ of
$M$, define the pre-dimension function
\begin{align*}
\delta_{\alpha}(A)=\dim(A)-\alpha|p(A)|.
\end{align*}
Now, as $T$ is geometric, the dimension function enjoys certain definability conditions which makes  $\mathcal{K}_{\alpha}^{+}$,
the class of $\mathcal{L}_{p}$-structures
$M\in\mathcal{K}_{\alpha}^{+}$
such that
$\delta_{\alpha}(A)\geqslant 0$
for all
$A\subseteq_{\text{fin}}M$,
$\mathcal{L}_{p}$-axiomatizable.
\par
There is a notion $\leqslant_{\alpha}$ of closed substructures in $\mathcal{K}_{\alpha}^{+}$ associated to the pre-dimension function $\delta_{\alpha}$. The free-amalgamation of $T$ implies that $(\mathcal{K}_{\alpha}^{+},\leqslant_{\alpha})$ has the amalgamation property, and this guaranties that $(\mathcal{K}_{\alpha}^{+},\leqslant_{\alpha})$ has $\lambda$-rich models, for each $\lambda \geqslant \aleph_0$.
\par
Using ideas from \cite{L07}, we give a complete axiomatization $\mathbb{T}_{\alpha}$ for the class all $\omega_1$-rich structures. This axiomatization together with a description of types enables us to characterize  coheirs in models of  $\mathbb{T}_{\alpha}$ and  prove that if $T$ is dependent then so is  $\mathbb{T}_{\alpha}$.
\par
Yet we will show that our construction separates between rational and irrational $\alpha$ via the notion of strong dependence.
The strong dependence is transferred  to $\mathbb{T}_{\alpha}$ for rational $\alpha$,  but this is not the case when $\alpha$ is irrational. Indeed, assuming that $T$ defines a linear order and $\alpha$ is irrational we show that $\mathbb{T}_{\alpha}$ is not strongly dependent. This fact mimics the well-known result in ab-initio Fra\"{i}ss\'{e}-Hrushovski construction that differentiate between rational/irrational $\alpha$ via $\omega$-stability/strict-stability.
\par
The structure of the paper is as follows. In section 2 after fixing the setting for our underlying theory $T$, we introduce the class $(\mathcal{K}^+_{\alpha},\leqslant_{\alpha})$.
In Section 3 we prove certain properties of this class, present an axiomatization
$\mathbb{T}_\alpha$ for its rich structures, and prove it to be a complete theory.
Finally in Section 4 we have given the mentioned results on dependence and strong dependence of $\mathbb{T}_\alpha$.
\section{Bi-Colored Structures}
\subsection{Preliminaries, Setting and Conventions}
Let
$T$
be
as in the introduction.
It is clear, by the quantifier-elimination
assumption,
that
$\acl_{M_{1}}(M_{0})\cong \acl_{M_{2}}(M_{0})$ for all $M_0\subseteq M_1,M_2\models T$.
\par \noindent
We use capital letters
$M, N, P,K$ to denote the
$\mathcal{L}$-structures and $ A, B, C,D$
and $X, Y, Z$
respectively for
 finite and infinite sets.
Instead of
$X\cup Y$ we would write $XY$. For tuples $\bar{a},\bar{b}$ in a model of $M$ of $T$,  by the {\em quantifier-free type} of $\bar{a}$ over $\bar{b}$, denoted by $\qftp_{\mathcal{L}}(\bar{a}/\bar{b})$, we mean the set of all quantifier-free formulas
$\varphi(\bar{x},\bar{b})$  such that $M\models \varphi(\bar{a},\bar{b})$.
\par
The theory $T$
is assumed to be \textit{geometric}, so
it
eliminates the quantifier
$\exists^{\infty}$
and the algebraic closure, $\acl$, satisfies the exchange property in its models.
We denote the
dimension obtained by
$\acl$ in
$T$ by $\dim$. So
 for  tuples $\bar{a}=(a_{1},...,a_{n})$ and $\bar{b}=(b_1,\dots,b_m)$ of $M$ and
 a set $Y\subseteq M$, we let $\dim(\bar{a}/Y)$
be the size of the $\acl$-base of $\{a_1,\ldots,a_n\}$ over $Y$,
and $\dim(\bar{a}/\bar{b})=\dim(\bar{a}/\{b_1,\dots,b_m\})$.
 If $\varphi(\bar{x},\bar{y})$ is an $\mathcal{L}$-formula and $\bar{b}\in M^{|\bar{y}|}$, then
  $\dim(\varphi(M,\bar{b}))=\max \{ \dim(\bar{a}/\bar{b}):\hspace*{3mm}\ M\models \varphi(\bar{a},\bar{b})\}$.
\par
We say that
$Y$
is
$\dim$-independent from
$Z$
over
$X$, and write
$Y\forkindep_{X}^{\dim} Z$,
if
$ \dim(\bar{a}/X) =\dim(\bar{a}/XZ)$
for each tuple
$\bar{a}\in Y^{|\bar{a}|}$.
If in addition, $Y\cap Z=X$, we
say that $Y$ and $Z$ are {\em free} over $X$.  To emphasize the freeness of
$Y$ and $Z$ over $X$ we would write the set $YZ$ as $Y\oplus_{X} Z$ (we also use the same notation for the free-amalgam of structures).
\par
Some  properties of
the $\dim$  function  and $\dim$-independence
is recalled in the following for later reference, assuming that
all subsets and tuples are from (a sufficiently  saturated) model
$M$ of
$T$.
\begin{fact}\label{fact:1.1}
The $\dim$ function has the following properties.
\begin{itemize}
\item\textit{Finite character.}
$\dim(\bar{a}/Y)=\min\{\dim(\bar{a}/B):\hspace*{3mm}B\subseteq_{\text{fin}} Y\}$.
\item \textit{Additivity.}
$\dim(\bar{a}\bar{b}/Z) = \dim(\bar{a}/ Z) + \dim(\bar{b}/\bar{a} Z)$.
\item\textit{Monotonicity.}
$\dim(\bar{a}/Y) \geqslant \dim(\bar{a}/Z)$
for
$Y \subseteq Z$.
\item \textit{Definability.}
For each
formula
$\varphi(\bar{x},\bar{y})$
and
$k\leqslant |\bar{x}|$ the set of tuples
$\bar{b}\in M^{|\bar{y}|}$ with  $\dim (\varphi(M, \bar{b}))=k$ is definable by a formula
$d_{\varphi,k}(\bar{y})$.

\item \textit{$\bigvee$-Definability.}
If
$\dim(\bar{a}/\bar{b})\leqslant n$,
then there is
a formula $\psi(\bar{x},\bar{y})$ such that
$\psi(\bar{x},\bar{b})\in \qftp_{\mathcal{L}}(\bar{a}/\bar{b})$
and  if
$M\models \psi(\bar{a}^{'},\bar{b}^{'})$ then
$\dim(\bar{a}^{'}/\bar{b}^{'})\leqslant n$, for any
$\bar{a}^{'},\bar{b}^{'}$.
\end{itemize}
\end{fact}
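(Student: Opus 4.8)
The plan is to treat the three ``pregeometry'' clauses --- monotonicity, finite character, additivity --- separately from the two ``definability'' clauses: the first three use only that $\acl$ is a monotone, finitary closure operator with the exchange property, while the last two invoke, in addition, the elimination of $\exists^{\infty}$ in $T$.

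\emph{Monotonicity} is immediate from $\acl(Y)\subseteq\acl(Z)$ for $Y\subseteq Z$, since a maximal $\acl(Y)$-independent sub-tuple of $\bar a$ stays $\acl(Z)$-independent only up to algebraicity. For \emph{finite character}, fix an $\acl(Y)$-basis $\bar a_0$ of $\bar a$; each remaining coordinate of $\bar a$ lies in $\acl(Y\bar a_0)$, hence, as $\acl$ is finitary, already in $\acl(B\bar a_0)$ for some finite $B\subseteq Y$, so any finite $B\subseteq Y$ absorbing these finitely many parameter sets has $\bar a\subseteq\acl(B\bar a_0)$ while $\bar a_0$ stays $\acl(B)$-independent by monotonicity; thus $\dim(\bar a/B)=|\bar a_0|=\dim(\bar a/Y)$, which is the minimum because $\dim(\bar a/B)\geqslant\dim(\bar a/Y)$ always. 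For \emph{additivity} over $Z$, choose a sub-tuple $\bar a_0$ of $\bar a$ that is an $\acl(Z)$-basis of $\bar a$ and a sub-tuple $\bar b_0$ of $\bar b$ that is an $\acl(Z\bar a)$-basis of $\bar b$; a routine exchange computation shows $\bar a_0\bar b_0$ is $\acl(Z)$-independent and $\acl(Z\bar a_0\bar b_0)=\acl(Z\bar a\bar b)$, so $\bar a_0\bar b_0$ is an $\acl(Z)$-basis of $\bar a\bar b$ and $\dim(\bar a\bar b/Z)=|\bar a_0|+|\bar b_0|=\dim(\bar a/Z)+\dim(\bar b/\bar a Z)$.

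For \emph{definability} I would show, by induction on $|\bar x|$, that $\{\bar b:\dim(\varphi(M,\bar b))\geqslant k\}$ is $\mathcal L$-definable for every $\mathcal L$-formula $\varphi(\bar x,\bar y)$ and every $k$; the displayed clause then follows since $\{\dim(\varphi(M,\bar b))=k\}$ is the set-difference of those for $k$ and $k+1$. In the base case $|\bar x|=1$ a finite definable set over $\bar b$ is contained in $\acl(\bar b)$, while an infinite one is not, so $\dim(\varphi(M,\bar b))\geqslant 1$ holds exactly when $\varphi(M,\bar b)$ is infinite --- a condition uniformly definable in $\bar b$ precisely because $T$ eliminates $\exists^{\infty}$ --- and $\dim\geqslant 0$ simply says $\varphi(M,\bar b)\neq\emptyset$. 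For the inductive step write $\bar x=x_1\bar x'$ and put $S_j=\{x_1:\dim(\{\bar a':M\models\varphi(x_1,\bar a',\bar b)\})\geqslant j\}$, which is definable with parameters $x_1,\bar b$ by the induction hypothesis applied to $\varphi(x_1,\bar x',\bar y)$; using additivity one checks that $\dim(\varphi(M,\bar b))\geqslant k$ iff $S_k\neq\emptyset$ or $S_{k-1}$ is infinite, and both of these are again uniformly $\bar b$-definable (the second by elimination of $\exists^{\infty}$).

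For $\bigvee$\emph{-definability} I would argue directly, with no compactness. Since $\dim(\bar a/\bar b)\leqslant n$, there is a sub-tuple $\bar a_0$ of $\bar a$ of length $\leqslant n$ with $\bar a\subseteq\acl(\bar a_0\bar b)$; let $\bar x_0$ be the sub-tuple of $\bar x$ in the positions of $\bar a_0$. For each coordinate $a_i$ of $\bar a$ fix an $\mathcal L$-formula $\theta_i(x_i,\bar x_0,\bar y)$ --- quantifier-free, by elimination of quantifiers in $T$ --- and $k_i\in\mathbb N$ witnessing $a_i\in\acl(\bar a_0\bar b)$, i.e.\ $M\models\theta_i(a_i,\bar a_0,\bar b)$ and $|\theta_i(M,\bar a_0,\bar b)|\leqslant k_i$; the condition ``$|\theta_i(M,\bar x_0,\bar y)|\leqslant k_i$'' is, after one more appeal to quantifier elimination, quantifier-free in $(\bar x_0,\bar y)$. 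Let $\psi(\bar x,\bar y)$ be the conjunction over $i$ of $\theta_i(x_i,\bar x_0,\bar y)$ together with these bounding formulas; then $\psi(\bar x,\bar b)\in\qftp_{\mathcal L}(\bar a/\bar b)$, and whenever $M\models\psi(\bar a',\bar b')$ every coordinate of $\bar a'$ lies in $\acl(\bar a'_0\bar b')$, so $\acl(\bar a'\bar b')=\acl(\bar a'_0\bar b')$ and $\dim(\bar a'/\bar b')\leqslant|\bar a'_0|\leqslant n$. Everything here, and in the first three clauses, is bookkeeping with the pregeometry axioms and quantifier elimination; the genuine content --- and the step I expect to cost the most care --- is the Definability clause, whose base case is exactly where elimination of $\exists^{\infty}$ is indispensable and whose inductive step needs ``$\dim(\varphi(M,\bar b))\geqslant k$'' phrased so that the induction hypothesis applies uniformly across the parameters $x_1,\bar b$.
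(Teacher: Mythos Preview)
The paper states this as a \emph{Fact} and gives no proof at all; it is recalled as standard background on geometric theories. So there is no ``paper's own proof'' to compare against, and your task was really to supply a justification where the authors chose not to.

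Your argument is essentially correct and follows the natural line one would expect. A couple of minor points worth tightening. In the monotonicity clause your sentence is backwards: a maximal $\acl(Y)$-independent sub-tuple need not remain $\acl(Z)$-independent; the correct observation is that any $\acl(Z)$-independent set is automatically $\acl(Y)$-independent (since $\acl(Y)\subseteq\acl(Z)$), hence $\dim(\bar a/Z)\leqslant\dim(\bar a/Y)$. In the base case of Definability, the equivalence ``$\varphi(M,\bar b)$ is infinite $\Leftrightarrow$ some realization lies outside $\acl(\bar b)$'' deserves one line of justification: if $\varphi(M,\bar b)$ were infinite yet contained in $\acl(\bar b)$, compactness in a saturated model would produce a non-algebraic realization, a contradiction; the converse is immediate. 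Your inductive step is fine --- the biconditional ``$\dim\geqslant k$ iff $S_k\neq\emptyset$ or $S_{k-1}$ infinite'' is exactly what additivity gives, and both disjuncts are uniformly definable by the induction hypothesis and elimination of $\exists^\infty$. The $\bigvee$-definability argument is clean and correct.
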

We need a stronger version of Definability, as stated below.
The first part of the following statement  appears in \cite{CP98}, Lemma 2.3.
\begin{lem}\label{defacl}
Let $M$ be an $\omega$-saturated model of $T$, $\varphi(\bar{x};\bar{y})$ an
$\mathcal{L}$-formula and
 $k$ a natural number. Then
\begin{enumerate}
\item
There is a formula
$H_\varphi(\bar{y})$ that defines
the set of tuples
 $\bar{b}\in M^{|\bar{y}|}$
 such that there exists
 $\bar{a}$
 satisfying
  $\varphi(\bar{x};\bar{b})$
  with
   $\bar{a}\cap \acl(\bar{b})=\emptyset$.
\item
There exists an
 $\mathcal{L}$-formula
 $D_{\varphi,k}(\bar{y})$
 which defines the set of tuples
 $\bar{b}\in M^{|\bar{y}|}$
 for which there exists
$\bar{a}$
 satisfying
  $\varphi(\bar{x};\bar{b})$
 with
  $\dim(\bar{a}/\bar{b})\geqslant  k$
  and
 $\bar{a}\cap \acl(\bar{b})=\emptyset$.
\end{enumerate}
\end{lem}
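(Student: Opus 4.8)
Part~(1) is, up to the present formulation, Lemma~2.3 of \cite{CP98}, so I would either quote it or recall its proof, whose only ingredients are the elimination of $\exists^{\infty}$ (which holds as $T$ is geometric) and the Definability clause of Fact~\ref{fact:1.1}. The argument is by induction on $|\bar x|$: for $|\bar x|=1$, there is $a$ with $M\models\varphi(a,\bar b)$ and $a\notin\acl(\bar b)$ exactly when $\dim(\varphi(M,\bar b))\geqslant 1$, and the latter is $\mathcal{L}$-definable by the Definability clause; for $|\bar x|>1$ one peels off one coordinate at a time, at each stage converting the constraint ``the peeled coordinate avoids the algebraic closure of the current parameters'' into an $\mathcal{L}$-formula by $\exists^{\infty}$-elimination and invoking the inductive hypothesis for the remaining coordinates. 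The one delicate point is that in $H_{\varphi}$ the algebraic closure is taken over $\bar b$ alone, whereas peeling coordinates naturally enlarges the parameter set; this is reconciled by a case split over the subset $I\subseteq\{1,\dots,|\bar x|\}$ recording which coordinates of the intended witness form an $\acl(\bar b)$-basis of the whole tuple, each case reducing to the inductive hypothesis. Since every formula produced is parameter-free, $H_{\varphi}(\bar y)$ is an $\mathcal{L}$-formula.

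For part~(2) the plan is to bootstrap from part~(1). By the $\bigvee$-Definability clause of Fact~\ref{fact:1.1} there is a countable set $\Psi$ of $\mathcal{L}$-formulas $\psi(\bar x,\bar y)$ such that, for all $\bar a,\bar b$, $\dim(\bar a/\bar b)<k$ iff $M\models\psi(\bar a,\bar b)$ for some $\psi\in\Psi$. For finite $\Psi_{0}\subseteq\Psi$ set $\varphi_{\Psi_{0}}:=\varphi\wedge\bigwedge_{\psi\in\Psi_{0}}\neg\psi$ and let $H_{\Psi_{0}}(\bar y):=H_{\varphi_{\Psi_{0}}}(\bar y)$ be the $\mathcal{L}$-formula furnished by part~(1). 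Using $\omega$-saturation of $M$ and compactness, the existence of $\bar a$ in $M$ with $\varphi(\bar a,\bar b)$, $\bar a\cap\acl(\bar b)=\emptyset$ and $\dim(\bar a/\bar b)\geqslant k$ is equivalent to consistency over $\bar b$ of the partial type
\[
\{\varphi(\bar x,\bar y)\}\cup\{\neg\psi(\bar x,\bar y):\psi\in\Psi\}\cup\bigl\{\,\exists^{\infty}z\,\chi(z,\bar y)\vee\neg\chi(x_{i},\bar y)\ :\ i\leqslant|\bar x|,\ \chi\in\mathcal{L}\,\bigr\}
\]
(the formula $\exists^{\infty}z\,\chi(z,\bar y)$ being the genuine $\mathcal{L}$-formula provided by elimination of $\exists^{\infty}$), and hence, again by compactness, to $M\models H_{\Psi_{0}}(\bar b)$ holding for every finite $\Psi_{0}\subseteq\Psi$. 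Thus the set $D_{\varphi,k}(M)$ I am after is the descending intersection of the $\mathcal{L}$-definable sets $H_{\Psi_{0}}(M)$.

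What remains — and this is the hard part I expect — is to show that this intersection stabilises, i.e.\ that a single $H_{\Psi_{0}}$ already defines $D_{\varphi,k}$; this is where geometricity of $T$ must be used substantively rather than formally. I would prove it by a de-diagonalisation: a realisation of $\varphi$ disjoint from $\acl(\bar b)$ of dimension $\geqslant k$ exists iff, for one of the \emph{finitely many} ways of splitting $\bar x$ into a putative $k$-element $\acl(\bar b)$-independent part and a remainder and prescribing the $\acl(\bar b)$-interalgebraicity pattern on the remainder, a correspondingly rigidified formula has a solution, and for each such pattern the existence of a solution is governed by finitely many of the $\mathcal{L}$-formulas $d_{\varphi',j}$ of Fact~\ref{fact:1.1}; summing over the finitely many patterns bounds the number of $\psi$'s that matter, giving the required $\mathcal{L}$-formula $D_{\varphi,k}(\bar y)$. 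The recurring nuisance throughout both parts is the same: ``$a\in\acl(\bar b)$'' is only $\bigvee$-definable, so it must always be traded — via $\exists^{\infty}$-elimination — for a genuine $\mathcal{L}$-condition, and one must keep careful track of the parameter set over which each algebraic closure is computed; the combinatorial case analyses over subsets of $\{1,\dots,|\bar x|\}$ are precisely the device for doing so.
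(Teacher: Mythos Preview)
Your plan for part~(1) is fine; the paper simply cites \cite{CP98}.

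For part~(2), your route is correct but needlessly indirect. The paper's proof is entirely constructive and uses only part~(1), with no compactness, no saturation, and no appeal to the formulas $d_{\varphi',j}$: for each permutation $\sigma\in S_n$ (where $n=|\bar x|$), iterate part~(1) $k$ times to peel off $x_{\sigma(1)},\dots,x_{\sigma(k)}$, at each step expressing ``$x_{\sigma(i)}\notin\acl(x_{\sigma(i+1)},\dots,x_{\sigma(n)},\bar y)$'' as an $H$-formula; this yields a formula $\rho_\sigma(x_{\sigma(k+1)},\dots,x_{\sigma(n)};\bar y)$ saying that $k$ of the coordinates can be chosen independent over the rest together with~$\bar y$. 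One further application of part~(1) to $\rho_\sigma$ forces the remaining $n-k$ coordinates to avoid $\acl(\bar y)$, giving $D_{\varphi,k,\sigma}(\bar y)$, and one sets $D_{\varphi,k}:=\bigvee_{\sigma\in S_n} D_{\varphi,k,\sigma}$.

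Your compactness setup is valid, but the step you yourself flag as hard---showing that the intersection of the $H_{\Psi_0}$ stabilises---is exactly where you would end up reproducing the paper's direct construction anyway: your ``de-diagonalisation'' over choices of a $k$-element independent subtuple is nothing other than the disjunction over $\sigma$. The compactness wrapper therefore buys nothing; drop it and do the finite case analysis directly, which is the entire proof. Note also that the paper needs only the $H$ of part~(1), not the $d_{\varphi',j}$ of Fact~\ref{fact:1.1}, so even your sketch of the stabilisation step is more elaborate than required.
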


\begin{proof}
We use 1 to prove 2.
Certainly, we may assume that
 $k\leqslant |\bar{x}|=n$.
 For
 $\sigma\in S_n$
 and
  $\psi(x_{\sigma(1)}; x_{\sigma(2)},\dots, x_{\sigma(n)},\bar{y})=\varphi(\bar{x},\bar{y})$
 the assertion
$$\exists x_{\sigma(1)} \big[\varphi(\bar{x},\bar{y}) \wedge  x_{\sigma(1)}\notin \acl(x_{\sigma(2)},\dots, x_{\sigma(n)},\bar{y})\big]$$
is definable by a formula $H_{\psi}(x_{\sigma(2)};\dots,x_{\sigma(n)},\bar{y}).$ Now by applying $1$ to $H_{\psi}$ and iterating this procedure the following statement
\begin{align*}
\exists x_{\sigma(k)}\dots \exists x_{\sigma(1)}&\big[ \varphi(\bar{x},\bar{y})\wedge \bigwedge_{i=1}^{k}x_{\sigma(i)} \notin \acl(x_{\sigma(i+1)},\dots,x_{\sigma(k+1)},\dots,x_{\sigma(n)},\bar{y})\big]
\end{align*}
is definable by an
$\mathcal{L}$-formula
 $\rho(x_{\sigma(k+1)},\dots,x_{\sigma(n)};\bar{y}).$
 Informally $\rho$ states that
there  exists
a set
 $\{x_{\sigma(1)}, \dots,x_{\sigma(k)} \}\subseteq \{x_1,\dots,x_n\}$ which has dimension at least
 $k$
 over
 $\bar{y}$.
  Now by applying $1$ to $\rho$  we get that the statement
 $$\exists x_{\sigma(k+1)}\dots \exists x_{\sigma(n)}\big(\rho(x_{\sigma(k+1)},\dots,x_{\sigma(n)};\bar{y})\wedge \bigwedge_{i=k+1}^n x_{\sigma(i)}\notin \acl(\bar{y})\big)$$
is definable by an
$\mathcal{L}$-formula
$D_{\varphi,k,\sigma}(\bar{y})$.
Let $D_{\varphi,k}(\bar{y})=\bigvee_{\sigma\in S_n} D_{\varphi,k,\sigma}(\bar{y})$.
Note that any tuple $\bar{b}$ in $M$ satisfies
  $D_{\varphi,k}(\bar{y})$ if and only if there exists a tuple
 $\bar{a}$
which satisfies
 $\varphi(\bar{x},\bar{b})$,
 avoids
  $\acl(\bar{b})$
and  includes a subtuple
$(a_{i_1},\dots,a_{i_k})$
 with
 $\dim((a_{1},\dots,a_{n})/\bar{b})\geqslant \dim((a_{i_1},\dots,a_{i_k})/\bar{b})=k$. Therefore $D_{\varphi,k}(\bar{y})$ is the desired formula.
\end{proof}
\begin{fact}\label{f:2.3}\hfill
\begin{enumerate}
\item \textit{Symmetry}.
 $Y\forkindep_{X}^{\dim} Z$ if and only if  $Z\forkindep_{X}^{\dim} Y$.
\item
\textit{Transitivity.}
$Y\forkindep_{X}^{\dim}Z_{1}Z_{2}$ if and only if $Y\forkindep_{X}^{\dim}Z_{1}$ and  $Y\forkindep_{XZ_{1}}^{\dim}Z_{2}$.
\item\textit{$\acl$-Preservation.}
 $Y\forkindep_{X}^{\dim} Z$ if and only if   $\acl(Y)\forkindep_{\acl(X)}^{\dim} \acl(Z)$.
 \item\textit{$\dim$-Morley Sequences.}  Any order indiscernible sequence    $\{a_{i}:\hspace*{3mm}i\in I\}$ over $X$     is a $\dim$-Morley sequence, i.e. for any two disjoint subsequences $J_{1}$ and $J_{2}$ of $I$ we have $J_{1}\forkindep_{X}^{\dim}J_{2}$.
\end{enumerate}
\end{fact}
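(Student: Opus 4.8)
For the final statement (Fact~\ref{f:2.3}):

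The plan is to obtain all four items from the finite character, additivity and monotonicity clauses of Fact~\ref{fact:1.1}, reducing everything at the outset to finite tuples. The starting point is that, by finite character and monotonicity, $Y\forkindep_{X}^{\dim}Z$ holds precisely when $\dim(\bar{a}/X)=\dim(\bar{a}/X\bar{b})$ for all finite $\bar{a}$ from $Y$ and $\bar{b}$ from $Z$, since each such $\dim(\bar{a}/X\bar{b})$ is $\leqslant\dim(\bar{a}/X)$ while $\dim(\bar{a}/XZ)$ is the least of them. For \emph{Symmetry}, additivity applied in the two possible orders gives
\begin{align*}
\dim(\bar{a}/X)+\dim(\bar{b}/X\bar{a})=\dim(\bar{a}\bar{b}/X)=\dim(\bar{b}/X)+\dim(\bar{a}/X\bar{b}),
\end{align*}
so $\dim(\bar{a}/X)-\dim(\bar{a}/X\bar{b})=\dim(\bar{b}/X)-\dim(\bar{b}/X\bar{a})$; the left-hand side is identically zero exactly when $Y\forkindep_{X}^{\dim}Z$ and the right-hand side exactly when $Z\forkindep_{X}^{\dim}Y$, so the two assertions coincide. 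For \emph{Transitivity}, the monotonicity chain $\dim(\bar{a}/X)\geqslant\dim(\bar{a}/XZ_{1})\geqslant\dim(\bar{a}/XZ_{1}Z_{2})$ shows that $\dim(\bar{a}/X)=\dim(\bar{a}/XZ_{1}Z_{2})$ for every finite $\bar{a}$ from $Y$ is equivalent to having simultaneously $\dim(\bar{a}/X)=\dim(\bar{a}/XZ_{1})$ and $\dim(\bar{a}/XZ_{1})=\dim(\bar{a}/XZ_{1}Z_{2})$ for all such $\bar{a}$, i.e.\ to $Y\forkindep_{X}^{\dim}Z_{1}$ together with $Y\forkindep_{XZ_{1}}^{\dim}Z_{2}$.

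For \emph{$\acl$-Preservation} I would first record the standard fact that $\dim(\bar{a}/W)=\dim(\bar{a}/\acl(W))$ for all $\bar{a}$ and $W$ (an easy consequence of additivity and finite character), so that in particular $\dim(\bar{a}/X)=\dim(\bar{a}/\acl(X))$ and $\dim(\bar{a}/XZ)=\dim(\bar{a}/\acl(X)\cup\acl(Z))$. The direction $(\Leftarrow)$ is then immediate, applying the hypothesis to finite tuples $\bar{a}$ from $Y\subseteq\acl(Y)$. For $(\Rightarrow)$, let $\bar{a}'$ be a finite tuple from $\acl(Y)$ and, by finite character of $\acl$, choose a finite $\bar{a}$ from $Y$ with $\bar{a}'\in\acl(\bar{a})$; additivity gives $\dim(\bar{a}\bar{a}'/W)=\dim(\bar{a}/W)$ for $W\in\{X,XZ\}$, so $Y\forkindep_{X}^{\dim}Z$ yields $\dim(\bar{a}\bar{a}'/X)=\dim(\bar{a}\bar{a}'/XZ)$, and expanding both sides by additivity,
\begin{align*}
\dim(\bar{a}'/X)+\dim(\bar{a}/X\bar{a}')=\dim(\bar{a}'/XZ)+\dim(\bar{a}/XZ\bar{a}').
\end{align*}
Since monotonicity makes each summand on the right at most the corresponding one on the left, equality of the two sums forces equality term by term; in particular $\dim(\bar{a}'/X)=\dim(\bar{a}'/XZ)$, which after replacing sets by their algebraic closures is exactly what $\acl(Y)\forkindep_{\acl(X)}^{\dim}\acl(Z)$ requires.

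For the \emph{$\dim$-Morley} property, recall that the $a_{i}$ are single elements and $I$ is infinite, so $\dim(a_{i}/W)\in\{0,1\}$ for every $W$. If every $a_{i}$ lies in $\acl(X)$ the statement is trivial, so suppose not. I would first show that $\{a_{i}:i\in I\}$ is $\acl$-independent over $X$, i.e.\ $a_{i_{l}}\notin\acl(Xa_{i_{1}}\cdots a_{i_{l-1}})$ for all $i_{1}<\cdots<i_{l}$ in $I$: were this to fail for one increasing tuple it would fail for every one, by indiscernibility, and then, fixing $i_{1}<\cdots<i_{l-1}$, every $a_{j}$ with $j>i_{l-1}$ would lie in $\acl(Xa_{i_{1}}\cdots a_{i_{l-1}})$ and realize one and the same (algebraic) type over that set, which is impossible since the $a_{j}$ are infinitely many and distinct. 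Hence every finite subtuple of the sequence has $\dim$ over $X$ equal to its length, and for disjoint $J_{1},J_{2}\subseteq I$ with finite $F_{1}\subseteq J_{1}$, $F_{2}\subseteq J_{2}$, writing $\bar{a}_{F}$ for $(a_{i})_{i\in F}$, additivity gives
\[
\dim(\bar{a}_{F_{1}}/X\bar{a}_{F_{2}})=\dim(\bar{a}_{F_{1}\cup F_{2}}/X)-\dim(\bar{a}_{F_{2}}/X)=|F_{1}|=\dim(\bar{a}_{F_{1}}/X),
\]
which by finite character upgrades to $J_{1}\forkindep_{X}^{\dim}J_{2}$.

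The only genuinely non-routine point is the $\acl$-independence of the indiscernible sequence in item~(4), where indiscernibility and the infiniteness of $I$ are essential, together with the termwise-equality step in the forward direction of $\acl$-preservation; with these and the closure-insensitivity of $\dim$ in hand, each of the four items reduces to a short additivity/monotonicity computation.
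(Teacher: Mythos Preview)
The paper does not prove Fact~\ref{f:2.3}; it is stated as a standard fact about pregeometries and left without argument. There is therefore no proof in the paper to compare your approach against.

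Your proof is correct and supplies precisely the routine verifications the paper omits. Items (1)--(3) are handled cleanly via additivity and monotonicity, and the termwise-equality trick in the forward direction of $\acl$-preservation is the right move. For item~(4), two small remarks: first, your assertion that the $a_j$ are distinct tacitly uses the standard convention that an order-indiscernible sequence is non-constant---indeed the statement as written literally fails for a constant sequence with value outside $\acl(X)$, so the paper is implicitly making this assumption too, and once it is in place distinctness follows from indiscernibility. Second, your argument needs infinitely many indices $j>i_{l-1}$; if $I$ happens to have no infinite increasing tail, one applies exchange to see that (for minimal $l$) also $a_{i_1}\in\acl(Xa_{i_2}\cdots a_{i_l})$ and runs the same counting argument below $i_2$ instead. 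Both points are routine and do not affect the soundness of your approach.
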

We will need formulas that
are ``strong'' in the sense that they
carry
enough information about $\dim$.
These are characterized  by the following Definition.
\begin{dfn}
We say that an
$\mathcal{L}$-formula $\varphi(\bar{x},\bar{y})$ is {\em strong} with respect to
the distinct tuples $\bar{d},\bar{a}$ if
$\varphi(\bar{x},\bar{a})\in \qftp_{\mathcal{L}}(\bar{d}/\bar{a})$
and for each
$\bar{d}^{'}\bar{a}^{'}$  in a sufficiently saturated model $M\models T$,
\begin{enumerate}
\item   if $M\models \varphi(\bar{d}^{'},\bar{a}^{'})$ then $\bar{d}^{'}\bar{a}^{'}$
is distinct, and
\item  if  $M\models \varphi(\bar{d}^{'},\bar{a}^{'})$ and
$\dim(\bar{d}^{'}/\bar{a}^{'})=\dim(\bar{d}/\bar{a})$ then  for each  partition
$P=(\bar{x}_{1},\bar{x}_{2})$  of $\bar{x}$, $\dim(\bar{d}^{'}_{2}/\bar{d}^{'}_{1}\bar{a}^{'})\leqslant \dim(\bar{d}_{2}/\bar{d}_{1}\bar{a})$.
\end{enumerate}
\end{dfn}
\begin{rem}\label{rem:2.1.5}
The second item  above implies,  in particular, that
$\dim(\bar{d}^{'}_{1}/\bar{a}^{'})\geqslant \dim(\bar{d}_1/\bar{a})$.
\end{rem}
The $\bigvee$-Definability
(Fact \ref{fact:1.1}) implies that
a strong formula
for any pair of distinct sequence
always exists.
We shortly give a proof of this in the following,
where for a sequence of distinct variables $\bar{x}$ we denote by $\mathcal{P}$ the set of all partitions $P =(\bar{x}_1,\bar{x}_2)$ of $\bar{x}$.
\begin{lem}\hfill
\label{existance-of-strong}
\begin{enumerate}
\item
Suppose that $\bar{d},\bar{a}$ is a pair of distinct sequences. Then
there is
a strong formula $\varphi(\bar{x},\bar{y})$ with respect to $\bar{d},\bar{a}$.
\item
If
$\varphi(\bar{x},\bar{y})$ is strong with respect to $\bar{d},\bar{a}$,
$T\models   (\theta(\bar{x},\bar{y})\to \varphi(\bar{x},\bar{y}))$ and
$\theta(\bar{x},\bar{a})\in\qftp_{\mathcal{L}}(\bar{d}/\bar{a})$  then
$\theta(\bar{x},\bar{y})$ is also strong.
\end{enumerate}
\end{lem}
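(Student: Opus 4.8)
The plan is to prove part (1) by writing $\varphi$ as a finite conjunction of formulas each of which already lies in $\qftp_{\mathcal L}(\bar d/\bar a)$, and to obtain part (2) directly from the definition of ``strong''. The one conceptual point is that $\bigvee$-Definability (Fact~\ref{fact:1.1}) is precisely the device that converts a bound $\dim(\,\cdot\,/\,\cdot\,)\leqslant n$ into a first-order formula all of whose realizations respect that bound; since $\bar x$ has only finitely many partitions, we can afford one such conjunct per partition. This makes the whole lemma essentially a packaging statement, so I do not expect a genuine obstacle — the only thing that needs care is the bookkeeping of variables when the various conjuncts are reassembled into a single $\varphi(\bar x,\bar y)$.

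For (1): since $\bar d\bar a$ is a distinct tuple, the quantifier-free formula $\varphi_0(\bar x,\bar y)$ asserting pairwise distinctness of all coordinates of $\bar x\bar y$ lies in $\qftp_{\mathcal L}(\bar d/\bar a)$ and secures clause~(1) of the definition. Next, for each partition $P=(\bar x_1,\bar x_2)\in\mathcal P$ put $n_P=\dim(\bar d_2/\bar d_1\bar a)$; since $\dim(\bar d_2/\bar d_1\bar a)\leqslant n_P$, $\bigvee$-Definability supplies an $\mathcal L$-formula $\psi_P(\bar x_2;\bar x_1,\bar y)$ (quantifier-free, by elimination of quantifiers in $T$) with $\psi_P(\bar x_2;\bar d_1,\bar a)\in\qftp_{\mathcal L}(\bar d_2/\bar d_1\bar a)$ and such that $M\models\psi_P(\bar d'_2;\bar d'_1,\bar a')$ forces $\dim(\bar d'_2/\bar d'_1\bar a')\leqslant n_P$ in any sufficiently saturated $M\models T$. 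Then set
\[
\varphi(\bar x,\bar y)\ :=\ \varphi_0(\bar x,\bar y)\ \wedge\ \bigwedge_{P=(\bar x_1,\bar x_2)\in\mathcal P}\psi_P(\bar x_2;\bar x_1,\bar y).
\]
Each conjunct is satisfied by $(\bar d,\bar a)$, so $\varphi(\bar x,\bar a)\in\qftp_{\mathcal L}(\bar d/\bar a)$; clause~(1) holds because of $\varphi_0$; and if $M\models\varphi(\bar d',\bar a')$ then, for every $P$, the conjunct $\psi_P$ yields $\dim(\bar d'_2/\bar d'_1\bar a')\leqslant n_P=\dim(\bar d_2/\bar d_1\bar a)$, which is clause~(2) — in fact this conjunction enforces the inequality even without the hypothesis $\dim(\bar d'/\bar a')=\dim(\bar d/\bar a)$. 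The degenerate partitions with $\bar x_1$ or $\bar x_2$ empty are handled by the same argument, the relevant conditional dimension being $\dim(\bar d'/\bar a')$ or $0$, and the finiteness of $\mathcal P$ is what makes the displayed conjunction a legitimate first-order formula.

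For (2): assume $\varphi$ is strong with respect to $\bar d,\bar a$, $T\models(\theta(\bar x,\bar y)\to\varphi(\bar x,\bar y))$, and $\theta(\bar x,\bar a)\in\qftp_{\mathcal L}(\bar d/\bar a)$. In any sufficiently saturated $M\models T$, a realization of $\theta(\bar d',\bar a')$ is a realization of $\varphi(\bar d',\bar a')$; hence clause~(1) for $\theta$ follows from clause~(1) for $\varphi$, and if moreover $\dim(\bar d'/\bar a')=\dim(\bar d/\bar a)$ then strongness of $\varphi$ gives $\dim(\bar d'_2/\bar d'_1\bar a')\leqslant\dim(\bar d_2/\bar d_1\bar a)$ for every partition $P$. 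Together with the hypothesis $\theta(\bar x,\bar a)\in\qftp_{\mathcal L}(\bar d/\bar a)$, this is exactly the statement that $\theta$ is strong with respect to $\bar d,\bar a$, completing the plan.
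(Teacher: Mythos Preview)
Your proof is correct and follows essentially the same approach as the paper: for part (1), both take the conjunction of the distinctness formula with, for each partition $P$ of $\bar x$, a formula supplied by $\bigvee$-Definability bounding $\dim(\bar d_2'/\bar d_1'\bar a')$; part (2) is dismissed as clear in the paper and your verification from the definition is exactly what is intended.
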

\begin{proof}
We prove only item 1, and 2 would be clear.
For  each partition
$P=(\bar{x}_{1},\bar{x}_{2})$ of $\bar{x}$, by the $\bigvee$-Definability
(Fact \ref{fact:1.1})
there is a formula
$\varphi_P(\bar{x}_2,\bar{x}_1,\bar{y})$
such that
$\varphi_{P}(\bar{x}_{2},\bar{d}_{1},\bar{a})\in \qftp_{\mathcal{L}}(\bar{d}_{2}/\bar{d}_{1}\bar{a})$ and if
$M\models \varphi_{P}(\bar{d}^{'}_{2},\bar{d}^{'}_{1},\bar{a}^{'})$ then
$\dim(\bar{d}^{'}_{2}/\bar{d}^{'}_{1}\bar{a}^{'})\leqslant \dim(\bar{d}_{2}/\bar{d}_{1}\bar{a})$.
Now we simply let $\varphi(\bar{x},\bar{y})$
be the conjunction of all formulas $\varphi_P$ with the formula that states
$\bar{x}$ and $\bar{y}$ are distinct.
\end{proof}
In the next sections, we deal with
a class of bi-colored expansions of models of $T$. For this class to have
certain desired features,
we need $T$ to satisfy yet two other properties,
that is the free-amalgamation, and the
indecomposability,
 as introduced in the following.
These features will later  lead us to an axiomatizations for the ``rich" structures.\par
Recall that an $\mathcal{L}$-embedding $f:M\to N$ is {\em algebraically closed} if $\acl(f(M))=f(M)$.
\begin{dfn}
\label{free-amalgamation-property}
We say that
$T$
has the \textit{free-amalgamation property} over algebraically closed substructures  if
for each
$M_{0}, M_{1}, M_{2} \models T^{\forall}$
and each pair of algebraically closed
embeddings
$f_{1}:M_{0}\to  M_{1}$,
$f_{2}: M_{0}\to M_{2}$, there are
$ M\models T$
and
embeddings
$g_{1}: M_{1}\to M$
,
$g_{2}: M_{2}\to  M$
such that
\begin{enumerate}
\item
$g_1\circ f_1=g_2\circ f_2$, and
\item
$g_{1}(M_{1})$ and $g_{2}(M_{2})$ are free
over $g_{1}\circ f_{1}(M_{0})$ in $M$.
\end{enumerate}
We call $M$ a \textit{free amalgam} of $M_{1}$ and
$M_{2}$ over $M_{0}$, and although $M$ is not unique up to isomorphism and by abuse of notation, denote it by $M_{1}\oplus_{M_{0}} M_{2}$.
\end{dfn}
\begin{obser}\label{obs:1.7}
Let $T$ have the free-amalgamation  property and $M$ be a
 $\lambda$-saturated model of
 $T$. Suppose that
 $\Sigma(\bar{x})$ is a partial type over $X$
which has a solution
with no intersection with
 $\acl(X)$.
Then for each $Y\supseteq X$ with $|Y|<\lambda$
there is  a solution
 $\bar{d}^{'}$  to $\Sigma$ in
 $M$ such that
  $\bar{d}^{'}\cap Y=\emptyset$. Furthermore $\bar{d}^{'}$ can be chosen in such that $X {\bar{d}^{'}}$ and $Y$ are free  over $X$.
  \end{obser}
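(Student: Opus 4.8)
The plan is to realize the entire configuration inside one free amalgam and then transport it into $M$ by a saturation argument. First I would fix a realization $\bar d$ of $\Sigma$, in some model $N\models T$ containing $X$, with $\bar d\cap\acl(X)=\emptyset$ (this exists by hypothesis). Setting $M_{0}:=\acl(X)$, $M_{1}:=\acl(X\bar d)$ (computed in $N$) and $M_{2}:=\acl(Y)$ (computed in $M$), all three are models of $T^{\forall}$, and the inclusions $f_{1}\colon M_{0}\hookrightarrow M_{1}$, $f_{2}\colon M_{0}\hookrightarrow M_{2}$ are algebraically closed embeddings since $\acl(M_{0})=M_{0}$. Applying the free-amalgamation property (Definition~\ref{free-amalgamation-property}) gives a model $M^{*}\models T$ together with embeddings $g_{1}\colon M_{1}\to M^{*}$, $g_{2}\colon M_{2}\to M^{*}$ satisfying $g_{1}\circ f_{1}=g_{2}\circ f_{2}$, such that $g_{1}(M_{1})$ and $g_{2}(M_{2})$ are free over the common image $g_{1}(M_{0})=g_{2}(M_{0})$. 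Write $\bar d^{*}:=g_{1}(\bar d)$, $X^{*}:=g_{1}(X)=g_{2}(X)$ and $Y^{*}:=g_{2}(Y)$.

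Inside $M^{*}$ one then reads off the three required facts. As $g_{1}$ and $g_{2}$ agree on $X$, the tuple $\bar d^{*}$ satisfies the image of $\Sigma$ over $X^{*}$. Freeness gives $g_{1}(M_{1})\cap g_{2}(M_{2})=g_{1}(M_{0})$, while $\bar d^{*}\cap g_{1}(M_{0})=g_{1}(\bar d\cap\acl(X))=\emptyset$; hence $\bar d^{*}\cap Y^{*}=\emptyset$, and since $X^{*}\subseteq Y^{*}$ also $X^{*}\bar d^{*}\cap Y^{*}=X^{*}$. For independence, $g_{1}(M_{1})\forkindep_{g_{1}(M_{0})}^{\dim}g_{2}(M_{2})$; because $g_{1}$ preserves algebraicity of elements ($T$ eliminating quantifiers), $g_{1}(M_{0})=g_{1}(\acl(X))\subseteq\acl(X^{*})$, so $\acl(g_{1}(M_{0}))=\acl(X^{*})$, and by the $\acl$-preservation of $\dim$-independence (Fact~\ref{f:2.3}) this is equivalent to $g_{1}(M_{1})\forkindep_{X^{*}}^{\dim}g_{2}(M_{2})$; monotonicity of $\dim$-independence in its two side arguments then yields $X^{*}\bar d^{*}\forkindep_{X^{*}}^{\dim}Y^{*}$. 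Thus in $M^{*}$ the tuple $\bar d^{*}$ realizes $\Sigma$, is disjoint from $Y^{*}$, and $X^{*}\bar d^{*}$, $Y^{*}$ are free over $X^{*}$.

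To finish, I would transport this back into $M$. The restriction of $g_{2}$ is an $\mathcal{L}$-isomorphism of $Y$ onto $Y^{*}$, so, using quantifier elimination in $T$, $\tp^{M^{*}}(Y^{*})$ matches $\tp^{M}(Y)$; let $p(\bar x)$ be the complete $\mathcal{L}$-type over $Y$ corresponding under this identification to $\tp^{M^{*}}(\bar d^{*}/Y^{*})$. Then $p$ is consistent with the elementary diagram of $M$, so by $\lambda$-saturation of $M$ and $|Y|<\lambda$ it is realized by some $\bar d'\in M$. Since $p$ contains the image of $\Sigma$ (over $X\subseteq Y$) and all inequalities $x_{i}\neq y$ with $y\in Y$, one gets $\bar d'\models\Sigma$, $\bar d'\cap Y=\emptyset$, and $X\bar d'\cap Y=X$. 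Moreover, for any subtuple $\bar a$ of $\bar d'$ and any finite $\bar b$ from $Y$ the value $\dim(\bar a/\bar b)$ is determined by $\tp(\bar a/\bar b)\subseteq p$ (via the definability properties of Fact~\ref{fact:1.1}); by finite character this pins down $\dim(\bar a/Y)$ and $\dim(\bar a/X)$ from $p$, so they equal the corresponding values of $\bar d^{*}$ over $Y^{*}$ and $X^{*}$, which coincide by the previous paragraph. Using additivity of $\dim$ to reduce arbitrary tuples from $X\bar d'$ to subtuples of $\bar d'$, this gives $X\bar d'\forkindep_{X}^{\dim}Y$, so $X\bar d'$ and $Y$ are free over $X$, as required.

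The step I expect to be delicate is this last transfer: one must recognise that both ``$\bar d'\cap Y=\emptyset$'' and ``$X\bar d'$ is free from $Y$ over $X$'' are determined by $\tp(\bar d'/Y)$ alone --- the first trivially, the second through the definability of dimension together with finite character --- so that it is enough to realise in $M$ the type carried over from $M^{*}$. Everything else is bookkeeping with the identifications produced by the free amalgam; note that the hypothesis $\bar d\cap\acl(X)=\emptyset$ is used precisely to force $\bar d^{*}$ disjoint from the whole of $g_{2}(M_{2})\supseteq Y^{*}$, not merely from $X^{*}$.
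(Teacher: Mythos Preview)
Your proof is correct and follows essentially the same approach as the paper: form a free amalgam over $\acl(X)$ to obtain a realization of $\Sigma$ that is free from (and disjoint from) $Y$, then use $\lambda$-saturation to transport this realization into $M$. The paper's version is terser, amalgamating $M$ with itself over $\acl(X)$ rather than the smaller pieces $\acl(X\bar d)$ and $\acl(Y)$ you chose, but this is a cosmetic difference; your careful verification that disjointness and freeness are both encoded in the type over $Y$ (via the definability of dimension and finite character) spells out precisely what the paper leaves implicit.
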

\begin{proof}
There is a solution $\bar{d}$ in
$M\oplus_{\acl(X)}M$ which satisfies the conditions required. By saturation we can find a solution $\bar{d}^{'}$ in $M$ such that  $X\cup {\bar{d}^{'}}$ and $Y$ are free  over $X$.
\end{proof}

\begin{dfn}
We call $T$ \textit{indecomposable}
 if
no
finite-dimensional algebraically closed set $X$
can be written as
a finite union $X=Y_1\ldots Y_n$
with $\dim (Y_i)<\dim(X)$ for $i\leqslant n$.
\end{dfn}
The assumption of indecomposability
is meant to provide
the following desirable property for bases.
\begin{lem}\label{lem:1.8}
Assume that $T$ is indecomposable and
$M\models T$. Let $B=\{d_1,\dots,d_m\}$ be an independent set over $A\subseteq M$. Then for
 each natural number $n$ there is a subset
$D\subseteq \acl(Ad_1,\dots,d_m)$
with
$|D|=n$
such that each $m$-element subset of $BD$
is a base for $\acl(Ad_1,\dots,d_m)$.
\end{lem}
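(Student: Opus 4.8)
The plan is to reduce to the case where $A$ is finite, so that $X:=\acl(Ad_1,\dots,d_m)$ is a finite-dimensional algebraically closed set, and then to build $D$ one element at a time, using indecomposability at each stage to produce a fresh point that is not swallowed by any of the finitely many ``flats'' spanned by the $(m-1)$-element subsets of the configuration constructed so far. The first thing to record is the elementary equivalence: for an $m$-element subset $S$ of $X$, being a base of $X$ over $A$ is the same as being $\dim$-independent over $A$. Indeed, if $\dim(S/A)=m$ then, since $\dim(X/A)=m$ and $S\subseteq X$, additivity gives $\dim(X/AS)=0$, whence $\acl(AS)=\acl(AX)=X$. So it suffices to find $D\subseteq X$ with $|D|=n$ such that every $m$-element subset of $BD$ is independent over $A$.

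Next comes the reduction to finite $A$. Choose any finite $A_0\subseteq A$; by monotonicity $\dim(B/A_0)=m$ automatically, so $B$ is independent over $A_0$. Suppose we manage to find $D\subseteq\acl(A_0d_1,\dots,d_m)$ with $|D|=n$ and every $m$-element subset of $BD$ independent over $A_0$. For such a subset $S$, the observation above (applied over $A_0$) gives $\acl(A_0S)=\acl(A_0d_1,\dots,d_m)\ni d_1,\dots,d_m$, so $\dim(B/AS)=0$ and hence $m=\dim(B/A)\le\dim(S/A)\le m$; thus $S$ is independent over $A$, and $\acl(AS)\supseteq\acl(A_0S)\ni B$ forces $\acl(AS)=\acl(AB)=X$. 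So the same $D$ works over $A$, and we may replace $A$ by $A_0$; since $\acl(A_0)$ is finite-dimensional, $X$ is now a finite-dimensional algebraically closed set, of dimension $d=\dim(A)+m$. For the construction itself I would pick $e_1,\dots,e_n\in X$ so that after step $k$ every $m$-element subset of $B_k:=B\cup\{e_1,\dots,e_k\}$ is independent over $A$ (true at $k=0$ by hypothesis). Given $B_k$, every $(m-1)$-element subset $S\subseteq B_k$ is independent over $A$, being contained in an $m$-element subset of $B_k$, so $\acl(AS)$ is an algebraically closed subset of $X$ of dimension $d-1<d$. There are only finitely many such $S$, so by indecomposability $X$ is not the union of the corresponding sets $\acl(AS)$, and we may choose $e_{k+1}\in X$ outside all of them. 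Then $\dim(e_{k+1}/AS)=1$ for each such $S$, so $\dim(S\cup\{e_{k+1}\}/A)=m$; and $e_{k+1}\notin B_k$ — automatically when $m\ge 2$, since each element of $B_k$ lies in some $\acl(AS)$, and for $m=1$ one simply adjoins $\acl(A)$ and the finitely many relevant singletons to the list of sets to be avoided — so every $m$-element subset of $B_{k+1}$ is independent over $A$. After $n$ steps, $D:=\{e_1,\dots,e_n\}$ is as required.

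The heart of the argument, and the only point where something could fail, is the step invoking indecomposability: one needs $X$ not to be exhausted by the finitely many lower-dimensional algebraically closed sets spanned by the $(m-1)$-element subsets of the current configuration, which is exactly the indecomposability hypothesis. The reduction to finite $A$ is included precisely so that the ``finite-dimensional'' clause in the definition of indecomposability is applicable to $X$; everything else is routine bookkeeping with additivity of $\dim$ and the exchange property.
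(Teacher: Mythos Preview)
Your argument is correct and follows the same inductive scheme as the paper: at each step, use indecomposability to choose a new element outside the finitely many $\acl$-spans of the $(m-1)$-element subsets of the current configuration, and invoke exchange to conclude that every $m$-element subset remains a base. The one addition you make---reducing first to a finite $A_0\subseteq A$ so that $X=\acl(A_0d_1,\dots,d_m)$ is genuinely finite-dimensional before invoking indecomposability---is a point the paper's proof glosses over (it applies indecomposability directly to $\acl(Ad_1,\dots,d_m)$ without commenting on $\dim(A)$), and your treatment of the $m=1$ edge case is likewise more careful.
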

\begin{proof}
By  assumption
$\dim(d_{1},\dots,d_{m}/A)=m$. Let
$S_{k}$ be the group of permutations of
$\{1,\dots,k\}$. Since
$T$ is indecomposable, the set
\begin{align*}
D_{m+1}:=\acl(Ad_{1},\dots,d_{m})\setminus\bigcup_{i=1}^{m}\acl(Ad_{1},\dots,\hat{d_{i}},\dots,d_{m})
\end{align*}
is non-empty. Let
$d_{m+1}\in D_{m+1}$ be arbitrary. By the exchange property,  the set
$\{d_{\sigma(1)},\dots,d_{\sigma(m)}\}$
is a base for
$\{d_{1},\dots,d_{m}, d_{m+1}\}$ over $A$ for any
$\sigma \in S_{m+1}$.
\par
Again by indecomposability of
$T$, the set
\begin{align*}
D_{m+2}:=\acl(Ad_{1},\dots,d_{m+1})\setminus\bigcup_{1\leqslant i< j\leqslant m+1}\acl(Ad_{1},\dots,\hat{d_{i}},\dots,\hat{d_{j}},\dots,d_{m+1})
\end{align*}
is non-empty. Now let
$d_{m+2}$ be an arbitrary element of
$D_{m+2}$. Again by the exchange property the set
$\{d_{\sigma(1)},\dots,d_{\sigma(m)}\}$
is base for
$\{d_{1},\dots, d_{m+2}\}$ over $A$ for any
$\sigma \in S_{m+2}$.
\par
Iterating this process we obtain the set $D=\{d_{m+1},\dots,d_{m+n}\}$ as desired.
\end{proof}
Examples of indecomposable geometric theories with free-amalgamation include
strongly minimal expansions of  divisible abelian groups,
o-minimal expansions of  ordered divisible abelian groups (ODAG),
any completion of the theory of algebraically closed valued fields, and
the theory of $p$-adic fields.
The indecomposability of the mentioned  theories indeed stems in the fact  that no vector space over an infinite field can be written as a finite  union of its proper subvector spaces.
\subsection{Bi-colored Expansions}
From here onward we assume that
$T$
is an indecomposable  geometric theory  satisfying the free-amalgamation property. Before proceeding, we need
to fix some more notation and conventions.
\paragraph{Notation and Conventions.}
We expand
$\mathcal{L}$
to
$\mathcal{L}_{p}=\mathcal{L}\cup \{p\}$
by adding a unary predicate
$p$
called the \textit{coloring predicate}.
We denote by
$\mathcal{K}$
the class of all
$\mathcal{L}_{p}$-structures
whose universe
$M$ is a model of $T^{\forall}$.
For
$X\subseteq M$,
by
$\langle X \rangle_{M}$, or $\langle X\rangle$
we denote the
$\mathcal{L}_{p}$-structure generated by
$X$
in
$M$.
In this sense  we call
$M$
\textit{finitely generated} if there is a finite set
$A\subseteq M$
such that
$M=\langle A\rangle$. For technical convenience, we take
$\emptyset$
also as a finitely generated structure in
 $ \mathcal{K}$.
For the ease of reference, we say
$x$ \textit{is colored}
if
$M\models p(x)$. We would simply
write $p(x)$ instead of $M\models p(x)$
when the meaning is clear from the context. We write
$p(\bar{x})$
instead of
$\bigwedge_{i=1}^{n}p(x_{i})$,
when $\bar{x}=(x_1,\ldots,x_n)$.
Also by
$p(X/Y)$
we mean the set of colored elements of
$X\setminus Y$.
Finally we fix a real number
$0<\alpha \leqslant 1$ for the rest of the paper. We will occasionally add the assumption that
$\alpha$ is/is not rational.
The $\mathcal{L}$ and $\mathcal{L}_p$-algebraic closures are respectively denoted by $\acl$ and $\Acl$.
\begin{dfn}
For
$M\in \mathcal{K}$
and a finite subset
$A$ of
$M$, define
\begin{align*}
\delta_{\alpha}(A)=\dim(A)-\alpha|p(A)|.
\end{align*}
More generally for
$A\subseteq_{\text{fin}}M$
and
$X\subseteq M$
we define
$\delta_{\alpha}(A/X)=\dim(A/X)-\alpha |p(A/X)|$. The function $\delta_{\alpha}$ is called  a  \textit{pre-dimension} map.
\end{dfn}
The assumption of quantifier elimination makes
$\delta_\alpha(A)$ independent from the ambient $M$.
It is also routine to check that
\begin{align*}
\delta_{\alpha}(AB/C)=\delta_{\alpha}(A/BC)+\delta_{\alpha}(B/C),
\end{align*}
and that $\delta_a$ is submodular, that is
\[\delta_{\alpha}(AB)+\delta_{\alpha}(A\cap B)\leqslant \delta_{\alpha}(A)+\delta_{\alpha}(B).\]
\paragraph{More on Notation.}
 Denote by
$\mathcal{K}_{\alpha}^{+}$
the subclass
of
$\mathcal{K}$
consisting of the
$\mathcal{L}_{p}$-structures
$M\in\mathcal{K}_{\alpha}^{+}$
such that
$\delta_{\alpha}(A)\geqslant 0$
for all
$A\subseteq_{\text{fin}}M$.
When $A$ is a subset of a structure
$M$ in the $\mathcal{K}^+_\alpha$,
we simply say that $A$ is in $\mathcal{K}^+_\alpha$. Furthermore in this situation for $N\in \mathcal{K}_{\alpha}^{+}$
we say that
$f: A \to N$
is an $\mathcal{L}_{p}$-embedding if  $\qftp_{\mathcal{L}} (f(A))=\qftp_{\mathcal{L}}(A)$ and $f(A)$   has the same color in $N$ as $A$ in $M$.
Finally, to avoid repeating
 the assumption that a structure is in
$\mathcal{K}^+_\alpha$ in each statement,
we simply assume that each structure and set in the following
 belongs to
$\mathcal{K}^+_\alpha$ unless we emphasize otherwise.
\par
It is clear by our previous conventions that $\emptyset\in \mathcal{K}^+_\alpha$. Also note that by  $\bigvee$-Definability  (Fact
\ref{fact:1.1}),
$\mathcal{K}_{\alpha}^{+}$
is axiomatizable in
$\mathcal{L}_{p}$ by the theory
consisting of the $\mathcal{L}_{p}$-sentences
\begin{align*}
 \neg \exists x_1,\ldots, x_n\big(\psi_l(x_1,\dots,x_n)\wedge \bigvee _{\substack{Y\subseteq \{x_1,\dots,x_n\} \\
|Y|\geq k}} p(Y)\big)
\end{align*}
 where
 $ \psi_l $ asserts that the dimension of $(x_1,\ldots,x_n)$ is bounded by $l$,
and  $l<\alpha k$.
\par
Notice that since $\alpha>0$ all algebraic points over $\emptyset$ are non-colored.
\par
We need to review some basic concepts
concerning the properties of the
pre-dimension function $\delta_{\alpha}$.
\begin{dfn}\hfill
\label{212}
\begin{enumerate}
\item
For
$A\subseteq_{\text{fin}}M$
and
$X\subseteq M$
we say that
$A$
is
closed in
$X$
and write
$A\leqslant_{\alpha}X$
if
$A\subseteq X$
and
$\delta_{\alpha}(B/A)\geqslant 0$
for all
$A\subseteq B\subseteq_{\text{fin}}X$.
\item
For
$X, Y$
arbitrary subsets of
$M$,
we say $X$ is closed in $Y$ and write
$X\leqslant _{\alpha} Y$
if
$X\subseteq Y$
and
$\delta_{\alpha}(A/X) \geqslant 0$
for all
$A\subseteq_{\text{fin}}Y$.
\item
For
structures $M,N$
we
say that $M$ is closed in $N$ and
 write
$M\leqslant_{\alpha} N$ if
$M$ is a substructure of $N$ and
$M\leqslant_\alpha N$ as sets in the sense above.
\end{enumerate}
\end{dfn}
For simplicity, we omit the subscript
$\alpha$
from
$\leqslant_{\alpha}$
and
$\delta_{\alpha}$.
It is easy to see that
if
$X\leqslant M$,
then
$\neg p(x)$
for all
$x\in \acl_{M}(X)\setminus X$. Also in this case, both
$ \langle X\rangle_M$
and
$\acl_{M}(X)$
are closed in
$M$.
\begin{rem}\label{mol:5}
$(\mathcal{K}_{\alpha}^{+}, \leqslant)$
is a so-called smooth class, that is for all
$M,M_1,M_2,X$,
\begin{enumerate}
\item
$\emptyset,M\leqslant M$.
\item
If
$M\leqslant M_{1}$
and
$M_{1}\leqslant M_{2}$, then
$M\leqslant M_{2}$.
\item
If
$M\leqslant M_2$ then
$M\leqslant M_{1}$ for
all
$M\subseteq M_{1}\subseteq M_2$.
\item
if $M\leqslant M_1$ then
$M\cap X\leqslant X$
for all
$X\subseteq M_1$.
\end{enumerate}
\end{rem}
Note also that
items 2 and 4 above easily imply that
if $M_1,M_2\leqslant M$
 then
$M_1\cap M_{2}\leqslant M$.
\begin{dfn}
Assuming that
$X,A,B\subseteq M$,
\begin{enumerate}
\item
The
\textit{closure} of
$X$ in
$M$, denoted by $\cl_{M}(X)$,
is
the smallest subset $Y$ of $M$
containing
$X$ such that $Y\leqslant M$.
\item
 We say that
 $B$ is an \textit{intrinsic extension} of
 $A$  and
 write
 $A\leqslant_{i} B$ if
 $A\subseteq B$
 but there is no
 $A^{'}\neq B$
with
$A\subseteq  A^{'}\leqslant B$,
equivalently, $\delta(B)\lneqq\delta(A')$
for all $A\subseteq A'\varsubsetneqq B$.
\item A pair
$(A, B)$
is called
\textit{minimal} if
$A\subseteq B$,
$A\nleqslant B$
but
$A\leqslant C$
for all
$A\subseteq C\subsetneqq B$.
\end{enumerate}
It is clear that if $(A,B)$ is a minimal pair, then
$ A\leqslant_i B $. Moreover if $ B $ is an intrinsic extension of
$ A $ then it is possible to find a tower
$ B_0=A\subseteq B_1\subseteq \ldots\subseteq B_n=B $ where each
$ (B_i,B_{i+1}) $ is minimal.
\end{dfn}
The following statements are well-known facts about basic properties of $\cl_M$.
\begin{fact}\hfill
\begin{enumerate}
\item
 $\cl_{M}(X)$
is the intersection of all closed subsets of
$ M$ that contain $X$.
\item
 $\cl_{M}(X)=\bigcup_{A\subseteq  X} \cl_{M}(A)$.

\item   $\cl_M(A)=\bigcup \{B\subseteq M: A\leqslant_i B\}$.
\end{enumerate}
\end{fact}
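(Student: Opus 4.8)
The plan is to reduce all three items to a single \emph{intersection lemma}: every intersection of closed subsets of $M$ is again closed in $M$. Once this is available, item~1 --- and the very well-definedness of $\cl_M$ --- falls out at once: $M\leqslant M$ by Remark~\ref{mol:5}, so the closed supersets of $X$ form a nonempty family, their intersection is closed by the lemma and contains $X$, and it is contained in each member of the family, hence it is the least closed set containing $X$, i.e.\ $\cl_M(X)$. I would record here two easy consequences needed afterwards: $\cl_M$ is monotone (if $X_1\subseteq X_2$ then $\cl_M(X_2)$ is a closed superset of $X_1$, so $\cl_M(X_1)\subseteq\cl_M(X_2)$), and $\cl_M(X)$ is a substructure of $M$ (from $\cl_M(X)\leqslant M$ the structure $\langle\cl_M(X)\rangle_M$ is closed in $M$, so minimality forces $\cl_M(X)=\langle\cl_M(X)\rangle_M$). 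Throughout I use freely that the smooth-class clauses of Remark~\ref{mol:5} hold, with the same proofs, for arbitrary subsets, not just substructures.

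To prove the intersection lemma, I would first note that the two-set case is exactly the observation recorded just after Remark~\ref{mol:5}, so by induction every \emph{finite} intersection of closed sets is closed. Now take $Y=\bigcap_{i\in I}Y_i$ with each $Y_i\leqslant M$ and a finite $A\subseteq M$; the key device is to split off $A_0:=A\setminus Y$. Since $A\cap Y\subseteq Y$, additivity of $\delta$ gives $\delta(A/Y)=\delta(A_0/Y)$, and since $A_0$ is disjoint from $Y$ the colours $p(A_0/Y)$ are simply the colours of $A_0$, with no dependence on the base. As $A_0$ is finite, I can pick for each $a\in A_0$ an index $i_a$ with $a\notin Y_{i_a}$, put $J=\{i_a:a\in A_0\}$ and $Y_J=\bigcap_{i\in J}Y_i$; then $Y_J\leqslant M$ (finite intersection), $A_0\cap Y_J=\emptyset$, and $Y\subseteq Y_J$, so by monotonicity of $\dim$ and then $Y_J\leqslant M$,
\begin{align*}
\delta(A/Y)=\delta(A_0/Y)&=\dim(A_0/Y)-\alpha\,|p(A_0)|\\
&\geqslant\dim(A_0/Y_J)-\alpha\,|p(A_0)|=\delta(A_0/Y_J)\geqslant0.
\end{align*}
As $A$ was arbitrary, $Y\leqslant M$.

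For item~2 I would use that $\{\cl_M(A):A\subseteq X\text{ finite}\}$ is directed (by monotonicity of $\cl_M$, $\cl_M(A_1),\cl_M(A_2)\subseteq\cl_M(A_1\cup A_2)$), so its union $Y$ is a directed union of closed sets, contains $X$, and satisfies $Y\subseteq\cl_M(X)$ (monotonicity again); it then suffices to check $Y\leqslant M$, for then $Y$ is a closed superset of $X$ and $\cl_M(X)\subseteq Y$. Given a finite $A\subseteq M$, I would pick (finite character of $\dim$, Fact~\ref{fact:1.1}) a finite $C\subseteq Y$ with $\dim(A/C)=\dim(A/Y)$ and then, by directedness, a single $\cl_M(A')$ with $A'\subseteq X$ finite containing $C$ and $A\cap Y$; since $\cl_M(A')\subseteq Y$ this gives $\dim(A/\cl_M(A'))=\dim(A/Y)$ and $A\cap\cl_M(A')=A\cap Y$, hence $\delta(A/Y)=\delta(A/\cl_M(A'))\geqslant0$. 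For item~3, fix a finite $A$. If $A\leqslant_i B$, then $\cl_M(A)\cap B\leqslant B$ by clause~4 of Remark~\ref{mol:5} (applied with $\cl_M(A)$ in the role of the small structure), and since $A\subseteq\cl_M(A)\cap B\subseteq B$ the definition of $\leqslant_i$ forces $\cl_M(A)\cap B=B$, i.e.\ $B\subseteq\cl_M(A)$. Conversely $A\leqslant_i\cl_M(A)$: any $A'$ with $A\subseteq A'\leqslant\cl_M(A)$ satisfies $A'\leqslant M$ by transitivity, so $\cl_M(A)\subseteq A'$ and thus $A'=\cl_M(A)$; and since $\cl_M(A)$ is finite for finite $A$ (the usual finiteness of closures of finite sets in this setting), $\cl_M(A)$ is itself a term of the union, giving $\cl_M(A)=\bigcup\{B:A\leqslant_i B\}$.

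The genuinely non-formal step is the intersection lemma, and the idea carrying it is the peeling of $A_0=A\setminus Y$: once the colour-count is frozen (because $A_0$ is disjoint from the base), $\delta(A/\cdot)$ can only increase as the base shrinks from a finite sub-intersection $Y_J$ down to $Y$, which is exactly what promotes the finitary two-set statement to an arbitrary intersection. I expect the remaining friction to be merely cosmetic: making sure Remark~\ref{mol:5} is invoked only in forms valid for subsets (which is everywhere it is needed, since closures turn out to be substructures), and, in item~3, being explicit that finite sets have finite closures.
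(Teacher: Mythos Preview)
Your intersection lemma and its application to items~1 and~2 are correct and cleanly done; the ``peeling off $A_0=A\setminus Y$'' device is exactly the right idea, and your directed-union argument for item~2 is sound.

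There is, however, a genuine gap in item~3. You write ``since $\cl_M(A)$ is finite for finite $A$ (the usual finiteness of closures of finite sets in this setting), $\cl_M(A)$ is itself a term of the union''. This is false in the present setting when $\alpha$ is irrational: the paper records explicitly in Remark~\ref{rem:2.17}(3)--(4) that $\cl_M(A)$ is finite only when $\alpha$ is rational, and is merely countable otherwise. So you cannot exhibit $\cl_M(A)$ as a single finite $B$ with $A\leqslant_i B$, and the converse inclusion $\cl_M(A)\subseteq\bigcup\{B:A\leqslant_i B\}$ needs a different argument.

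The fix is to show directly that $Y:=\bigcup\{B\subseteq_{\text{fin}}M:A\leqslant_i B\}$ is closed in $M$. First observe that the family is closed under finite unions: if $A\leqslant_i B_1$ and $A\leqslant_i B_2$, and $A\subseteq A'\leqslant B_1\cup B_2$, then by Remark~\ref{mol:5}(4) one has $A'\cap B_j\leqslant B_j$ with $A\subseteq A'\cap B_j$, so $A'\cap B_j=B_j$ for $j=1,2$, forcing $A'=B_1\cup B_2$. Now given finite $D\subseteq M$, use finite character of $\dim$ to find a finite $E\subseteq Y$ with $\dim(D/E)=\dim(D/Y)$; enlarge $E$ so that $(D\cap Y)\cup A\subseteq E$, and by directedness take $B$ with $A\leqslant_i B$ and $E\subseteq B$. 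Let $B^*=\cl_{B\cup D}(A)$; then $A\leqslant_i B^*$ (so $B^*\subseteq Y$), $B\subseteq B^*$ by the union-closure just proved, and $B^*\leqslant B\cup D$, so $\delta(D/B^*)\geqslant0$. Since $D\cap Y\subseteq B^*\subseteq Y$ one has $D\setminus B^*=D\setminus Y$, and since $E\subseteq B^*\subseteq Y$ one has $\dim(D/B^*)=\dim(D/Y)$; hence $\delta(D/Y)=\delta(D/B^*)\geqslant0$. This closes the gap.
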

When $M$ is clear from the context,  we write
$\cl(X)$ instead of $\cl_M(X)$.
\begin{dfn}
For
$A\subseteq  M$ and natural number $n$,
we define $\cl^{n}_{M}(A)$ as the union of all $B\subseteq M$  such that $A\leqslant_i B$ and $|B-A|<n$.
\end{dfn}
\begin{rem}\label{rem:2.17}
Let $ B, M\in \mathcal{K}_{\alpha}^+$ and $A\subseteq M$ . Then
 \begin{enumerate}
\item
  There is
$f:\mathbb{N} \times  \mathbb{N} \to \mathbb{N}$ such that for any embedding $g:B\to M$
the size of
$\cl^{n}_{M}(g(B))$ is  bounded by
$f(|B|,n)$.
\item
$\cl_{M}(A)\subseteq \Acl_{M}(A)$.
\item
 When $\alpha$ is rational, then $\cl_M(A)$ is finite; hence $\cl_M(A)=\cl_M^n(A)$ for some $n$.
\item
  By item 1,  $\cl_M(A)$ is countable when $\alpha$ is irrational.

\end{enumerate}
\end{rem}
 The following definition  singles out two different types of  closed extensions. This distinction  will be highlighted in the axiomatization of rich structures given in section $3.1$.
\begin{lem}
Suppose that $A\subseteq M$ is finite.
Then there is a finite set $B$
containing $A$ such that
$\langle A\rangle=\langle B\rangle$ and
and $B\leqslant \langle B\rangle=\langle A\rangle$.
\end{lem}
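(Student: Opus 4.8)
The plan is to exhibit $B$ explicitly: I would take $B := A \cup p(\langle A\rangle)$, that is, $A$ together with all colored elements of the generated structure $\langle A\rangle$. Note that one cannot simply take $B=\cl_{\langle A\rangle}(A)$, since by Remark~\ref{rem:2.17} this set is only guaranteed to be finite when $\alpha$ is rational. What rescues the statement for every $\alpha$ is the observation that $\langle A\rangle$, although possibly infinite, carries only finitely many colored points, and that adjoining exactly these to $A$ already produces a closed set.

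First I would record that every element of $\langle A\rangle$ is $\mathcal{L}$-algebraic over $A$: the universe of the $\mathcal{L}_p$-structure generated by $A$ coincides with the universe of the $\mathcal{L}$-substructure generated by $A$ (the predicate $p$ being interpreted by restriction), and the latter is contained in $\dcl_{\mathcal{L}}(A)\subseteq\acl_{\mathcal{L}}(A)$. Hence $\dim(C)\leqslant\dim(A)$ for every finite $C\subseteq\langle A\rangle$, because any $\acl$-independent subset of $\acl(A)$ has size at most $\dim(A)$. Now I would invoke $M\in\mathcal{K}_\alpha^+$: if $c_1,\dots,c_n$ are pairwise distinct colored elements of $\langle A\rangle$, then $0\leqslant\delta(\{c_1,\dots,c_n\})=\dim(\{c_1,\dots,c_n\})-\alpha n\leqslant\dim(A)-\alpha n$, so $n\leqslant\dim(A)/\alpha$. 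Thus $p(\langle A\rangle)$ is finite and $B$ is a finite set. Since $A\subseteq B\subseteq\langle A\rangle$ we get $\langle A\rangle\subseteq\langle B\rangle\subseteq\langle\langle A\rangle\rangle=\langle A\rangle$, so $\langle B\rangle=\langle A\rangle$, as required.

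It remains to check $B\leqslant\langle B\rangle$. Let $C\subseteq_{\text{fin}}\langle B\rangle=\langle A\rangle$ be arbitrary. By construction every colored element of $\langle A\rangle$ lies in $B$, so $p(C/B)$, the set of colored elements of $C\setminus B$, is empty; hence $\delta(C/B)=\dim(C/B)-\alpha|p(C/B)|=\dim(C/B)\geqslant 0$. By Definition~\ref{212}(2) this says exactly that $B\leqslant\langle B\rangle=\langle A\rangle$, completing the proof. The whole content of the argument is the finiteness of $p(\langle A\rangle)$, which I would flag as the crux: it is precisely the combination of $\langle A\rangle\subseteq\acl_{\mathcal{L}}(A)$ with the defining inequalities of $\mathcal{K}_\alpha^+$ that makes a \emph{finite} closed generating set available, and this is what the operator $\cl_M$ fails to deliver when $\alpha$ is irrational.
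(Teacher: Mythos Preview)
Your proposal is correct and follows essentially the same approach as the paper: both take $B = A \cup p(\langle A\rangle)$ and bound $|p(\langle A\rangle)|$ via the inequality $0 \leqslant \delta(\cdot)$ coming from $M\in\mathcal{K}_\alpha^+$. The paper's computation applies $\delta$ to $A c_1,\ldots,c_k$ rather than to $\{c_1,\ldots,c_n\}$ alone, obtaining the marginally sharper bound $|p(\langle A\rangle)\setminus A|\leqslant \delta(A)/\alpha$, but the idea is identical; you also spell out the verifications that $\langle B\rangle = \langle A\rangle$ and $B\leqslant\langle B\rangle$, which the paper leaves implicit.
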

\begin{proof}
If $c_{1},\ldots , c_{k}\in \langle A\rangle\setminus A$  are colored elements then
\begin{align*}
0\leqslant \delta(Ac_{1},\ldots,c_{k})=\delta(A)-k\alpha.
\end{align*} Hence
if $C\subseteq\langle A\rangle$ and each elements of $C$ is colored then
\begin{align*}
|C\setminus A|\leqslant \delta(A)/\alpha\leqslant |A|/\alpha.
\end{align*}
So let $B=A \cup
p(\langle A\rangle)$.
\end{proof}
\paragraph*{Remark and Convention.}
For $B$ as in the above lemma,
each $x\in \langle B\rangle -B$ is non-colored. Hence we make the convention that
whenever we write $\langle A\rangle$
we assume that $A\leqslant \langle A\rangle$. Notice that by this convention for  $A\subseteq B$ we have that   $\langle A\rangle \leqslant \langle B\rangle$ if and only if $A\leqslant B$.
\begin{dfn}
 Assume that
 $\langle A\rangle\subseteq \langle B\rangle$ and
 $ A\leqslant B$ . Call
$B$
\begin{enumerate}
\item
\textit{algebraic} over
$A$ if
$\dim(b/A)=0$
for all
$b\in B\setminus A$, and
\item
\textit{transcendental}
over
$A$
if
$\dim(b/A)=1$
for all
$b\in B\setminus A$.
\end{enumerate}
\end{dfn}
\begin{rem}\label{rem:2.19}
It can be easily seen that if
$A\leqslant B\subseteq \langle B \rangle$
then there exists
$B_{1}$ such that
$A\leqslant B_{1}\leqslant B$
with
$B_{1}$ algebraic over
$A$ and
$B$
transcendental over
$B_{1}$. Furthermore, if $ B$ is algebraic over $A$  and  $f:\langle B \rangle \to M$ is an $\mathcal{L}_p$-embedding in some $M\in\mathcal{K}_{\alpha}^{+}$ then $ \cl_M(f(B))=\cl_M(f(A))\oplus_{f(A)}f(B) $. So, in particular, for each natural $n$ we have $\cl^n_{M}(f(B))=\cl^n_{M}(f(A))\oplus_{f(A)}f(B)$.
\end{rem}
\section{  Rich Structures and their Axiomatization}
\subsection{Rich Structures}
We have already defined what we mean by $T$
having
the free-amalgamation property (see Definition \ref{free-amalgamation-property}).
Along the same lines,
we say that
$(\mathcal{K}_{\alpha}^{+},\leqslant )$ has the free-amalgamation property (defined more precisely after the following definition)
if
the conditions in Definition
\ref{free-amalgamation-property} hold
with $M_i, M$ structures in the class
$\mathcal{K}^+_\alpha$,
and
$f_i,g_i$ strong embeddings in the sense of the following definition.
\begin{dfn}
\label{strong-embdedding}
By
a \textit{strong embedding}
$f: M\to  N$
we mean an
$\mathcal{L}_{p}$-embedding
$f$
such that
$f(M)\leqslant N$.
\end{dfn}
So by
the class \mbox{$(\mathcal{K}^+_\alpha,\leqslant)$} having the amalgamation property we mean that
for each
$M_{0}, M_{1}, M_{2}$
and each pair of strong
embeddings
$f_{1}:M_{0}\to  M_{1}$,
$f_{2}: M_{0}\to  M_{2}$, there are
$ M$
and
strong
embeddings
$g_{1}: M_{1}\to  M$,
$g_{2}: M_{2}\to M$
such that
$g_1\circ f_1 =g_2\circ f_2$.
\begin{lem}\label{lem:17}
The class \mbox{$(\mathcal{K}^+_\alpha,\leqslant)$} has the amalgamation property. Moreover, if  both $f_1,f_2$ are algebraically-closed then we can  choose $ M\in \mathcal{K}^+_\alpha $ such that
 $g_{1}(M_{1})$ and  $g_{2}(M_{2})$ are free over $g_{1}\circ f_{1}(M_{0})$ in $M$.

\end{lem}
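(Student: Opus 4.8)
The plan is to build the $\mathcal{L}$-reduct of the amalgam as a free amalgam of $\mathcal{L}$-structures in the sense of the free-amalgamation property of $T$, equip it with the only colouring compatible with $M_1$ and $M_2$, and then verify via the additivity and submodularity of $\delta$ that the result lands in $\mathcal{K}^+_\alpha$ and receives $M_1,M_2$ as strong substructures. The one genuine subtlety is that the $\mathcal{L}$-algebraic closure of $M_0$ is \emph{not} canonical among models of $T^\forall$ (two different $\leqslant$-overstructures of $M_0$ may carry non-isomorphic $\acl$'s of $M_0$), so the free-amalgamation property of $T$, which amalgamates only over algebraically closed substructures, cannot be applied to $M_1,M_2$ over $M_0$ directly; I would first pass to models of $T$.

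Concretely: after identifying $M_0$ with $f_1(M_0)=f_2(M_0)$ and renaming, one may assume $f_1,f_2$ are inclusions with $M_0\leqslant M_1$, $M_0\leqslant M_2$ and $M_1\cap M_2=M_0$. Choose $N_1,N_2\models T$ with $M_1\subseteq N_1$, $M_2\subseteq N_2$ as $\mathcal{L}$-structures. By quantifier elimination -- precisely the isomorphism $\acl_{N_1}(M_0)\cong\acl_{N_2}(M_0)$ over $M_0$ recorded at the start of Section~2.1 -- identify $\widehat{M}_0:=\acl_{N_1}(M_0)=\acl_{N_2}(M_0)$, which is then algebraically closed in both $N_1$ and $N_2$. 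The free-amalgamation property of $T$ yields $N\models T$ and $\mathcal{L}$-embeddings $N_1,N_2\hookrightarrow N$, identical on $\widehat{M}_0$, with $N_1$ and $N_2$ free over $\widehat{M}_0$ in $N$; in particular $N_1\cap N_2=\widehat{M}_0$. Viewing $M_1,M_2\subseteq N$, set $M:=\langle M_1M_2\rangle_N$ and colour $x$ in $M$ iff $x$ is coloured in $M_1$ or in $M_2$. This is well defined: the colourings agree on $M_0$ because the $f_i$ are $\mathcal{L}_p$-embeddings, every point of $\widehat{M}_0\setminus M_0$ (hence every other point of $M_1\cap M_2\subseteq N_1\cap N_2=\widehat{M}_0$) is algebraic over $M_0$ and so uncoloured in each $M_i$ since $M_0\leqslant M_i$, and points of $M\setminus(M_1\cup M_2)$ are declared uncoloured. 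Then $M\models T^\forall$ (it is an $\mathcal{L}$-substructure of $N$), and the inclusions $g_1,g_2$ of $M_1,M_2$ into $M$ are $\mathcal{L}_p$-embeddings with $g_1\circ f_1=g_2\circ f_2$.

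To see $M\in\mathcal{K}^+_\alpha$, take finite $A\subseteq M$. Points of $A\setminus(M_1\cup M_2)$ are uncoloured and lie in $\acl(M_1M_2)$, so deleting them does not raise $|p(A)|$ and cannot raise $\dim(A)$; hence $\delta(A)\geqslant\delta(A\cap(M_1\cup M_2))$ and we may assume $A\subseteq M_1\cup M_2$. Writing $A_i=A\cap M_i$, additivity gives $\delta(A)=\delta(A_1/A_2)+\delta(A_2)$ with $\delta(A_2)\geqslant 0$ since $A_2\subseteq M_2\in\mathcal{K}^+_\alpha$. For the first summand: transitivity of $\forkindep^{\dim}$, freeness of $N_1,N_2$ over $\widehat{M}_0$, and the fact that elements of $\widehat{M}_0\subseteq\acl(M_0)$ do not lower $\dim$ over $M_0$ together yield $M_1\forkindep_{M_0}^{\dim}M_2$, so $\dim(A_1/A_2)\geqslant\dim(A_1/A_2M_0)=\dim(A_1/M_0)$; and a coloured point of $A_1$ off $M_0$ lies in $M_1\setminus M_0$, hence (since $M_1\cap M_2\subseteq\widehat{M}_0$ and $\widehat{M}_0\setminus M_0$ is uncoloured) off $M_2\supseteq A_2$, so $|p(A_1/A_2)|=|p(A_1/M_0)|$. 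Therefore $\delta(A_1/A_2)\geqslant\delta(A_1/M_0)\geqslant 0$ because $M_0\leqslant M_1$, and $\delta(A)\geqslant 0$. The symmetric computation -- reduce to $B\subseteq M_1\cup M_2$, peel off the $M_1$-part by additivity, then use $\dim(B/M_1)=\dim(B/M_0)$ and $|p(B/M_1)|=|p(B/M_0)|$ to get $\delta(B/M_1)=\delta(B/M_0)\geqslant 0$ from $M_0\leqslant M_2$ -- shows $M_1\leqslant M$, and likewise $M_2\leqslant M$; so $g_1,g_2$ are strong and amalgamation is proved.

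For the ``moreover'' clause, assume in addition $\acl_{M_i}(M_0)=M_0$. Then $\acl_{N_i}(M_0)\cap M_i=\acl_{M_i}(M_0)=M_0$, so $\widehat{M}_0\setminus M_0$ is disjoint from $M_1$ and from $M_2$; since $N_1\cap N_2=\widehat{M}_0$ in $N$, this forces $M_1\cap M_2\subseteq\widehat{M}_0\cap M_1=M_0$, i.e. $M_1\cap M_2=M_0$. Combined with $M_1\forkindep_{M_0}^{\dim}M_2$ (established above, and computed the same in $M$ as in $N$ since $M$ is an $\mathcal{L}$-substructure), this says exactly that $g_1(M_1)$ and $g_2(M_2)$ are free over $g_1\circ f_1(M_0)$ in $M$, as required. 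I expect the main obstacle to be precisely the non-canonicity of $\acl(M_0)$: it is what forces the detour through $N_1,N_2\models T$, and the care needed to transfer the freeness -- both the $\dim$-independence and the disjointness $M_1\cap M_2=M_0$ -- back down from $N$ to $M$.
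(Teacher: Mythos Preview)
Your proof is correct and follows essentially the same strategy as the paper's: pass to models of $T$, amalgamate over the algebraic closure of $M_0$ using the free-amalgamation property of $T$, push the colouring forward, and verify closedness via additivity of $\delta$ together with $\dim$-independence. The differences are purely bookkeeping. The paper absorbs the passage to models of $T$ and to $\acl(M_0)$ into two ``without loss of generality'' reductions and then takes the amalgam $M$ to be the full model of $T$; you instead keep $M_1,M_2$ distinct from the ambient $N_1,N_2\models T$, keep $M_0$ distinct from $\widehat{M}_0$, and cut $M$ down to $\langle M_1M_2\rangle_N$. Your explicit treatment of the ``moreover'' clause (checking both $\forkindep^{\dim}$ and $M_1\cap M_2=M_0$ separately) is a bit more careful than the paper's, where it is left implicit in the WLOG reductions. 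Your decomposition for $M_1\leqslant M$ (first reduce to $B\subseteq M_1\cup M_2$, then peel off the $M_2$-part and compare over $M_0$) is actually slightly cleaner than the paper's, which writes $\delta(A/M_1)=\delta(C/M_1)+\delta(B/M_1C)$ and asserts $\delta(B/M_1C)=\delta(B/M_0C)$; the latter equality of dimensions over the extra set $C$ is not immediate from $M_1\forkindep^{\dim}_{M_0}M_2$ alone, whereas your order of peeling avoids the issue.
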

\begin{proof}
Assume that
$\mathcal{L}_{p}$-structures
$M_{0}, M_{1}, M_{2}$ in $\mathcal{K}_{\alpha}^{+}$
and  the strong
embeddings
$f_{1}$,
$f_{2}$
 are as above.
 \par
We can assume, without loss of generality that
$M_1,M_2$ are models of $T$.
This can be obtained
by   $\mathcal{L}$-embedding of each $M_1$ and $M_2$ into a model of $T$ and a suitable extension of the coloring.
\par
 We may also assume that $f_{1}$,
$f_{2}$ are algebraically-closed strong embeddings. This is because
  $\acl_{M_1}(M_0)$ and $\acl_{M_2}(M_0)$
are
$\mathcal{L}_p$-isomorphic over $M_0$,
as
both $f_1$ and $f_2$ are strong (see the explanation after Definition \ref*{212}).
\par
Now since
$T$
has the free-amalgamation property,  there are a
model
$M$
and $\mathcal{L}$-embeddings
$g_{i}: M_{i} \to M$,
such that $g_1\circ f_1=g_2\circ f_2$.
Defining
$p(M)=g_{1}(p(M_{1}))\cup g_{2}(p(M_{2}))$, it is clear that
$M\in \mathcal{K}^+_\alpha$.
\par
It remains only to prove that
$g_{1},g_{2}$
are  strong
embeddings.
For notational simplicity
we denote again by $M_1,M_2$ and $M_0$
their respective images in $M$, that is
$g_{1}(M_{1})$, $g_{2}(M_{2})$
and
$g_{1}\circ f_{1}(M_{0})$.
In the following we show that $M_1\leqslant M$ (and $M_2 \leqslant M$
 follows similarly).
\par
 Let
$A\subseteq  M\setminus M_{1}$ be arbitrary.  Writing
$A=BC$
where
$B\subseteq {M}_{2}$,
$C\subseteq M\setminus {M}_{2}$
we have
\begin{align*}
\delta(A/{M}_{1})=\delta(C/{M}_{1})+\delta(B/{M}_{1}C).
\end{align*}
Now $\delta(C/{M}_1)\geqslant 0$ because
all elements of $C$ are non-colored.
Also $\delta(B/{M}_1C)=\delta(B/{M_0}C)\geqslant 0$, because
 ${M}_1\forkindep^{\dim}_{{M}_0}{M}_2$.
\par
\end{proof}
As we have let
$\emptyset$ in $\mathcal{K}^+_\alpha$,
the amalgamation property implies
the joint embedding property. It is also clear that
if $M$ is in  $\mathcal{K}_{\alpha}^{+}$
then so are all its substructures.
In this sense,
we call
$(\mathcal{K}_{\alpha}^{+},\leqslant )$
a \textit{Fra\"{i}ss\'{e} class}, and
it is natural to look for  its Fra\"{i}ss\'{e} limit, or the \textit{rich}
structures in the sense below.
\begin{dfn}\label{def:11}
An
$\mathcal{L}_{p}$-structure
$M$
in
$\mathcal{K}_{\alpha}^{+}$
is called
$\lambda$-rich,
for $\lambda\geqslant \aleph_0$ a cardinal,
  if
\begin{enumerate}
\item
$M\models T$,
\item
If
$M_1\leqslant M_2$
are
generated by a set of cardinality
$<\lambda$,
then each strong embedding
$f: M_{1}\to M$
extends to
a strong embedding
$g: M_{2}\to M$.
\end{enumerate}
\end{dfn}
The Amalgamation and Joint Embedding properties  together with the fact
that $\mathcal{K}^+_\alpha$ is closed
under the
union of $\leqslant$-chains, imply
that the $\lambda$-rich structures exist.
Indeed a union of $\leqslant$-chains of models of $T$, and
the
quantifier-elimination (for this union to be  a model) give such a structure.
We prefer to state this also as the following fact.
\begin{fact}
$(\mathcal{K}_{\alpha}^{+},\leqslant )$ contains
$\lambda$-rich structures,
for all
$\lambda\geqslant \aleph_{0}$.
\end{fact}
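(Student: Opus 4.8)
The plan is the standard Fra\"{i}ss\'{e}-Hrushovski chain construction, with Lemma~\ref{lem:17} providing the amalgamation at each step. Fix $\lambda$, fix a seed $M_0\in\mathcal{K}_{\alpha}^{+}$ with $M_0\models T$ (for instance a model of $T$ carrying the empty coloring), and choose a regular cardinal $\kappa$ large enough that $\operatorname{cf}(\kappa)$ exceeds both the size of every $<\lambda$-generated $\mathcal{L}_{p}$-structure (since $\mathcal{L}$ is countable, such a structure has size $<\lambda$ when $\lambda>\aleph_{0}$ and is countable when $\lambda=\aleph_{0}$) and the number $2^{\lambda}$ of their isomorphism types. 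I would build a continuous $\leqslant$-increasing chain $(M_i)_{i<\kappa}$ of models of $T$ in $\mathcal{K}_{\alpha}^{+}$, with $M_{\mu}=\bigcup_{i<\mu}M_i$ at limits, together with a standard bookkeeping enumerating all \emph{tasks} --- a task being a triple $(M_1,M_2,f)$ with $M_1\leqslant M_2$ in $\mathcal{K}_{\alpha}^{+}$ both $<\lambda$-generated and $f\colon M_1\to M_j$ a strong embedding into one of the already-constructed stages --- arranged so that every task is treated at some later successor step.

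At a successor step $i+1$ treating a task $(M_1,M_2,f)$ with $f\colon M_1\to M_j$ and $j\leqslant i$, note first that $f$ is still a strong embedding into $M_i$, since $f(M_1)\leqslant M_j\leqslant M_i$ by transitivity of $\leqslant$ along the chain (Remark~\ref{mol:5}). I then apply Lemma~\ref{lem:17} to amalgamate $M_i$ with $M_2$ over $M_1$ along $f$ and the inclusion $M_1\hookrightarrow M_2$; after renaming I may take the resulting $M_{i+1}\in\mathcal{K}_{\alpha}^{+}$ to extend $M_i$ with $M_i\leqslant M_{i+1}$ and obtain a strong embedding $g\colon M_2\to M_{i+1}$ with $g\restriction M_1=f$. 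The essential point is that the proof of Lemma~\ref{lem:17} produces $M_{i+1}$ as a \emph{model of $T$} --- it first passes to models of $T$ and then invokes the free-amalgamation of $T$ --- so each stage in the chain is again a model of $T$. At a limit $\mu$, $M_{\mu}=\bigcup_{i<\mu}M_i$ lies in $\mathcal{K}_{\alpha}^{+}$ because any finite subset with negative $\delta$-value would already sit inside some $M_i$; each $M_i\leqslant M_{\mu}$ by transitivity of $\leqslant$ (i.e. $\mathcal{K}_{\alpha}^{+}$ is closed under unions of $\leqslant$-chains); and $M_{\mu}\models T$ because quantifier elimination makes $T$ model-complete, hence closed under unions of chains of models.

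Set $M^{*}=\bigcup_{i<\kappa}M_i$; by the same considerations $M^{*}\in\mathcal{K}_{\alpha}^{+}$, $M^{*}\models T$, and $M_i\leqslant M^{*}$ for all $i$. For $\lambda$-richness, let $M_1\leqslant M_2$ be $<\lambda$-generated and let $f\colon M_1\to M^{*}$ be a strong embedding. Since $\operatorname{cf}(\kappa)>|M_1|$, the image $f(M_1)$ is contained in some $M_i$, and then $f(M_1)\leqslant M^{*}$ together with $f(M_1)\subseteq M_i\subseteq M^{*}$ yields $f(M_1)\leqslant M_i$ by smoothness (Remark~\ref{mol:5}(3)); hence $f$ is a strong embedding into $M_i$, so $(M_1,M_2,f)$ is one of the tasks. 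By the bookkeeping it is treated at the successor step $k+1$ for some $k\geqslant i$, producing a strong $g\colon M_2\to M_{k+1}$ with $g\restriction M_1=f$; since $g(M_2)\leqslant M_{k+1}\leqslant M^{*}$, the map $g\colon M_2\to M^{*}$ is a strong embedding extending $f$. Hence $M^{*}$ is $\lambda$-rich. (Taking $M_1=\emptyset$ records, in passing, the joint embedding property: every $<\lambda$-generated member of $\mathcal{K}_{\alpha}^{+}$ strongly embeds into $M^{*}$.)

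I do not expect a genuine obstacle in this argument: the substantive inputs --- amalgamation (Lemma~\ref{lem:17}), closure of $\mathcal{K}_{\alpha}^{+}$ under unions of $\leqslant$-chains, and model-completeness of $T$ --- are all available, and what remains is purely organizational, namely setting up the bookkeeping so that the ever-growing supply of tasks is each eventually treated and keeping the chain's cardinality controlled. The one place where the specific construction matters is that the amalgam produced at a successor step must be chosen inside a model of $T$, which, as noted, is exactly what the proof of Lemma~\ref{lem:17} delivers; this is what lets the union remain a model of $T$, i.e. satisfy clause~(1) of Definition~\ref{def:11}.
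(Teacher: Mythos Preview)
Your proposal is correct and is precisely an elaboration of the paper's approach: the paper merely records that amalgamation, joint embedding, closure of $\mathcal{K}_{\alpha}^{+}$ under unions of $\leqslant$-chains, and quantifier elimination of $T$ (so that the union remains a model of $T$) yield $\lambda$-rich structures, stating this as a fact without further detail. Your chain-plus-bookkeeping argument fills in exactly these steps, including the key observation that Lemma~\ref{lem:17} produces an amalgam which is a model of $T$.
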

\begin{lem}
\label{rich-saturated-1}
Each $\lambda$-rich structure is
$\lambda$-saturated as a model of $T$.
\end{lem}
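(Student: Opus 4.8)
The plan is to realize in a given $\lambda$-rich $M$ an arbitrary complete $\mathcal{L}$-type $q(\bar{x})$ over a parameter set $X\subseteq M$ with $|X|<\lambda$, by extending the inclusion of a suitably chosen closed substructure via richness. First I would fix a realization $\bar{a}$ of $q$ in an elementary extension $M^{*}\succeq M$, and split $\bar{a}=\bar{a}''\bar{a}'$, where $\bar{a}''$ lists the coordinates lying in $\acl_{M^{*}}(X)=\acl_M(X)$ and $\bar{a}'$ lists the remaining ones, so that $\bar{a}'\cap\acl_M(X)=\emptyset$ and $\bar{a}''\subseteq\acl_M(X)\subseteq M$. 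Next I would choose a closed substructure $M_1\leqslant M$ with $\acl_M(X)\subseteq M_1$ (hence $X,\bar{a}''\subseteq M_1$) that is generated by a set of cardinality $<\lambda$; such an $M_1$ exists by Remark~\ref{rem:2.17}, which controls the size of closures (for $\lambda=\aleph_0$ this uses that $\cl_M$ of a finite set is finite, immediate when $\alpha$ is rational; for irrational $\alpha$ and $\lambda=\aleph_0$ one works with a countable closed substructure and this requires a bit more care). As the identity $M_1\hookrightarrow M$ is a strong embedding, it now suffices to produce $M_2\in\mathcal{K}^{+}_{\alpha}$ with $M_1\leqslant M_2$, $M_2$ generated by $<\lambda$ elements, and a tuple of $M_2$ realizing $q$ over $X$.

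To construct $M_2$ I would realize $q$ over $M_1$ ``freely and without new colour''. Using the free-amalgamation property of $T$ (in the spirit of Observation~\ref{obs:1.7}), realize $\qftp_{\mathcal{L}}(\bar{a}'/\acl_M(X))$ by a tuple, again denoted $\bar{a}'$, that is $\dim$-independent from $M_1$ over $\acl_M(X)$ and disjoint from $M_1$; then set $M_2:=\langle M_1\bar{a}'\rangle$ with $p(M_2):=p(M_1)$, so that no new element is coloured. For finite $B\subseteq M_2$ one then has $\delta(B)=\delta(B\cap M_1)+\dim\bigl((B\setminus M_1)/(B\cap M_1)\bigr)\geqslant 0$, the first summand being non-negative because $M_1\in\mathcal{K}^{+}_{\alpha}$ and the second being a dimension and so non-negative; thus $M_2\in\mathcal{K}^{+}_{\alpha}$. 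For finite $A\subseteq M_2$ one has $\delta(A/M_1)=\dim(A/M_1)\geqslant 0$, since $A\setminus M_1$ carries no colour; thus $M_1\leqslant M_2$. And $M_2$ is generated by a generating set of $M_1$ together with $\bar{a}'$, hence by $<\lambda$ elements, while $\bar{a}''\bar{a}'$ still realizes the quantifier-free part of $q$ over $X$ by the choice of $\bar{a}'$.

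Now $\lambda$-richness extends the strong embedding $\mathrm{id}\colon M_1\to M$ to a strong embedding $g\colon M_2\to M$. Put $\bar{c}:=g(\bar{a}''\bar{a}')=\bar{a}''\,g(\bar{a}')$, using that $g$ fixes $M_1\supseteq\bar{a}''$ pointwise. Since $g$ is in particular an $\mathcal{L}$-embedding fixing $X$, we get $\qftp_{\mathcal{L}}(\bar{c}/X)=\qftp_{\mathcal{L}}(\bar{a}''\bar{a}'/X)$, which is the quantifier-free part of $q$; as $M\models T$ and $T$ eliminates quantifiers, $\tp_{\mathcal{L}}(\bar{c}/X)=q$. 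Hence $M$ realizes every complete $\mathcal{L}$-type over every parameter set of size $<\lambda$, i.e.\ $M$ is $\lambda$-saturated as a model of $T$.

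The main obstacle is the construction of $M_2$ rather than the closing appeal to richness: the key is to recognise that the right way to realize an $\mathcal{L}$-type over a closed set inside $\mathcal{K}^{+}_{\alpha}$ is to adjoin the realization freely and with no new colour, which is exactly what keeps the pre-dimension non-negative and preserves the closedness of $M_1$; and one must absorb the algebraic part of $q$ into $M_1$ while keeping $M_1$ generated by $<\lambda$ elements, which is where Remark~\ref{rem:2.17} (and the rational/irrational dichotomy when $\lambda=\aleph_0$) comes into play. Once $M_2$ is available, everything else is formal.
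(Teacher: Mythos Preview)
Your proposal is correct and follows the same idea as the paper: realize the $\mathcal{L}$-type in an extension where nothing new is coloured, so the closed base stays closed, and then apply richness. The paper's version is more direct---it works with a $1$-type, simply replaces $X$ by $\cl_M(X)$, and forms $\langle\cl(X)\,d\rangle$ inside an elementary extension $N\succeq M$ with $p(N\setminus M)=\emptyset$, without splitting the realization into algebraic and non-algebraic parts and without invoking free amalgamation; those steps in your argument are harmless but unnecessary, since the uncoloured-extension trick already guarantees $\langle\cl(X)\rangle\leqslant\langle\cl(X)\,d\rangle$ regardless of how $d$ sits over $M_1$.
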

\begin{proof}
Let
$M$
be
$\lambda$-rich.
Assume that
$\Sigma(x)$ is a partial  $1$-type over a set
$X\subseteq {M}$,
where $|X|<\lambda$ and
without loss of generality  assume that
$X$  is closed in
${M}$.
 Let
 $d\not \in  {M}$ be a solution of
$\Sigma(x)$ in
 an  $\mathcal{L}$-elementary  extension
 ${N}$ of
${M}$.
Extend the coloring of ${M}$ to ${N}$
by letting $\neg p(x)$ for each $x\in {N}-{M}$, so that now ${N}\in \mathcal{K}_\alpha^+$.
Observe that
$\cl_{M}(X)\leqslant {M},{N}$ and hence we keep the notation
$\cl(X)$.
It is clear that
 $\langle \cl(X)d\rangle\in \mathcal{K}^+_{\alpha}$,
$\langle \cl( X)\rangle\leqslant \langle \cl(X)d\rangle$
and $\langle \cl(X)\rangle\leqslant {M}$.
Now since
$M$  is $\lambda$-rich, there is  a strong embedding
$f:\langle \cl(X)d\rangle \to {M}$, and hence
$f(d)$
is the solution of
$\Sigma(x)$ in
${M}$.
\end{proof}

\par
It is  worth noting that
in the literature,
a
$\lambda$-rich structure is sometimes called
$\lambda$-ultra-homogeneous.
Moreover, it is clear from the definition
(letting
$M_{1}=\emptyset$) that for
each
$M_{2}$
as above,
there is a  strong embedding
$g: M_{2}\to  M$.
This property of $M$ is called
$\lambda$-universality.
\par
We are now set to provide an axiomatization
for the rich structures.
\subsection{Axiomatization}
Below for the $\mathcal{L}$-formula $\varphi(\bar{x}, \bar{y})$ and  a natural number $k$, we let $D_{\varphi, k}(\bar{y})$  and $d_{\varphi, k}(\bar{y})$   be
the formulas introduced in Lemma
\ref{defacl} and Fact \ref{fact:1.1} respectively.
\begin{dfn}
\label{def-of-t-alpha}
Let
$\mathbb{T}_{\alpha}$
be an
$\mathcal{L}_{p}$-theory whose models
$\mathbb{M}$
satisfy the following.
\begin{enumerate}
\item
$\mathbb{M}\models T$
\item
$\mathbb{M}\in \mathcal{K}_{\alpha}^{+}$ (that is
$\delta(A)\geqslant 0$
for all
$A\subseteq_{\text{fin}} \mathbb{M}$).
\item
For each $\bar{a}\leqslant \bar{a}\bar{d}$,  where $\bar{d}=(d_{1},...,d_{n})$ is transcendental over $\bar{a}$, $\dim(\bar{d}/\bar{a})=k$, and
$\varphi(\bar{x},\bar{y}) $ is strong with respect to
$\bar{d},\bar{a}$,
\begin{align*}
\mathbb{M}\models &\forall \bar{y}\hspace*{3mm}\Big{[}D_{\varphi,k}(\bar{y})\wedge d_{\varphi,k}(\bar{y})
\to \exists \bar{z}\hspace*{2mm}\big{(}\varphi(\bar{z},\bar{y})\wedge
 \bigwedge_{p(d_i)}p(z_{i})\wedge \bigwedge_{\neg p(d_i)}\neg p(z_{i})\hspace*{2mm} \big{)} \Big{]}\hspace*{2mm}
\end{align*}
\end{enumerate}
\end{dfn}
The items above can be expressed
as first-order axiom-schemes (for the second item
this is pointed out before
Definition \ref{212}).
In the rest of this section we aim to establish the following theorem.
\begin{thm*}
$\mathbb{T}_{\alpha}$
is a complete theory that axiomatizes $\omega_1$-rich structures of
$\mathcal{K}_{\alpha}^{+}$.
\end{thm*}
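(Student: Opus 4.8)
The statement asserts two things: that the $\omega_{1}$-rich structures of $\mathcal{K}_{\alpha}^{+}$ are precisely the models of $\mathbb{T}_{\alpha}$ up to elementary equivalence, and that $\mathbb{T}_{\alpha}$ is complete. I would argue along the usual Fra\"{i}ss\'{e}--Hrushovski lines, establishing (a) every $\omega_{1}$-rich structure is a model of $\mathbb{T}_{\alpha}$; (b) every sufficiently saturated model of $\mathbb{T}_{\alpha}$ is $\omega_{1}$-rich; (c) any two $\omega_{1}$-rich structures are elementarily equivalent. Granting these, a model of $\mathbb{T}_{\alpha}$ has a sufficiently saturated elementary extension, which by (b) is $\omega_{1}$-rich, hence by (c) is elementarily equivalent to any fixed $\omega_{1}$-rich structure; thus $\mathbb{T}_{\alpha}$ is complete, and by (a) it is the common theory of the $\omega_{1}$-rich structures.

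For (a), axioms~1 and~2 of Definition~\ref{def-of-t-alpha} hold by the very definitions of $\omega_{1}$-richness and of $\mathcal{K}_{\alpha}^{+}$. For axiom~3, fix $\bar{a}\leqslant\bar{a}\bar{d}$ with $\bar{d}$ transcendental over $\bar{a}$, $\dim(\bar{d}/\bar{a})=k$, and $\varphi$ strong with respect to $\bar{d},\bar{a}$, let $\mathbb{M}$ be $\omega_{1}$-rich, and let $\bar{b}\in\mathbb{M}$ satisfy $D_{\varphi,k}(\bar b)\wedge d_{\varphi,k}(\bar b)$. Set $P=\acl_{\mathbb{M}}(\cl_{\mathbb{M}}(\bar b))$; by Remark~\ref{rem:2.17} this is countable and closed in $\mathbb{M}$, $\acl$-closed, and $\langle P\rangle=P\leqslant\mathbb{M}$. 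By Lemma~\ref{rich-saturated-1} $\mathbb{M}$ is $\omega_{1}$-saturated as a model of $T$, so $D_{\varphi,k}(\bar b)$ has a solution $\bar{e}\subseteq\mathbb{M}$ with $\bar{e}\cap\acl(\bar b)=\emptyset$ and $\dim(\bar e/\bar b)=k$ (the value is forced to be $k$: $d_{\varphi,k}$ bounds it above, and by Lemma~\ref{existance-of-strong} a strong formula bounds the dimension of its solutions by that of its defining pair). Applying Observation~\ref{obs:1.7} to $\tp_{\mathcal{L}}(\bar e/\bar b)$ with parameter set $P$ produces $\bar{z}\subseteq\mathbb{M}$ with $\tp_{\mathcal{L}}(\bar z/\bar b)=\tp_{\mathcal{L}}(\bar e/\bar b)$ and $\bar b\bar z$ free over $\bar b$ from $P$. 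Strongness of $\varphi$ together with Remark~\ref{rem:2.1.5} gives that $\bar z$ has the same dimension profile over $\bar b$ as $\bar d$ over $\bar a$; recolouring $\bar z$ so that $p(z_{i})$ holds exactly when $p(d_{i})$ does, and keeping all other colours of $\mathbb{M}$, the freeness over $\bar b$ together with $\bar a\leqslant\bar a\bar d$ makes the so-recoloured structure $N:=\langle P\bar z\rangle$ lie in $\mathcal{K}_{\alpha}^{+}$ with $\langle P\rangle\leqslant N$. As $N$ is countably generated and the inclusion $\langle P\rangle\hookrightarrow\mathbb{M}$ is strong, $\omega_{1}$-richness extends it to a strong $g\colon N\to\mathbb{M}$; then $g(\bar z)$ satisfies $\varphi(\,\cdot\,,\bar b)$ (by quantifier elimination) with the colours prescribed by $\bar d$, which is the conclusion of axiom~3.

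For (b), let $\mathbb{M}\models\mathbb{T}_{\alpha}$ be sufficiently saturated; then $\mathbb{M}\models T$ by axiom~1. To verify condition~2 of Definition~\ref{def:11} one may assume the given strong embedding is the inclusion of a closed $M_{1}\leqslant\mathbb{M}$, write a countably generated $M_{2}\supseteq M_{1}$ as the union of a $\leqslant$-increasing $\omega$-chain over $M_{1}$, and split each step via Remark~\ref{rem:2.19}; so it suffices to realise, over a closed countably generated $N\leqslant\mathbb{M}$, (i) an algebraic closed extension and (ii) a transcendental closed extension. Case~(i) adds no coloured points (dimension $0$ over $N$ forces this, as $\delta\geqslant0$), hence is an $\mathcal{L}$-algebraic extension, realised strongly inside $\acl_{\mathbb{M}}(N)$, which is closed with no new colours. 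For (ii), with new tuple $\bar d$ and finite $\bar a\subseteq N$ over which $\bar d$ is $\dim$-free, the $\mathcal{L}_{p}$-type $\tp_{\mathcal{L}}(\bar d/N)$ plus the colour constraints of $\bar d$ is finitely satisfiable in $\mathbb{M}$: a finite fragment refers to a finite $\bar a'\subseteq N$, one packages it into a strong formula $\varphi'$ with $\bar a'\leqslant\bar a'\bar d$ (Lemma~\ref{existance-of-strong}), checks that $D_{\varphi',k'}\wedge d_{\varphi',k'}$ holds at $\bar a'$ (an $\mathcal{L}$-condition, so transported along $\tp_{\mathcal{L}}(\bar a')$ into $\mathbb{M}$), and invokes axiom~3. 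Saturation then realises the full type by some $\bar z$, which one further arranges free over $N$ from $\mathbb{M}$ (Observation~\ref{obs:1.7}); freeness, $N\leqslant\mathbb{M}$ and $N\leqslant\langle N\bar z\rangle$ then force $\langle N\bar z\rangle\leqslant\mathbb{M}$. Limit stages are harmless, since by finite character and monotonicity of $\dim$ a union of a $\leqslant$-chain of closed subsets of $\mathbb{M}$ is again closed.

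Finally, for (c), any two $\omega_{1}$-rich structures are elementarily equivalent by a back-and-forth argument whose positions are $\mathcal{L}_{p}$-isomorphisms between finite subsets closed ``up to depth $n$'', the uniform bound of Remark~\ref{rem:2.17}(1) keeping these finite even for irrational $\alpha$; the moves are supplied by $\omega_{1}$-richness and Lemma~\ref{lem:17}. I expect the main obstacle to be reconciling the purely existential axiom~3 with the \emph{strong} (closed) embeddings that richness demands: in (a) an arbitrary $\varphi$-realisation must be upgraded to one generating a closed extension of $\langle P\rangle$, and in (b) the axiom must be made to yield a realisation that is free over $N$ and hence automatically closed in $\mathbb{M}$; in both directions the bridge is the free-amalgamation of $T$, used through Observation~\ref{obs:1.7} and always paired with saturation. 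A secondary subtlety, present only for irrational $\alpha$, is that closures $\cl_{\mathbb{M}}(\cdot)$ are merely countable rather than finite; this is exactly why $\omega_{1}$- (not $\omega$-) richness is the right notion here, and why the back-and-forth in (c) must be carried out on the finite approximations $\cl^{n}$.
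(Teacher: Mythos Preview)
Your outline for (a) and (c) is broadly sound and close to the paper's treatment of those parts. The real gap is in (b).

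In (b)(ii) you write: ``Saturation then realises the full type by some $\bar z$, which one further arranges free over $N$ from $\mathbb{M}$ (Observation~\ref{obs:1.7}); freeness, $N\leqslant\mathbb{M}$ and $N\leqslant\langle N\bar z\rangle$ then force $\langle N\bar z\rangle\leqslant\mathbb{M}$.'' Both steps fail. First, Observation~\ref{obs:1.7} lets you move a realisation of an $\mathcal{L}$-type so as to be $\dim$-free from a prescribed small set $Y$; it cannot make $\bar z\in\mathbb{M}$ free from $\mathbb{M}$ itself. Second, and more fundamentally, the implication you claim is simply false: $N\leqslant\mathbb{M}$ and $N\leqslant\langle N\bar z\rangle$ do \emph{not} give $\langle N\bar z\rangle\leqslant\mathbb{M}$. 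Axiom~3 only produces a tuple with the prescribed $\mathcal{L}$-type and colours; nothing prevents the existence of coloured elements of $\mathbb{M}$ algebraic over $N\bar z$ (but not over $N$) that make $\delta(\cdot/N\bar z)$ negative. For irrational $\alpha$ this obstacle is essential, not a technicality: a single application of axiom~3 cannot yield a closed embedding, and saturation does not help because closedness is not expressible as a type over $N$.

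The paper therefore does \emph{not} prove (b) directly. Instead it introduces the weaker notion of \emph{semi-genericity} (Definition~\ref{sem-gen}) and shows that $\omega_1$-saturated models of $\mathbb{T}_\alpha$ are semi-generic (Theorem~\ref{lem:3.14}). The proof of that theorem is where the real work lies: one uses Lemmas~\ref{lem:1.25} and~\ref{lem:3.3.4} to enlarge $B$ to some $D^*$ with $\delta(D^*/A)$ arbitrarily small, applies axiom~3 and saturation to obtain infinitely many pairwise-free copies of $D^*$ over $f(A)$, and then uses the $\Delta$-system argument of Lemma~\ref{lem:1.22} together with a pigeonhole on $\delta$ to show that all but finitely many of these copies satisfy the $\cl^n$-condition $(*)$. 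Semi-genericity is then enough to run the inductive back-and-forth of Theorem~\ref{lem:3.15}, which yields completeness; richness of saturated models is obtained only \emph{afterwards}, as a corollary of the resulting description of types (Corollary~\ref{types-closures}). Your roadmap reverses this order and thereby skips exactly the argument that makes the theorem nontrivial.
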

Towards a proof of the theorem above, what we can prove in the rest of this subsection with
the information so far,
is that $\omega_1$-rich
structures in general, and $\omega$-rich
structures when $\alpha$ is rational,
are a model of $\mathbb{T}_\alpha$.
Proving that models of $\mathbb{T}_\alpha$ are rich and therefore giving
the full proof of the theorem will be
possible only in Section 3.3, after some more auxiliary lemmas in the next section.
\begin{lem}
Any
$\omega_{1}$-rich structure
$\mathbb{M}$
 is a model of
$\mathbb{T}_{\alpha}$ (and hence
$\mathbb{T}_{\alpha}$ is consistent).
\end{lem}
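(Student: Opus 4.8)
The plan is to check the three clauses of Definition~\ref{def-of-t-alpha} in an $\omega_1$-rich $\mathbb{M}$. Clauses (1) and (2) are built into the definition of richness (Definition~\ref{def:11}), so all the content is in clause (3). Fix data $\bar a\leqslant\bar a\bar d$ with $\bar d=(d_1,\dots,d_n)$ transcendental over $\bar a$, $\dim(\bar d/\bar a)=k$, and $\varphi(\bar x,\bar y)$ strong with respect to $\bar d,\bar a$; note that, being strong, $\varphi$ is quantifier-free (a conjunction of the $\bigvee$-definable formulas of Lemma~\ref{existance-of-strong} together with a distinctness formula), which will matter at the end, and that $\bar d\cap\bar a=\emptyset$. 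Let $\bar b$ be a tuple of $\mathbb{M}$ with $\mathbb{M}\models D_{\varphi,k}(\bar b)\wedge d_{\varphi,k}(\bar b)$; if there is no such $\bar b$ the axiom holds vacuously. Fix a monster $\mathfrak{C}\models T$ with $\mathbb{M}\preceq\mathfrak{C}$.

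The first task is to build the $\mathcal{L}_p$-structure that richness will absorb. By Lemma~\ref{defacl} (applied in $\mathfrak{C}$) and the definition of $d_{\varphi,k}$, the partial type $\Sigma(\bar x)$ over $\bar b$ asserting $\varphi(\bar x,\bar b)$, $\dim(\bar x/\bar b)\geqslant k$ and $x_i\notin\acl(\bar b)$ for all $i$ has a realisation in $\mathfrak{C}$ disjoint from $\acl(\bar b)$. Put $X=\cl_{\mathbb{M}}(\bar b)$; by Remark~\ref{rem:2.17} the set $X$ is countable, $\langle X\rangle\leqslant\mathbb{M}$, and $\langle X\rangle$ is generated by a countable set. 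Applying Observation~\ref{obs:1.7} with the pair $(\bar b,X)$ in the roles of $(X,Y)$ yields a realisation $\bar e=(e_1,\dots,e_n)$ of $\Sigma$ in $\mathfrak{C}$ with $\bar e\cap X=\emptyset$ and with $\bar b\bar e$ free from $X$ over $\bar b$; hence $\dim(\bar e'/X)=\dim(\bar e'/\bar b)$ for every subtuple $\bar e'$ of $\bar e$, and $\dim(\bar e/\bar b)=k$ by $d_{\varphi,k}(\bar b)$. Let $N=\langle X\bar e\rangle_{\mathfrak{C}}$, coloured so that $p$ agrees with $\mathbb{M}$ on $X$, $p(e_i)$ holds iff $p(d_i)$ holds, and all remaining elements of $N$ are non-coloured (so that $X\bar e\leqslant N$, in line with our convention on the bracket notation).

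The heart of the argument is to show $\langle X\rangle\leqslant N$. Given a finite $B\subseteq N\setminus X$, write $B=\bar e'\cup C$ with $\bar e'\subseteq\bar e$ and $C$ disjoint from $\bar e$, hence non-coloured; then $\delta(B/X)=\delta(\bar e'/X)+\dim(C/X\bar e')\geqslant\delta(\bar e'/X)$, so it suffices to bound $\delta(\bar e'/X)$ from below. Because $\varphi$ is strong and $\mathfrak{C}\models\varphi(\bar e,\bar b)$ with $\dim(\bar e/\bar b)=k=\dim(\bar d/\bar a)$, clause (2) of the definition of ``strong'' applied to the partition isolating the coordinates of $\bar e'$, together with Remark~\ref{rem:2.1.5}, gives $\dim(\bar e'/\bar b)\geqslant\dim(\bar d'/\bar a)$ for the corresponding subtuple $\bar d'$ of $\bar d$. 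Since $\bar e\cap X=\emptyset$ and $\bar d\cap\bar a=\emptyset$ we have $|p(\bar e'/X)|=|\{d_i\in\bar d':p(d_i)\}|=|p(\bar d'/\bar a)|$, and the hypothesis $\bar a\leqslant\bar a\bar d$ applied to $\bar a\bar d'$ gives $\delta(\bar d'/\bar a)\geqslant 0$. Hence
\begin{align*}
\delta(\bar e'/X) &=\dim(\bar e'/X)-\alpha|p(\bar e'/X)|=\dim(\bar e'/\bar b)-\alpha|p(\bar d'/\bar a)| \\
&\geqslant\dim(\bar d'/\bar a)-\alpha|p(\bar d'/\bar a)|=\delta(\bar d'/\bar a)\geqslant 0 ,
\end{align*}
so $\langle X\rangle\leqslant N$. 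From this and $\langle X\rangle\in\mathcal{K}_\alpha^+$ one deduces $N\in\mathcal{K}_\alpha^+$ by a short computation: for finite $B'\subseteq N$ split $B'=(B'\setminus X)\cup(B'\cap X)$ and use $\delta(B'\setminus X/B'\cap X)\geqslant\delta(B'\setminus X/X)\geqslant 0$ (the first inequality by monotonicity of $\dim$, the second by $\langle X\rangle\leqslant N$) together with $\delta(B'\cap X)\geqslant 0$.

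Finally, $N$ is generated by the countable set $X\bar e$, $\langle X\rangle\leqslant N$, and the inclusion $\langle X\rangle\hookrightarrow\mathbb{M}$ is a strong embedding; so $\omega_1$-richness extends it to a strong embedding $g\colon N\to\mathbb{M}$ fixing $X$ pointwise, in particular $g(\bar b)=\bar b$. Since $N\models\varphi(\bar e,\bar b)$ ($N$ being an $\mathcal{L}$-substructure of $\mathfrak{C}$ and $\varphi$ quantifier-free) and $g$ preserves quantifier-free formulas, $\mathbb{M}\models\varphi(g(\bar e),\bar b)$; and $g$ is an $\mathcal{L}_p$-embedding, so $p(g(e_i))$ holds in $\mathbb{M}$ exactly when $p_N(e_i)$ holds, i.e.\ exactly when $p(d_i)$ holds. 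Thus $g(\bar e)$ witnesses the existential quantifier in clause (3), and $\mathbb{M}\models\mathbb{T}_\alpha$; consistency of $\mathbb{T}_\alpha$ then follows from the already-established existence of $\omega_1$-rich structures. I expect the only genuinely delicate point to be the verification $\langle X\rangle\leqslant N$ — extracting from ``$\varphi$ strong'' and $\bar a\leqslant\bar a\bar d$ the simultaneous lower bounds $\delta(\bar e'/X)\geqslant 0$ over all subtuples $\bar e'$ — together with the fact that these bounds are naturally phrased over $\bar b$, so one must invoke Observation~\ref{obs:1.7} to place $\bar e$ freely over the (possibly infinite, when $\alpha$ is irrational) closure $X$ in order for them to persist.
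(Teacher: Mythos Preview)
Your proof is correct and follows essentially the same route as the paper's: locate a realisation $\bar e$ of $\varphi(\bar x,\bar b)$ with $\dim(\bar e/\bar b)=k$ and $\bar e\cap\acl(\bar b)=\emptyset$, move it (via Observation~\ref{obs:1.7}) so that it is disjoint from and free over $\bar b$ from the closure $X=\cl_{\mathbb{M}}(\bar b)$, recolour $\langle X\bar e\rangle$ according to the colours of $\bar d$, verify $\langle X\rangle\leqslant N$ using strongness of $\varphi$ and Remark~\ref{rem:2.1.5}, and then apply $\omega_1$-richness to pull $N$ back into $\mathbb{M}$ over $\langle X\rangle$. The only cosmetic difference is that the paper finds $\bar e$ inside $\mathbb{M}$ itself, invoking Lemma~\ref{rich-saturated-1} to get $\omega_1$-saturation of $\mathbb{M}$ as a model of $T$, whereas you pass to an external monster $\mathfrak{C}\succeq\mathbb{M}$; the two devices are interchangeable here.
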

\begin{proof}
By Lemma \ref{rich-saturated-1},
$\mathbb{M}$ is $\omega_1$-saturated as
a model of $T$.
Now we only need to prove that
item 3 in Definition
\ref{def-of-t-alpha} holds.
Let $\bar{a},\bar{d}$ and
$\varphi$ be as in there,
and $\bar {a}^{'}$ in $\mathbb{M}$ be such that $\mathbb{M}\models D_{\varphi,k}(\bar{a}^{'})\wedge d_{\varphi,k}(\bar{a}^{'})$.
So there is
$\bar{d}^{'}\in \mathbb{M}^{|\bar{d}|}$
such that
$\mathbb{M}\models \varphi(\bar{d}^{'},\bar{a}^{'})$, $\bar{d}^{'}\cap \acl(\bar{a}^{'})=\emptyset$,
and
$\dim(\bar{d}^{'}/\bar{a}^{'})=k$.   Since $\langle
 \cl(\bar{a}^{'})\rangle $ is countable, by
$\omega_{1}$-saturation of
$\mathbb{M}$ and  Observation
\ref{obs:1.7}
we may assume that
$\langle \cl(\bar{a}^{'})\rangle \cap \bar{d}^{'}=\emptyset$
and, $ \bar{d}' $ and $ \langle \cl(\bar{a}') \rangle $
are free over $ \bar{a} '$.
Consider the
$ \mathcal{L} $-structure   generated by
$ \cl(\bar{a}')\bar{d}' $ in $ \mathbb{M} $ and make it into
the $ \mathcal{L}_p $-structure $ N $ by coloring
$ \cl(\bar{a}')\bar{d}' $  with the same colors as $\cl(\bar{a}) \bar{d} $
and leaving the rest of the elements non-colored.
It is clear that $\langle\cl(\bar{a}')\rangle \leqslant \mathbb{M}$.
We claim that also $\langle\cl(\bar{a}')\rangle \leqslant {N}$.
If this claim is proved then since
$\mathbb{M}$  is $\omega_{1}$-rich, there is a strong embedding
$f:N\to \mathbb{M}$ which  fixes
$\langle\cl(\bar{a}^{'})\rangle $  pointwise. If we put
$f(\bar{d}^{'})=\bar{e}$,
then
\begin{align*}
\mathbb{M}\models \big{(}\varphi(\bar{e},\bar{a}^{'})\wedge \bigwedge _{p(d_i)}p(e_{i})\wedge \bigwedge_{\neg p(d_i)}\neg p(e_i)\big{)}.
\end{align*}
Now to prove the claim,  let $ \bar{d}'_1\subseteq \bar{d} '$ and
$  \bar{e}\subseteq N\setminus \cl (\bar{a}^{'})\bar{d}'$. Notice that
$\dim(\bar{d}'_1/\langle \cl(\bar{a}^{'})\rangle)=\dim(\bar{d}^{'}_{1}/\bar{a}^{'})$ since $\bar{d}\forkindep^{\dim} _{\cl(\bar{a}^{'})}\bar{a}^{'}$. Moreover as $\varphi$ is strong, by \Cref{rem:2.1.5}  we have  $\dim(\bar{d}^{'}_{1}/\bar{a}^{'})\geqslant \dim(\bar{d}_{1}/\bar{a})$ . Therefore
\begin{align*}
\delta(\bar{d}'_1\bar{e}/\langle \cl(\bar{a}^{'})\rangle)&=
\dim(\bar{d}'_1\bar{e}/\langle \cl(\bar{a}^{'})\rangle)-\alpha |p(\bar{d}'_1\bar{e})|\\
&=\dim(\bar{d}'_1/\langle \cl(\bar{a}^{'})\rangle)-\alpha |p(\bar{d}'_1)|\\
&=\dim(\bar{d}^{'}_{1}/\bar{a}^{'})-\alpha |p(\bar{d}'_1)|\\
&\geqslant \dim(\bar{d}_{1}/\bar{a})-\alpha |p(\bar{d}_1)|\geqslant 0.
\end{align*}
\end{proof}
It follows from the proof above that
in case $\alpha$ is rational, $\omega$-rich structures are a model of
$\mathbb{T}_{\alpha}$.
\subsection{Auxiliary Lemmas}
The following lemmas and their proofs are
inspired by
\cite{L07}, Lemmas $3.1$ and  $4.1$.
When $A\leqslant B$ and $B$
is transcendental over $A$, these lemmas
provide us with a way to find a set
$D$  containing
$B$ (in both cases of rational and irrational $\alpha$) whose  pre-dimension   is  as much  close from below   to $\delta(B)$ as we wish,
yet $A\leqslant D$ and $D$ is transcendental over $A$.
\par
We will in particular utilize Lemmas
\ref{lem:1.22},
\ref{lem:1.25}, and \ref{lem:3.3.4}
 to show that any $\omega_1$-saturated model of
$\mathbb{T}_{\alpha}$, for an irrational
$\alpha$ is in fact semi-generic (Theorem \ref{lem:3.14}).
Similarly we will make use of Lemmas
\ref{lem:3.3.5} and \ref{lem:3.3.6} to prove that any $\omega$-saturated
model of $\mathbb{T}_\alpha$,
for a rational $\alpha$ is $\omega$-rich
(Theorem \ref{th:3.4.6}).
\begin{lem}\label{bound}
For each $n\in \omega$ there exists an $\epsilon_n>0$ such that for each $A\subseteq B$, if $|B\setminus A|<n$ and $\delta(B/A)<0$ then $\delta(B/A)<-\epsilon_n$.
\end{lem}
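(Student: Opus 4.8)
The plan is to exploit a simple discreteness fact: once $n$ bounds $|B\setminus A|$, the quantity $\delta(B/A)$ can take only finitely many possible values, so those among them that are negative must stay uniformly away from $0$.

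First I would unwind the notation. For $A\subseteq B$ with $C:=B\setminus A$ finite and $|C|<n$, we have $\delta(B/A)=\dim(C/A)-\alpha|p(C/A)|$. Here $\dim(C/A)$ is a non-negative integer with $\dim(C/A)\le |C|<n$ (this holds even when $A$ is infinite, by the finite character of $\dim$ recalled in Fact \ref{fact:1.1}), and $|p(C/A)|$ is a non-negative integer with $|p(C/A)|\le|C|<n$ since $p(C/A)\subseteq C$. Consequently $\delta(B/A)$ always belongs to the finite set
\[
S_n:=\{\, m-\alpha j \;:\; m,j\in\{0,1,\dots,n-1\}\,\}\subseteq\mathbb{R},
\]
which depends only on $n$ and on the fixed real $\alpha$, and not on $A$, $B$, or the ambient structure.

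Then I would set $S_n^{-}:=S_n\cap(-\infty,0)$. If $S_n^{-}=\emptyset$ the hypothesis $\delta(B/A)<0$ is never met for such $A,B$, so any choice such as $\epsilon_n:=1$ works vacuously. Otherwise $S_n^{-}$ is a non-empty finite set of negative reals and hence has a greatest element, which I write as $-c$ with $c>0$; put $\epsilon_n:=c/2$. Now if $|B\setminus A|<n$ and $\delta(B/A)<0$, then $\delta(B/A)\in S_n^{-}$, whence $\delta(B/A)\le -c=-2\epsilon_n<-\epsilon_n$, which is exactly the assertion.

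I do not expect a genuine obstacle here: the statement is purely the above discreteness phenomenon. The only mild point to keep honest is that $\dim(C/A)$ and $|p(C/A)|$ are indeed bounded by $|C|$ irrespective of whether $A$ is finite or infinite, which is immediate from the basic properties of $\dim$ in Fact \ref{fact:1.1} and the definition of $p(\,\cdot\,/\,\cdot\,)$.
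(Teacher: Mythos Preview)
Your proof is correct and follows essentially the same approach as the paper: both observe that $\delta(B/A)$ ranges over a finite set depending only on $n$, and hence its negative values are bounded away from $0$. You have simply made explicit the finite set $S_n=\{m-\alpha j:0\le m,j<n\}$ that the paper's one-line argument leaves implicit.
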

\begin{proof}
The set  $V_n=\{\delta(B/A):\  A\subseteq B,\  |B\setminus A|<n\}$ is finite. Therefore there exists $\epsilon_n>0$, depending on $n$, which keeps every negative value in $V_n$ bounded away from $0$, as required by the lemma.
\end{proof}
Recall that by  the finite version of the $\Delta$-system lemma \cite{J11},
if $\{B_i\}_{i\in \omega}$ is a countably-infinite   family of
finite sets of  bounded size,
then there is an arbitrarily-large
finite subfamily of
$B_i$'s  with the same mutual intersection.
The following lemma shows that
when the $B_i$ are subsets of a structure in $ \mathcal{K}^+_\alpha $, then it is possible that the intersection in question is closed in each $B_i$.
\begin{lem}\label{lem:1.22}
For  $k\geqslant 1$, let
$\{B_{i}:\hspace*{2mm} i\in \omega\}$
be a family of
$k$-element subsets of
a structure $M$. Then
for each natural number
$n$
there are  a
subset
$\Omega$
of
$\omega $ with $|\Omega|\geqslant n$
and
$A\subseteq_{\text{fin}}M$
such that
\begin{enumerate}
\item
$A\leqslant B_{i}$, for each
$i\in \Omega$ and
\item
$B_{i}\cap B_{j}=A$, for  $i\neq j\in \Omega $.
\end{enumerate}
\end{lem}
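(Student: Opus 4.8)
The plan is to combine the classical $\Delta$-system lemma with a compactness/pigeonhole argument that forces the common intersection of the $\Delta$-system to be closed in each member. First I would apply the finite $\Delta$-system lemma to the family $\{B_i : i \in \omega\}$: since all $B_i$ have size $k$, for any prescribed $N$ there is $\Omega_0 \subseteq \omega$ with $|\Omega_0| \geq N$ and a fixed ``root'' $R$ with $B_i \cap B_j = R$ for all distinct $i,j \in \Omega_0$. This already secures condition (2) with $A$ replaced by $R$, but $R$ need not be closed in the $B_i$. The idea is then to replace $R$ by its closure inside one of the $B_i$ and to pass to a still-large subfamily on which this works uniformly.

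The key step is the following. For each $i \in \Omega_0$ consider $A_i := \mathrm{cl}_{B_i}(R)$, the closure of $R$ computed inside the finite structure $B_i$ (which makes sense since $B_i$ is itself in $\mathcal{K}^+_\alpha$, being a substructure of $M$). Each $A_i$ satisfies $R \subseteq A_i \leqslant B_i$ and $|A_i \setminus R| \leq k - |R|$, so the $A_i$ live among the finitely many (up to isomorphism over $R$) possible closed extensions of $R$ of bounded size. Moreover each $A_i \setminus R \subseteq B_i \setminus R$, and the sets $B_i \setminus R$ are pairwise disjoint for $i \in \Omega_0$; hence the $A_i$ are "mostly disjoint" away from $R$. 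Now I would run a second $\Delta$-system argument (or simply a pigeonhole on the finitely many possible quantifier-free types of $A_i$ over $R$) to find $\Omega \subseteq \Omega_0$ with $|\Omega| \geq n$ on which the $A_i$ form a $\Delta$-system with some root $A \supseteq R$; since $A \subseteq A_i \setminus(B_j\setminus R)$-type considerations force $A \subseteq R$ in fact $A = R$ once the subfamily is large enough, because any element of $A_i \cap A_j$ lies in $B_i \cap B_j = R$. So after this second pass the common root of the $A_i$ is exactly $R$ itself, while each $A_i \leqslant B_i$.

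At this point set $A := R$. We still need $A \leqslant B_i$ for each $i \in \Omega$, not merely $A_i \leqslant B_i$; but the point is that we can instead take $A := A_{i_0}$ for a single fixed $i_0 \in \Omega$ and check that this common enlargement is closed in \emph{every} $B_i$. Here I would use that $A_{i_0} \setminus R$ is disjoint from $B_i \setminus R$ for $i \neq i_0$, together with submodularity of $\delta$: for $i \neq i_0$ and any finite $C$ with $A \subseteq C \subseteq B_i$, the set $C \cap B_{i_0} \subseteq A_{i_0}$ satisfies $\delta(C \cap B_{i_0}/A) \geq 0$ vacuously (as $C\cap B_{i_0}=A$) since $C \setminus A \subseteq B_i \setminus R$ meets $B_{i_0}$ only in $R$; then $\delta(C/A) \geq \delta(C/A_{i_0}) + \delta(A_{i_0}/A)$ via the chain rule, and $\delta(C/A_{i_0}) = \delta(C/A_{i_0} \cup (\text{stuff in }B_{i_0}))$ — more carefully one uses $A_{i_0} \leqslant B_{i_0}$, $R \leqslant A_{i_0}$, and submodularity of $\delta$ applied to $C$ and $B_{i_0}$ inside $M$ to conclude $\delta(C/A) \geq 0$. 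Replacing the root by $A := A_{i_0}$ (and noting $B_i \cap B_j = R \subseteq A$, while $A \setminus R$ is disjoint from $B_j\setminus R$ so $B_i\cap B_j\cap$-adjustments are harmless — in fact to get $B_i\cap B_j = A$ exactly one throws $A\setminus R$ into each $B_i$, i.e. works with $B_i' := B_i \cup A$, which has bounded size and still forms a $\Delta$-system with root $A$) yields both conclusions.

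I expect the main obstacle to be the last bookkeeping step: verifying that a \emph{single} closed set $A$ can simultaneously serve as root of the $\Delta$-system and be closed in each $B_i$, rather than having a different closed root $A_i$ for each $i$. The clean way around this is the two-stage argument above — first $\Delta$-systematize the $B_i$, then $\Delta$-systematize their closures-of-root, then enlarge uniformly — and the decisive technical input is the submodularity inequality $\delta(AB) + \delta(A \cap B) \leq \delta(A) + \delta(B)$ recorded before Definition \ref{212}, which is exactly what lets the closedness of $A_{i_0}$ in $B_{i_0}$ propagate to closedness in the other $B_i$'s given the disjointness of the $\Delta$-system leaves.
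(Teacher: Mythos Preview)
Your first step (apply the finite $\Delta$-system lemma to get a root $R$) is right, but the rest of the plan goes off track. The lemma asks for $A\subseteq B_i$ with $B_i\cap B_j=A$; once the $\Delta$-system is fixed with root $R$, the only candidate for $A$ is $R$ itself. Your proposal to replace $A$ by $A_{i_0}=\cl_{B_{i_0}}(R)$ is not admissible: for $i\neq i_0$ the set $A_{i_0}$ is not contained in $B_i$ (indeed $A_{i_0}\setminus R\subseteq B_{i_0}\setminus R$ is disjoint from $B_i$), so ``$A_{i_0}\leqslant B_i$'' is not even well-posed. Passing to $B_i':=B_i\cup A_{i_0}$ changes the statement you are proving, and in any case your submodularity sketch does not establish $A_{i_0}\leqslant B_i'$: nothing prevents elements of $A_{i_0}\setminus R$ from being $\acl$-related to colored elements of $B_i\setminus R$, which would produce $\delta(C/A_{i_0})<0$ for suitable $C\subseteq B_i'$.

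The missing idea is that one does not need to enlarge $R$ at all; one instead \emph{bounds the number of bad indices}. The paper first takes the $\Delta$-system $\Omega'$ of size $n+\lfloor k/\epsilon_k\rfloor$ with root $A$, where $\epsilon_k$ is the gap constant of Lemma~\ref{bound}. For each $i$ with $A\nleqslant B_i$ pick $C_i\subseteq B_i$ with $\delta(C_i/A)<0$; since $|C_i\setminus A|<k$, Lemma~\ref{bound} gives $\delta(C_i/A)<-\epsilon_k$. The $C_i$'s are disjoint over $A$, so if there are $m$ of them then $\delta\bigl(\bigcup C_i\bigr)\leqslant \delta(A)-m\epsilon_k\leqslant k-m\epsilon_k$, and positivity in $\mathcal{K}^+_\alpha$ forces $m\leqslant \lfloor k/\epsilon_k\rfloor$. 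Discarding those $m$ indices leaves $\Omega\subseteq\Omega'$ of size $\geqslant n$ with $A\leqslant B_i$ for every $i\in\Omega$. The decisive inputs are thus the uniform gap $\epsilon_k$ and the global positivity $\delta\geqslant 0$, not submodularity.
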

\begin{proof}
Let  $\epsilon_k$ be as in Lemma \ref{bound}. By the finite
version of $\Delta$-system lemma, for each natural number
$n$
there are a finite subset
$\Omega' \subset \omega$ with
$|\Omega'|\geqslant n +\lfloor k/\epsilon_k\rfloor $ and a finite subset $A$ of $M$ such that
$B_{i}\cap B_{j}=A$ for each
$i\neq j \in \Omega'$.
But  we need a further step to show that there are at most $  \lfloor{\frac{k}{\epsilon_k}}\rfloor  $-many $B_i$ in which $A$ is not closed.
\par
 Put
$Z=\{i\in \Omega':\hspace*{2mm}A\nleqslant B_{i}\}=\{{i_1},\ldots, {i_m}\}$. We show that
$m\leqslant \lfloor \frac{k}{\epsilon_k}\rfloor$. For $i\in Z$ since $A\nleqslant B_i$, there is $C_i$ with $A\subsetneq C_i \subseteq B_i$ and $\delta(C_i/A)<0$.
Put $C=\bigcup_{i\in Z} C_i$. Then
\begin{align*}
\delta(C/A)=\delta(C_{i_m}/AC_{i_1},...,C_{i_{m-1}})+...+\delta(C_{i_2}/AC_{i_1})+\delta(C_{i_1}/A)
&\leqslant -m\epsilon_k.
\end{align*}
So
$0\leqslant\delta(C)\leqslant -m\epsilon_k+\delta(A)\leqslant -m\epsilon_k+k$. Therefore
$m\leqslant \lfloor k/\epsilon_k\rfloor$.
\end{proof}
\begin{lem}
\label{lem:1.25}
($\alpha$ irrational) Assume that
$A\leqslant B$
and $B$ is transcendental over $A$.
Then for each $\epsilon>0$,
there
is a
finite set
$D$
containing $B$ and transcendental over $ A$
such that
\begin{enumerate}
\item
$-\epsilon<\delta(D/B)\leqslant 0 $,
\item
$\delta (B)\leqslant \delta(D')$
for all
$B\subseteq  D^{'}\subsetneqq D$,
\item
$A\leqslant D$.
\end{enumerate}
\end{lem}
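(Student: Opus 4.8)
\emph{Proof proposal.} First I would dispose of the degenerate case $B=A$, where $D=B$ works vacuously, and assume from now on that $A\subsetneq B$. The plan rests on one observation: since $\alpha$ is irrational and $B$ is transcendental over $A$, for \emph{every} $B'$ with $A\subsetneq B'\subseteq B$ one has $\delta(B'/A)>0$ --- indeed $A\leqslant B$ gives $\delta(B'/A)\geqslant 0$, while $\dim(B'/A)$ is a strictly positive integer, so the equality $\dim(B'/A)=\alpha\lvert p(B'/A)\rvert$ that $\delta(B'/A)=0$ would force is impossible. Set $\mu:=\min\{\delta(B'/A):A\subsetneq B'\subseteq B\}>0$, fix a base $\bar b=(b_1,\dots,b_d)$ of $B$ over $A$ (so $d\geqslant 1$), and put $\epsilon':=\tfrac12\min\{\epsilon,\alpha,\mu,1\}$. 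I will build $D=B\cup\{c_1,\dots,c_v\}$, with all $c_j$ coloured and $B$ retaining its colours; the $c_j$ will be transcendental over $A$ but each algebraic over $B$ together with an auxiliary set of fresh transcendentals, so that the pre-dimension of the new part is large over $A$ yet only a hair above $\alpha v$ over $B$.

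Concretely: using that $\{v\alpha:v\in\omega\}$ is dense mod $1$, I would fix a large $v$ whose fractional part $\{v\alpha\}$ (of $v\alpha$) lies in $(0,\epsilon')$, with $v(1-\alpha)\geqslant d$ and $r:=\lfloor v\alpha\rfloor\geqslant 1$ (so $\alpha v=r+\{v\alpha\}$ and $v\geqslant d+r$). Working inside an $\aleph_0$-saturated $M\models T$ containing $B$, pick $e_1,\dots,e_r\in M$ with $\dim(e_1\cdots e_r/B)=r$, and apply \Cref{lem:1.8} to the $A$-independent set $\{b_1,\dots,b_d,e_1,\dots,e_r\}$ of size $d+r$ to get $c_1,\dots,c_v\in\acl(A\bar b\,e_1\cdots e_r)$ such that every $(d+r)$-element subset of $\{b_1,\dots,b_d,e_1,\dots,e_r,c_1,\dots,c_v\}$ is a base for $\acl(A\bar b\,e_1\cdots e_r)$ over $A$. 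Since the $e_i$ are generic over $B$ we have $\acl(A\bar b)=\acl(B)$ and the $e_i$ remain independent over $B$; combining this with the base property of \Cref{lem:1.8} yields, for all $C'\subseteq\{c_1,\dots,c_v\}$,
\[\dim(C'/B)=\min(\lvert C'\rvert,r),\qquad \dim(C'/A)=\min(\lvert C'\rvert,d+r),\]
and in particular $\dim(c_j/A)=1$ and $c_j\notin B$, so $D$ is transcendental over $A$. (The $e_i$ serve to reinflate $\dim(C'/B)$ up to $r$, while the base $\bar b$ makes $\dim(C'/A)$ strictly exceed $\dim(C'/B)$; both roles are essential.)

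Granting these two formulas, items (1)--(3) become pure arithmetic via $\delta(XY/Z)=\delta(X/YZ)+\delta(Y/Z)$ and monotonicity of $\dim$. I would argue: (1) $\delta(D/B)=r-\alpha v=-\{v\alpha\}\in(-\epsilon,0)$; (2) a proper $D'=B\cup C'$ has $q:=\lvert C'\rvert<v$ and $\delta(D'/B)=\min(q,r)-\alpha q$, which is $q(1-\alpha)\geqslant 0$ when $q\leqslant r$ and is $\geqslant r-\alpha(v-1)=\alpha-\{v\alpha\}\geqslant 0$ when $r<q<v$ (using $\epsilon'\leqslant\alpha$); (3) writing $A\subseteq E\subseteq D$ as $E=B'\cup C'$ with $A\subseteq B'\subseteq B$, one has $\delta(E/A)=\delta(B'/A)+\delta(C'/AB')$ with $\delta(C'/AB')\geqslant\min(q,r)-\alpha q\geqslant-\{v\alpha\}$, and when $B'=A$ the sharper $\delta(C'/A)=\min(q,d+r)-\alpha q\geqslant 0$; since $\delta(B'/A)=0$ only for $B'=A$ while $\delta(B'/A)\geqslant\mu>\epsilon'>\{v\alpha\}$ otherwise, this gives $\delta(E/A)\geqslant 0$ in every case. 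Finally $D\in\mathcal K^+_\alpha$ because $\emptyset\leqslant A\leqslant D$.

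The hard part, and the reason the statement is subtle, is item (3). A first attempt using coloured points in general position over a fresh transcendental set does satisfy (1) and (2) but \emph{fails} $A\leqslant D$, because such coloured points already have negative pre-dimension over $A$ by themselves; one cannot cure this by shrinking $\epsilon$, since $\delta(D/A)=\delta(B/A)+\delta(D/B)$ with $\delta(D/B)<0$. What rescues (3) is the combination of (a) the \emph{strict} inequality $\delta(B'/A)>0$ supplied by irrationality, which gives a fixed amount $\mu>0$ of slack to absorb the small quantity $\delta(D/B)$, and (b) entangling the $c_j$ with a base $\bar b$ of $B$ over $A$ through \Cref{lem:1.8}, which is precisely what forces $\dim(C'/A)>\dim(C'/B)$ and keeps $E=A\cup C'$ above pre-dimension $0$. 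The only genuinely technical point will be verifying the two displayed dimension formulas, a routine exchange-property computation given \Cref{lem:1.8}.
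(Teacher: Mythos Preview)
Your proof is correct and follows the same broad strategy as the paper's: pick integers so that a new block of coloured points has pre-dimension in $(-\epsilon,0)$ over $B$, manufacture those points via \Cref{lem:1.8}, and verify (1)--(3) arithmetically. The genuine difference is \emph{where} you invoke \Cref{lem:1.8}. The paper applies it over $B$: it takes $s$ fresh generics over $B$ and extends to $k$ coloured points $d_1,\dots,d_k$, each $s$-subset of which is a base over $B$. You instead apply it over $A$, feeding in a base $\bar b$ of $B$ over $A$ together with $r$ fresh generics $\bar e$, so that every $(d{+}r)$-subset of $\{\bar b,\bar e,c_1,\dots,c_v\}$ is a base over $A$. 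This buys you the formula $\dim(C'/A)=\min(|C'|,d{+}r)$, and with it a transparent verification of item~(3) for the awkward subsets $E=B'\cup C'$ with $B'\subsetneq B$. The paper disposes of (3) in one line (``note that $A\leqslant E$ for any proper $E\subsetneq D$''), but under its construction this is not automatic: \Cref{lem:1.8} applied over $B$ gives no lower bound on $\dim(\{d_1,\dots,d_k\}/A)$ beyond $s$, and a careless choice allowed by that lemma (e.g.\ in a vector space, $d_2$ a scalar multiple of $d_1$) can make $\delta(A\cup\{d_1,\dots,d_k\}/A)=s-\alpha k<0$. Your entanglement of the $c_j$ with $\bar b$ over $A$, together with the uniform lower bound $\mu$ on $\delta(B'/A)$ for $A\subsetneq B'\subseteq B$, is exactly what makes item~(3) go through cleanly.
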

\begin{proof}
Without loss of generality we may assume that
$0<\epsilon<\alpha$ and $\delta(B/A)>\epsilon$. Note that  the set $\{m-\alpha n:\ m,n\in \mathbb{N}\}$ is dense  in $\mathbb{R}$ (by  Dirichlet's rational approximation theorem).
 Hence there are natural numbers $1\leqslant s<k$
such that
$s/k<\alpha<(s+\epsilon )/k$.
\par
 By Lemma
\ref{lem:1.8} and richness there is a set
$D=Bd_{1},...,d_{k}$
such that
$p(d_{i})$ for all $i\leqslant k$
and
each $s$-element subset
of $\{d_1,\ldots,d_k\}$
together with $B$
is a base for
$D$.
We claim such $D$ satisfies the conditions required by the lemma.
\par
For the first condition note that
$-\epsilon<\delta(D/B)=s-k\alpha<0$.
For the second condition, let
 $B\subseteq D'\subseteq D$ and
$|D'-B|=l$.
If $l\leqslant s$ then $\delta(D'/B)=l-l\alpha>0$ because $\alpha<1$. Also if $s<l<k$ then
$\delta(D'/B)=s-l\alpha>s-\alpha k>0$.
 For the third condition, note that
 $A\leqslant E$  for any proper subset
 $E$ of
 $D$ containing
 $A$.
It only remains to show that
$\delta(D/A)>0$. But this is also easy to see, since
 $\delta(D/A)=\delta(D/B)+\delta(B/A)$
where
 $\epsilon<\delta(B/A)$ and $\delta(D/B)>-\epsilon$.
Finally each
 $d_{i}$ is transcendental over
 $B$, and hence over
 $A$.
 \end{proof}
\begin{lem}\label{lem:3.3.4}
($\alpha$ irrational) Assume that
$A\leqslant B$  and $B$ is transcendental over $A$. Then for each $\mu>0$  and  natural number $n$  there is $ D^{*}$ containing $B$  such that
\begin{enumerate}
\item
$A\leqslant D^{*}$ and $D^{*}$ is transcendental over $A$,
\item
$\delta(D^{*}/A)<\mu$,
\item
$B\leqslant C$,
for each $C$ with $B\subseteq C\subseteq D^{*}$ and $|C\setminus B|<n$.

\end{enumerate}
\end{lem}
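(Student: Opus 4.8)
The plan is to obtain $D^{*}$ by iterating Lemma~\ref{lem:1.25} finitely many times, where at each application we both lower the predimension over $A$ by a definite amount and add the new colored points in a \emph{dimension-spread} way, so that no subset of size $<n$ can witness a drop of predimension over $B$. If $\delta(B/A)<\mu$ we are already done by taking $D^{*}=B$ (the only $C$ with $B\subseteq C\subseteq D^{*}$ is $B$ itself, and $B\leqslant B$), so assume $\delta(B/A)\geqslant\mu$, and fix reals $0<\eta<\epsilon<\min(\mu,\alpha)$.

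I will use the following refinement of Lemma~\ref{lem:1.25}, implicit in its proof: given $A\leqslant E$ with $E$ transcendental over $A$ and $\delta(E/A)>\epsilon$, there is a finite $D\supseteq E$, transcendental over $A$, with $A\leqslant D$, such that every point of $D\setminus E$ is colored, $\dim(D/E)=s\geqslant n$, every $s$-element subset of $D\setminus E$ is a base for $D$ over $E$, and $-\epsilon<\delta(D/E)<-\eta$. Indeed the proof of Lemma~\ref{lem:1.25} produces such a $D$ once one picks integers $s,k$ with $s/k<\alpha<s/(k-1)$ and $k\alpha-s<\epsilon$; with $k=\lfloor s/\alpha\rfloor+1$ one computes $\delta(D/E)=s-k\alpha=-\alpha(1-\{s/\alpha\})$, so it suffices to choose $s\geqslant n$ with $\{s/\alpha\}\in(1-\epsilon/\alpha,\,1-\eta/\alpha)$, which is possible because $1/\alpha$ is irrational, hence $\{s/\alpha\}$ is dense in $(0,1)$.

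Now build a chain $B=D_{0}\subseteq D_{1}\subseteq\cdots$: given $D_{j-1}$ with $A\leqslant D_{j-1}$, $D_{j-1}$ transcendental over $A$ and $\delta(D_{j-1}/A)\geqslant\mu$, apply the refinement (legitimate since $\delta(D_{j-1}/A)\geqslant\mu>\epsilon$) with $E=D_{j-1}$ to obtain $D_{j}\supseteq D_{j-1}$ with the listed properties; stop at the first $N$ with $\delta(D_{N}/A)<\mu$ and put $D^{*}=D_{N}$. Since $\delta(\cdot/A)$ drops by more than $\eta$ at every step, this happens within $\lceil(\delta(B/A)-\mu)/\eta\rceil+1$ steps, and since the last drop was less than $\epsilon<\mu$ we get $0<\mu-\epsilon<\delta(D^{*}/A)<\mu$; together with the fact that $A\leqslant D_{j}$ and transcendence of $D_{j}$ over $A$ are preserved at each step, this yields conditions (1) and (2). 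For condition (3), take $B\subseteq C\subseteq D^{*}$ with $|C\setminus B|<n$ and any $B\subseteq C'\subseteq C$; writing $C'_{j}=C'\cap D_{j}$ we have $C'_{0}=B$, $C'_{N}=C'$, and by additivity $\delta(C'/B)=\sum_{j=1}^{N}\delta(C'_{j}/C'_{j-1})$. Each $C'_{j}\setminus C'_{j-1}$ is a set of $l_{j}$ colored points contained in $D_{j}\setminus D_{j-1}$ with $l_{j}\leqslant|C\setminus B|<n\leqslant s_{j}$; being contained in an $s_{j}$-element base, these $l_{j}$ points are independent over $D_{j-1}$, a fortiori over $C'_{j-1}$, so $\delta(C'_{j}/C'_{j-1})=l_{j}-\alpha l_{j}\geqslant 0$. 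Summing gives $\delta(C'/B)\geqslant 0$, i.e.\ $B\leqslant C$.

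The one genuinely delicate point is the refinement of Lemma~\ref{lem:1.25}: one must be able to choose the integers $s,k$ in its proof so that \emph{simultaneously} $\dim(D/E)=s\geqslant n$ (this is what neutralizes small subsets in condition~(3)) and $\delta(D/E)$ stays bounded away from both $0$ and $-\epsilon$ (the former so that the iteration terminates in finitely many steps, the latter so that it does not push $\delta(\cdot/A)$ below $0$). Both requirements reduce to the density in $(0,1)$ of the fractional parts $\{s/\alpha\}$, which is exactly the point where the irrationality of $\alpha$ enters. Everything else is routine bookkeeping with the additivity and monotonicity of $\delta$ and $\dim$.
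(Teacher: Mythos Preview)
Your proof is correct but proceeds by a genuinely different construction than the paper's. The paper applies Lemma~\ref{lem:1.25} \emph{once} with a very small $\epsilon=\lambda<\min\{\delta(B/A),\epsilon_n/n\}$ (where $\epsilon_n$ comes from Lemma~\ref{bound}) to obtain a single $D\supseteq B$, and then sets $D^{*}=D_1\oplus_B\cdots\oplus_B D_k$ as a free amalgam of $k$ copies of $D$ over $B$, where $k$ is chosen so that $k\gamma\leqslant\delta(B/A)<(k+1)\gamma$ for $\gamma=-\delta(D/B)$. Condition~(3) is then obtained indirectly: any $C$ with $|C\setminus B|<n$ meets fewer than $n$ of the copies nontrivially, each contributing at least $-\lambda$ to $\delta(C/B)$, so $\delta(C/B)>-n\lambda\geqslant-\epsilon_n$; the discreteness Lemma~\ref{bound} then forces $\delta(C/B)\geqslant 0$.

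By contrast, you build $D^{*}$ as a \emph{tower} $B=D_0\subsetneq D_1\subsetneq\cdots\subsetneq D_N$, strengthening the number-theoretic choice inside Lemma~\ref{lem:1.25} so that each layer has $\dim(D_j/D_{j-1})=s_j\geqslant n$ and drops $\delta$ by an amount in $(\eta,\epsilon)$. This lets you verify condition~(3) \emph{directly}: fewer than $n\leqslant s_j$ points in any layer are automatically independent over $D_{j-1}$ (hence over the smaller $C'_{j-1}$), so each summand $\delta(C'_j/C'_{j-1})=l_j(1-\alpha)\geqslant 0$. Your route thus avoids both the free amalgam over $B$ and the appeal to Lemma~\ref{bound}, at the cost of the sharper arithmetic input that $\{s/\alpha\}$ is dense in $(0,1)$ (equidistribution rather than mere density of $\mathbb{Z}+\alpha\mathbb{Z}$). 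Both arguments are sound; the paper's is more modular with respect to Lemma~\ref{lem:1.25}, while yours gives a cleaner and more self-contained verification of item~(3).
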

\begin{proof}
Consider a real number
$0<\lambda<\min \{\delta(B/A), \frac{1}{n}\epsilon_n\}$, where $\epsilon_{n}$ is as   in \Cref{bound}.
By \Cref{lem:1.25} there exists $D$ such that $-\lambda<\delta(D/B)<0$, $A\leqslant D$ and $D$ is transcendental  over $A$. Let $\gamma=-\delta(D/B)$, so $0<\gamma <\lambda$.
 Also let $k\geqslant 1$ be the natural number such that
\begin{align*}
k\gamma \leqslant \delta(B/A) <(k+1)\gamma.
\end{align*}
Let
 $D^{*}=D_1\oplus_B \ldots\oplus_B D_k$
 be obtained as the union of
 $k$ copies of $D$ (with the same colors) free from one another over $B$.
 We  first show that $A\leqslant D^{*}$ (hence $D^{*} \in \mathcal{K}_{\alpha}^{+}$) and $\delta(D^{*}/A)<\mu$.
\par
To show that $A\leqslant D^{*}$,
we need to show that
$\delta(C/A)\geqslant 0$ for all  $C$ with $A\subseteq C \subseteq D^{*}$. So let $B_{0}=C\cap B$ and
$C_{i}=C\cap D_{i}$ for each
$i<k$. There are two cases to consider:
\par
\noindent
\textbf{Case 1.}
$B_{0}=B$. \par
By \Cref{lem:1.25}, $\delta(D^{'}/B)> -\gamma$ for all $D^{'}$ that
$B\subseteq D^{'}\subseteq D$. So
$\delta(C_{i}/B)> -\gamma$ for each $i<k$. Therefore
\begin{align*}
\delta(C)=\delta(B)+\sum_{i<k}\delta(C_{i}/B)> \delta(B)-k\gamma \geqslant \delta(A).
\end{align*}
\textbf{Case 2.}
$B_{0}\neq B$.\par
Since
$A\leqslant B$, it follows that  $\delta(B_{0}/A)\geqslant 0$. Furthermore each  $C_{i}$ is proper subset of $D_{i}$ which means
$\delta(C_{i}/B_{0})\geqslant 0$ for all $i<k$ by \Cref{lem:1.25}. Hence
\begin{align*}
\delta(C/A)=\sum_{i<k}\delta(C_{i}/B_{0})+\delta(B_{0}/A)\geqslant 0.
\end{align*}
So in general
$A\leqslant D^{*}$ and hence
$D^{*}\in \mathcal{K}_{\alpha}^{+}$.
\par
Now it is easy to see that
$\delta(D^{*}/A)<\mu$, because
\begin{align*}
\delta(D^{*}/A)= \delta(D^{*}/B)+\delta(B/A)\leqslant \delta(B/A)-k\gamma <\gamma <\lambda <\mu.
\end{align*}
Finally we prove
that item 3 in the statement of the lemma is fulfilled.
 Let $C$ be subset of $D^{*}$ containing $B$  with $|C\setminus B|<n$.
Let
$C_{i}=C\cap D_{i}$ for each
$i<k$. Since $|C\setminus B|<n$ it follows that $|\{i;\hspace*{2mm}C_{i}\neq B\}|<n$. Hence
\begin{align*}
\delta(C)=\delta(B)+\sum_{i<k}\delta(C_{i}/B)\geqslant \delta(B)-n\lambda.
\end{align*}
So
$\delta(C/B)\geqslant -n\lambda\geqslant -\epsilon_{n}$. But \Cref{bound} implies that
$\delta(C/B)\geqslant 0$.
\end{proof}
\begin{lem}\label{lem:3.3.5}
($\alpha$ rational)
Assume that
$\alpha=\frac{m}{n}$ with $m, n$ co-prime and $0<m<n$.
Let $A\leqslant B$, $B$ be  transcendental over
$A$,
$\delta(B/A)>0$ and $t$ be  a natural number. Then there is
$D$ containing $B$ such that
\begin{enumerate}
\item
$A\leqslant D$ and $D$ is transcendental over $A$.
\item
$(B,D)$ is a minimal  pair, $|D\setminus B|>t$, and
$\delta(D/B)=-\frac{1}{n}$.
\end{enumerate}
\end{lem}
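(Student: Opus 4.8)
The plan is to adapt the construction of \Cref{lem:1.25} to the rational setting, where all predimensions lie in $\tfrac{1}{n}\mathbb{Z}$: I will adjoin to $B$ exactly $k$ new coloured points whose $\mathcal{L}$-dimension over $B$ is some $s<k$, with $s,k$ chosen so that $\delta(D/B)=s-\alpha k=-\tfrac{1}{n}$ (this makes $(B,D)$ a minimal pair), and I will place the algebraic part of the extension ``generically over $A$'' so that $A\leqslant D$ and $D$ remains transcendental over $A$.

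First I would fix the arithmetic. Since $\gcd(m,n)=1$, Bézout's identity yields infinitely many pairs of positive integers $(s,k)$ with $mk-ns=1$; each has $s<k$ because $m<n$, and choosing the solution large enough we may in addition assume $s\geqslant 1$ and $k>t$, so that $s-\alpha k=\tfrac{ns-mk}{n}=-\tfrac{1}{n}$. For the construction itself, working inside a sufficiently saturated model of $T$ extending $\langle B\rangle$, I would pick $e_1,\dots,e_s$ transcendental and $\dim$-independent over $B$, and then apply \Cref{lem:1.8} (this is where indecomposability enters), with base point $B$ and independent set $\{e_1,\dots,e_s\}$, to obtain $e_{s+1},\dots,e_k\in\acl(Be_1\dots e_s)$ such that every $s$-element subset of $\{e_1,\dots,e_k\}$ is an $\acl$-base of $\acl(Be_1\dots e_s)$ over $B$. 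The one extra demand I would impose is that $e_{s+1}\notin\acl(C_0e_1\dots e_s)$ for every $C_0$ with $A\subseteq C_0\subsetneq B$ and $\dim(C_0)<\dim(B)$; this is possible because there are only finitely many such $C_0$ and each $\acl(C_0e_1\dots e_s)$ is a proper-dimensional $\acl$-closed subset of $\acl(Be_1\dots e_s)$, so indecomposability keeps the choice set non-empty. Finally set $D=B\cup\{e_1,\dots,e_k\}$, colour all of $e_1,\dots,e_k$, retain the colours of $B$, and leave the rest non-coloured.

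The easy verifications come next: each $e_i$ lies in an $\acl$-base of $\acl(Be_1\dots e_s)$, which has dimension $s\geqslant1$ over $B$, so $\dim(e_i/B)=1$ and hence $\dim(e_i/A)=1$; together with $B$ transcendental over $A$ this yields that $D$ is transcendental over $A$ as soon as $A\leqslant D$ is established. For the minimal-pair claim, $\delta(D/B)=s-\alpha k=-\tfrac{1}{n}<0$, while for $B\subseteq C'\subsetneq D$ with $|C'\setminus B|=l$ one computes $\delta(C'/B)=\min(l,s)-\alpha l$, which is $l(1-\alpha)\geqslant0$ if $l\leqslant s$ and $\geqslant s-\alpha(k-1)=\tfrac{m-1}{n}\geqslant0$ if $s<l<k$; hence $(B,D)$ is a minimal pair with $|D\setminus B|=k>t$ and the required value $\delta(D/B)=-\tfrac1n$.

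The real work is $A\leqslant D$, i.e.\ $\delta(C/A)\geqslant 0$ for all $A\subseteq C\subseteq D$. If $C$ omits some $e_j$, then $B\subseteq D\setminus\{e_j\}\subsetneq D$, so $B\leqslant D\setminus\{e_j\}$ by minimality of $(B,D)$, and transitivity of $\leqslant$ (\Cref{mol:5}) combined with $A\leqslant B$ gives $A\leqslant D\setminus\{e_j\}$, hence $\delta(C/A)\geqslant 0$. Otherwise $C=C_0\cup\{e_1,\dots,e_k\}$ with $A\subseteq C_0\subseteq B$, and additivity together with $\{e_1,\dots,e_k\}\subseteq\acl(Be_1\dots e_s)$ and $\dim(\{e_1,\dots,e_k\}/B)=s$ gives
\[\delta(C/A)=\delta(C_0/A)-\tfrac1n+\bigl(\dim(B/C_0)-\dim(B/C_0e_1\dots e_k)\bigr).\]
If $C_0=B$ this equals $\delta(B/A)-\tfrac1n\geqslant0$, since $\delta(B/A)>0$ lies in $\tfrac{1}{n}\mathbb{Z}$ and is therefore $\geqslant\tfrac1n$. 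If $\dim(C_0)=\dim(B)$ (so $\acl(C_0)=\acl(B)$) then $\delta(C_0/A)\geqslant\delta(B/A)\geqslant\tfrac1n$ and the parenthesis is $\geqslant0$. If $\dim(C_0)<\dim(B)$ then the special choice of $e_{s+1}$ forces $\dim(\{e_1,\dots,e_k\}/C_0)\geqslant s+1$, so the parenthesis is $\geqslant1$ and $\delta(C/A)\geqslant1-\tfrac1n>0$. Thus $A\leqslant D$, and in particular $D\in\mathcal{K}_\alpha^+$, finishing the proof. I expect this last point --- forcing the algebraic part $\{e_{s+1},\dots,e_k\}$ to genuinely depend on all of $B\setminus A$ rather than merely on an $\acl$-base of $B$ --- to be the main obstacle, since it is exactly what the rational $\tfrac1n$-gap requires and it goes slightly beyond the verbatim argument of \Cref{lem:1.25}.
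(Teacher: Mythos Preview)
Your proof is correct and follows the same template as the paper's: adjoin $k$ coloured points spanning an $s$-dimensional extension of $B$, with $(s,k)$ chosen so that $s-\alpha k=-\tfrac1n$, and verify the minimal-pair and closedness conditions as in \Cref{lem:1.25}.

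Two points of comparison are worth noting. First, your arithmetic is cleaner: you take any B\'ezout solution $mk-ns=1$ with $k>t$ and $s\geqslant1$, whereas the paper manufactures $(s,k)$ from a solution of $m^{t+1}k'-1=s'n^{t+1}$ and sets $s=s'n^t$, $k=k'm^t$. Both give $s-\alpha k=-\tfrac1n$ and $k>t$, but your route is more direct.

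Second, and more substantively, your verification of $A\leqslant D$ is more careful than what the paper records. The paper simply refers back to the proof of \Cref{lem:1.25}, which handles the case where the test set $C$ omits some $d_j$ (exactly your Case~1 via $A\leqslant B\leqslant D\setminus\{d_j\}$) but is terse about the case $C=C_0\cup\{d_1,\dots,d_k\}$ with $A\subseteq C_0\subsetneq B$ and $\dim(C_0)<\dim(B)$. There one genuinely needs $\dim(\{d_1,\dots,d_k\}/C_0)\geqslant s+1$, and your extra requirement $e_{s+1}\notin\acl(C_0e_1\dots e_s)$ for every such $C_0$ is precisely what secures this. Since $B$ is finite there are only finitely many such $C_0$, each giving a proper-dimensional $\acl$-closed subset of $\acl(Be_1\dots e_s)$, so indecomposability lets you impose this alongside the conditions of \Cref{lem:1.8}. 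This is a genuine refinement of the paper's construction, not a deviation from its strategy; the paper presumably intends the same genericity but does not spell it out.
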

\begin{proof}
Similar to the proof of \Cref{lem:1.25} we  introduce suitable
$s<k$   and
 $D=Bd_{1},...,d_{k}$
such that
$p(d_{i})$ for all $i\leqslant k$,
and
each of the $s$-element subsets of $\{d_1,\ldots,d_k\}$
together with $B$ forms
 a basis for
$D$.
\par
Let
$1\leqslant k^{'},s^{'}<n^{t+1}$
  such that
$m^{t+1}k^{'}-1=s^{'}n^{t+1}$ witness the fact that
 $(m^{t+1},n^{t+1})=1$.  Put
$s^{'}n^{t}=s$
and
$k^{'}m^{t}=k$.  Then $\delta(D/B)=s-\frac{m}{n}k=-\frac{1}{n}$.
\par
The other requirements on $D$ can be verified in a similar way to
 the proof of \Cref{lem:1.25}.
\end{proof}
\begin{lem}\label{lem:3.3.6}
($\alpha$ rational)
Under  the assumptions of the previous lemma,
there is a
 transcendental extension $D^{*}$ of $A$ such that
$\delta(D^{*}/A)=0$ and
 for each $D^{'}$ containing $B$ with
$|D^{'}\setminus B|<t$ we have
$B\leqslant D^{'}$.
\end{lem}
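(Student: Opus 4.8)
The plan is to transpose the proof of Lemma~\ref{lem:3.3.4} to the rational setting, feeding it the output of Lemma~\ref{lem:3.3.5} in place of Lemma~\ref{lem:1.25}. First I would record that, since $\dim$ is integer-valued and $\alpha=m/n$, the pre-dimension $\delta$ is $\tfrac1n\mathbb{Z}$-valued; as $\delta(B/A)>0$, this lets us write $\delta(B/A)=k/n$ for a unique integer $k\geqslant 1$. Next, apply Lemma~\ref{lem:3.3.5} to the data $A\leqslant B$, $B$ transcendental over $A$, $\delta(B/A)>0$, and the given $t$, obtaining a set $D\supseteq B$ with $A\leqslant D$, $D$ transcendental over $A$, $(B,D)$ a minimal pair, $|D\setminus B|>t$ and $\delta(D/B)=-\tfrac1n$. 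I would then take $D^{*}=D_{1}\oplus_{B}\cdots\oplus_{B}D_{k}$, the free amalgam over $B$ (with identical colorings) of $k$ copies of $D$; writing $D_{<i}=D_{1}\cup\cdots\cup D_{i-1}$, freeness over $B$ gives $\delta(D_{i}/B\,D_{<i})=\delta(D_{i}/B)=-\tfrac1n$, so $\delta(D^{*}/B)=-k/n$ and hence $\delta(D^{*}/A)=\delta(D^{*}/B)+\delta(B/A)=0$.

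Granting for the moment that $A\leqslant D^{*}$ (so that $D^{*}\in\mathcal{K}^{+}_{\alpha}$), the remaining points are routine. The identity just displayed gives $\delta(D^{*}/A)=0$. Transcendence of $D^{*}$ over $A$ holds because every element of $D^{*}\setminus A$ lies in some $D_{i}\setminus A$, where it has $\mathcal{L}$-dimension $1$ over $A$ (as $D$ is transcendental over $A$), and $\dim$ does not depend on the ambient model by quantifier elimination. For the last clause, suppose $B\subseteq D'\subseteq D^{*}$ with $|D'\setminus B|<t$. For each $i$ the set $D'\cap D_{i}$ contains $B$, and since $|(D'\cap D_{i})\setminus B|\leqslant|D'\setminus B|<t<|D_{i}\setminus B|$ it is a \emph{proper} subset of $D_{i}$; minimality of $(B,D)$ then gives $B\leqslant D'\cap D_{i}$. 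Freeness of the copies over $B$ promotes this to $B\leqslant D'$: for any $B\subseteq E\subseteq D'$ one has $\delta(E/B)=\sum_{i}\delta\bigl(E\cap D_{i}\,/\,B\,(E\cap D_{<i})\bigr)=\sum_{i}\delta(E\cap D_{i}/B)\geqslant 0$.

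So everything reduces to proving $A\leqslant D^{*}$, which I expect to be the heart of the argument. Fix $A\subseteq C\subseteq D^{*}$ and set $B_{0}=C\cap B$, $C_{i}=C\cap D_{i}$; I would distinguish the two cases of Lemma~\ref{lem:3.3.4}. When $B_{0}=B$ each $C_{i}$ contains $B$, so $\delta(C_{i}/B)\geqslant-\tfrac1n$ with equality only if $C_{i}=D_{i}$ (by minimality of $(B,D)$); as there are $k$ copies, $\delta(C)=\delta(B)+\sum_{i}\delta(C_{i}/B)\geqslant\delta(B)-k/n=\delta(A)\geqslant 0$. The delicate case is $B_{0}\subsetneq B$: then each $C_{i}$ is a proper subset of $D_{i}$, and one must still show $\delta(C)\geqslant\delta(A)$.

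The hard part is precisely this last case, and I do not expect it to be automatic. The naive decomposition $\delta(C/A)=\sum_{i}\delta(C_{i}/B_{0})+\delta(B_{0}/A)$ is not valid, because the $C_{i}$ are $\dim$-independent only over the full $B$ and typically become $\dim$-dependent over the smaller set $B_{0}$. Instead one has to estimate $\delta(C)$ more carefully, routing through: (i) $A\leqslant D$, hence $A\leqslant D_{i}$ for every copy, supplied by Lemma~\ref{lem:3.3.5}(1); (ii) the transcendence of $B$ over $A$, which controls $\dim(B/B_{0})$ against the color count $|p(B/B_{0})|$; and (iii) the rigidity of the minimal pair $(B,D)$, which bounds below how much of $B$ a full copy of $D\setminus B$ pins down. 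This is exactly the (very terse) Case~2 of the proof of Lemma~\ref{lem:3.3.4}; making the bookkeeping precise is where the real work sits. Once $A\leqslant D^{*}$ is established, combining it with $\delta(B_{0}/A)\geqslant 0$ closes the case, and the lemma follows.
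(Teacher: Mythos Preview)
Your construction is exactly the paper's: write $\delta(B/A)=k/n$, take $D$ from Lemma~\ref{lem:3.3.5}, and set $D^{*}$ equal to the free amalgam over $B$ of $k$ copies of $D$. (The paper's one-line proof says ``over $A$'', but that cannot be right---amalgamating over $A$ gives $\delta(D^{*}/A)=k(k-1)/n$, not $0$---so your ``over $B$'' is the intended reading, matching Lemma~\ref{lem:3.3.4}.) Your checks that $\delta(D^{*}/A)=0$, that $D^{*}$ is transcendental over $A$, and that $B\leqslant D'$ whenever $|D'\setminus B|<t$ are correct and more explicit than anything the paper writes down.

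On Case~2 of $A\leqslant D^{*}$: you are right that the $C_{i}$'s are only guaranteed $\dim$-free over $B$, not over $B_{0}$, so the displayed identity $\delta(C/A)=\sum_i\delta(C_i/B_0)+\delta(B_0/A)$ in the paper's Case~2 of Lemma~\ref{lem:3.3.4} is in general only the inequality $\leqslant$, which points the wrong way. You are also right that ``$C_i\subsetneq D_i$'' does not force $S_i:=C_i\cap(D_i\setminus B)$ to be a proper subset of $D_i\setminus B$ (one can have $B_0\subsetneq B$ with $S_i$ full), so the paper's appeal to minimality for $\delta(C_i/B_0)\geqslant 0$ is not immediate. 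This is a genuine subtlety you have spotted in the paper's own argument, not a defect peculiar to your write-up.

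That said, you do not close the gap: you list ingredients (i)--(iii) and then defer the ``bookkeeping''. Since the paper's proof of the present lemma is literally a pointer back to Lemma~\ref{lem:3.3.4}, your proposal is already at least as complete as the paper's; but if you want a self-contained argument you will have to carry out Case~2 explicitly, and the route you sketch (tracking how many copies $S_i$ are full and how much of $B$ each full copy pins down in $\acl$) is the right one to pursue.
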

\begin{proof}
Suppose that $\delta(B/A)=\frac{p}{n}$.
Similar to  the proof of \Cref{lem:3.3.4} we consider $D^{*}$  to be a
disjoint free
union of  $p$-copies of $D$ over $A$ and show that $D^{*}$ has the desired properties.
\end{proof}
\subsection{Completeness}
We now turn to the theory
$\mathbb{T}_\alpha$
and showing that it is
 complete.
\subsubsection{$\alpha$ irrational}
 In this situation to prove the completeness of $\mathbb{T}_{\alpha}$
we
adopt the notion of semi-genericity as appears in
\cite{BS97}.
\par
Recall the  notation from subsection $2.2$ that for a set $A\in \mathcal{K}_{\alpha}^{+} $  an embedding $f:A\to \mathbb{M}$ is called $\mathcal{L}_{p}$-embedding if $\qftp_{\mathcal{L}}(f(A))=\qftp_{\mathcal{L}}(A)$ and  for each $x\in A$, $x$ and  $f(x)$ have the same color.
\begin{dfn}
\label{sem-gen}
An $ \mathcal{L}_p $-structure $ M\in \mathcal{K}_{\alpha}^{+} $ is called semi-generic, if
\begin{enumerate}
\item
$ M\models T $, and
\item
whenever
$A\leqslant B$, $B$ is transcendental over
$A$ and
$f:A\to  M$ is an
$\mathcal{L}_p$-embedding  then for each natural number
$n$ there exists an $\mathcal{L}_{p}$-embedding
$\hat{f}_n: B \to  M$ extending
$f$ and such that
\begin{align*}
\cl^{n}_{M}(\hat{f}_n(B))=\hat{f}_n(B)\oplus_{f( A)} \cl^{n}_{M}(f(A)).  \quad (*)
\end{align*}
\end{enumerate}
\end{dfn}
The proof of the following lemma is inspired by \cite{L07}, Proposition 4.4. .
\begin{thm}\label{lem:3.14}
Each
$\omega_{1}$-saturated model of
$\mathbb{T}_{\alpha}$ is  semi-generic.
\end{thm}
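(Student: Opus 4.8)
The plan is to verify the two clauses of \Cref{sem-gen} for an $\omega_{1}$-saturated model $\mathbb{M}$ of $\mathbb{T}_{\alpha}$. Clause~1 is immediate from axiom~1 of \Cref{def-of-t-alpha}. For clause~2, fix $A\leqslant B$ with $B$ transcendental over $A$, an $\mathcal{L}_{p}$-embedding $f\colon A\to\mathbb{M}$, and $n\in\omega$. First I would reduce to the case in which $B$ is finite over $A$: the relevant conditions have finite character, algebraic steps are free of charge by \Cref{rem:2.19}, and transcendental steps can be composed along a $\leqslant$-chain using transitivity of the free amalgam; this reduction is in any case forced, since \Cref{lem:3.3.4}, which I apply below, produces a finite set. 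Put $A'=f(A)$ and $P=\cl^{n}_{\mathbb{M}}(A')$; by \Cref{rem:2.17}(1), whose proof runs through the $\Delta$-system \Cref{lem:1.22}, the set $P$ is finite. Note also that for irrational $\alpha$ the hypothesis $\delta(B/A)>0$ is automatic once $B\supsetneq A$, since $\dim(B/A)=\alpha\lvert p(B/A)\rvert$ is impossible.

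Since $A\leqslant B$, the free amalgam $B^{+}:=P\oplus_{A'}B$ over the common copy of $A$ (identified with $A'\subseteq P$) satisfies $P\leqslant B^{+}$, and $B^{+}$ is transcendental over $P$ because $\dim$-freeness preserves the value $1$ of $\dim(b/A)$ for $b\in B\setminus A$. Next I would fix $m$ so large that every configuration met below has fewer than $m$ new points over its base, take $\epsilon_{m}$ as in \Cref{bound}, and apply \Cref{lem:3.3.4} to the pair $P\leqslant B^{+}$ with the number $n$ and with some $\mu<\tfrac{1}{m}\epsilon_{m}$. This yields a finite $D^{*}\supseteq B^{+}$ with $P\leqslant D^{*}$, $D^{*}$ transcendental over $P$, $\delta(D^{*}/P)<\mu$, and $B^{+}\leqslant C$ whenever $B^{+}\subseteq C\subseteq D^{*}$ and $\lvert C\setminus B^{+}\rvert<n$. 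In words, $D^{*}$ is a finite transcendental extension of $P$ of arbitrarily small pre-dimension inside which $B^{+}$ (hence $B$) has no cheap small extension.

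Then I would realise $D^{*}$ inside $\mathbb{M}$ over $P$. Enumerate $P$ by $\bar a$ and $D^{*}\setminus P$ by $\bar d$, set $k=\dim(\bar d/\bar a)$, and, using \Cref{existance-of-strong}, choose a strong formula $\varphi(\bar x;\bar y)$ with respect to $\bar d,\bar a$ which moreover $\bigvee$-bounds $\dim(\bar x/\bar y)$ by $k$ and implies $\bar x\cap\acl(\bar y)=\emptyset$. Because $P$ realises its own quantifier-free $\mathcal{L}$-type and $\bar d$ witnesses the matrices of $D_{\varphi,k}$ and $d_{\varphi,k}$ over $\bar a$, we get $\mathbb{M}\models D_{\varphi,k}(\bar a)\wedge d_{\varphi,k}(\bar a)$, so axiom~3 of \Cref{def-of-t-alpha} supplies a tuple satisfying $\varphi(\bar x,\bar a)$ with exactly the colours of $\bar d$. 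Replacing $\varphi$ by its conjunction with any finite part of $\qftp_{\mathcal{L}}(\bar d/\bar a)$, which remains strong by \Cref{existance-of-strong}(2), and with the $\bigvee$-definable formulas of \Cref{fact:1.1} expressing $\mathcal{L}$-freeness over $P$ from finitely many parameters of $\cl_{\mathbb{M}}(A')$, and appealing to \Cref{obs:1.7} to meet the latter, one sees that the $\mathcal{L}_{p}$-type over $\cl_{\mathbb{M}}(A')$ asserting ``$\qftp_{\mathcal{L}}(\bar d/\bar a)$, the colours of $\bar d$, and $\mathcal{L}$-freeness from $\cl_{\mathbb{M}}(A')$ over $P$'' is finitely satisfiable in $\mathbb{M}$. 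By $\omega_{1}$-saturation (and $\lvert\cl_{\mathbb{M}}(A')\rvert\leqslant\aleph_{0}$) this type is realised, giving an $\mathcal{L}_{p}$-embedding $g\colon D^{*}\to\mathbb{M}$ fixing $P$ pointwise with $g(D^{*})$ free from $\cl_{\mathbb{M}}(A')$ over $P$. Put $\hat f_{n}:=g\restriction B$; since the copy of $A$ inside $B$ is $A'$ and $g$ fixes $P\supseteq A'$, the map $\hat f_{n}$ extends $f$. Write $B':=\hat f_{n}(B)$, so that $g(B^{+})=P\cup B'=B'\oplus_{A'}P$ and $B'\cap P=A'$.

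Finally I would verify $(*)$, i.e.\ $\cl^{n}_{\mathbb{M}}(B')=B'\oplus_{A'}P$. For ``$\supseteq$'': if $A'\leqslant_{i}F$ with $\lvert F\setminus A'\rvert<n$ then $F\subseteq P$, and since $B'$ is free from $P$ over $A'$ it is free from $F$ over $A'$; a short submodular computation using $A'\leqslant_{i}F$ then gives $B'\leqslant_{i}B'\cup F$ with $\lvert(B'\cup F)\setminus B'\rvert<n$, whence $F\subseteq\cl^{n}_{\mathbb{M}}(B')$ and so $P\subseteq\cl^{n}_{\mathbb{M}}(B')$. For ``$\subseteq$'': assume $E\supsetneq B'$ with $B'\leqslant_{i}E$ and $\lvert E\setminus B'\rvert<n$, so $\delta(E/B')<0$, indeed $\delta(E/B')\leqslant-\epsilon_{n}$ by \Cref{bound}; decomposing $E$ along $g(D^{*})$ and $\cl_{\mathbb{M}}(A')$ and expanding $\delta(E/B')$ by the chain rule, the part of $E$ inside $g(D^{*})$ contributes $\geqslant 0$ by the shielding property of $D^{*}$ (here one uses $P\subseteq g(D^{*})$, so the relevant set contains $B'\cup P=g(B^{+})$), the part inside $\cl_{\mathbb{M}}(A')\leqslant\mathbb{M}$ contributes $\geqslant 0$ by freeness, and the remaining ``external'' part is squeezed out using $\delta(D^{*}/P)<\mu$ together with \Cref{bound} and \Cref{lem:1.22}; this forces $E\subseteq B'\cup P$. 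I expect this last step to be the main obstacle: showing that no cheap small extension of $B'$ escapes $B'\oplus_{A'}P$ is precisely where the arbitrarily small pre-dimension of $D^{*}$ over $P$ (\Cref{lem:3.3.4}), the uniform deficiency gap of \Cref{bound}, and the $\Delta$-system \Cref{lem:1.22} have to be combined with care, while \Cref{lem:1.25} enters only through the proof of \Cref{lem:3.3.4}.
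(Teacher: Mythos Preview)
Your overall architecture matches the paper's: build a $D^{*}$ via \Cref{lem:3.3.4} with small $\delta(D^{*}/\text{base})$ and an $n$-shielding property, realise it in $\mathbb{M}$ using axiom~3 and $\omega_{1}$-saturation, and then argue that $\cl^{n}$ of the image of $B$ cannot escape. The serious problem is in that last step, and it is exactly the point you flag as ``the main obstacle''.

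You produce \emph{one} $\mathcal{L}_{p}$-embedding $g\colon D^{*}\to\mathbb{M}$ and then try to rule out an intrinsic extension $E$ of $B'$ with $\lvert E\setminus B'\rvert<n$ and $E\nsubseteq B'\cup P$. Your decomposition of $\delta(E/B')$ along $g(D^{*})$ and $\cl_{\mathbb{M}}(A')$ does not close: the shielding property from \Cref{lem:3.3.4} is for $B^{+}=B'\cup P$, not for $B'$, so the ``$g(D^{*})$-part'' need not contribute $\geqslant 0$ over $B'$; and for the ``external'' part you invoke \Cref{lem:1.22}, but that lemma requires an infinite family of bounded-size sets, whereas you have a single $E$ and a single $g(D^{*})$. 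There is nothing to feed into the $\Delta$-system argument, and no substitute is offered. Nothing so far prevents $g(D^{*})$ from failing to be closed in $\mathbb{M}$, so a bad $E$ escaping $g(D^{*})\cup\cl_{\mathbb{M}}(A')$ is entirely possible for a single realisation.

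The paper closes this gap by a multiplicity argument you have essentially skipped: after getting one $\mathcal{L}$-copy $h(D^{*})$ free from $\cl(f(A))$ via \Cref{obs:1.7}, it applies axiom~3 not to $D^{*}$ itself but to the free union $\mathcal{D}'_{m}=D'_{1}\oplus_{A'}\cdots\oplus_{A'}D'_{m}$ (which is still transcendental over the base), and lets $m\to\infty$ with saturation to obtain infinitely many pairwise-disjoint $\mathcal{L}_{p}$-copies $D^{*}_{i}$ over $f(A)$, each free from $\cl(f(A))$. Now if infinitely many indices $i$ were bad, one sets $H_{i}=D^{*}_{i}\cup C_{i}$ (with $C_{i}$ a witnessing small intrinsic extension of $B_{i}$), notes $\lvert H_{i}\rvert$ is uniformly bounded, and \emph{only then} applies \Cref{lem:1.22} to the family $\{H_{i}\}$ to find a large subfamily with common closed intersection $F$. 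The computation $\delta(H_{i}/F\cap\cl(f(A)))=\delta(D^{*}_{i}/A')+\delta(C_{i}/D^{*}_{i})<\epsilon-\epsilon_{n}<0$ then contradicts $F\cap\cl(f(A))\leqslant H_{i}$. So the role of \Cref{lem:1.22} is to organise many potentially bad copies, not to analyse a single one; without the infinitely many copies your appeal to it has no content. A secondary point: the paper applies \Cref{lem:3.3.4} to the abstract pair $A\leqslant B$ rather than to $P\leqslant B^{+}$; this avoids the complication that $P=\cl^{n}_{\mathbb{M}}(A')$ is in general \emph{not} closed in $\mathbb{M}$, which would undermine several of your freeness and nonnegativity claims over $P$.
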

\begin{proof}
We find it beneficial to provide a
sketch of the proof beforehand.
Assuming that $A,B$ are as in item 2
in \Cref{sem-gen}, we use
the axioms and saturation to find infinitely-many mutually-free copies over $A$
of a set
$D^*$  as in \Cref{lem:3.3.4},
whose pre-dimension is close enough to that of $A$.
We will then show that there are only finitely many of these copies in which the
corresponding image of $B$ violates
$(*)$ in \Cref{sem-gen}, and therefore
there is a copy with the desired
property.
\par
Now,
assume that $\mathbb{M}$ is an
$\omega_{1}$-saturated model of $\mathbb{T}_\alpha$,
$A\leqslant  B $,
$B$ is transcendental over
$A$ and
$f:A\to  \mathbb{M}$ is an $\mathcal{L}_{p}$-embedding. Let $n$ be a  given natural number,
$\epsilon_{n}$ be as in \Cref{bound} and
 $0<\epsilon<\epsilon_{n}$. By \Cref{{lem:3.3.4}} there exists $D^{*}\in \mathcal{K}_{\alpha}^{+}$ extending
$ B$ such that
$A\leqslant D^{*}$, $D^{*}$ is transcendental over $A$ and
  $\delta(D^{*}/A)<\epsilon$. Moreover as in the mentioned lemma, $B\leqslant_{i}D^{*}$ and for every
  $B\subseteq C\subseteq D^{*}$ with  $|C\setminus B|<n$ we have
  $B\leqslant C$.
Now by applying
\Cref{obs:1.7} to $X=f(A)$, $Y=  \cl(f(A))$, and
$\Sigma(\bar{x})=\{\phi(\bar{x},f(\bar{a})):\hspace*{3mm}\phi(\bar{x},\bar{a})\in \qftp_{\mathcal{L}}(D^{*}/A)\}$ regarding the fact that
 $D^{*}$ is transcendental over $A$,
  there exists an $\mathcal{L}$-embedding $h:D^{*} \to \mathbb{M}$ such that $h(D^{*})\cap  \cl(f(A))=f(A)$ and $h(D^{*})$  and $ \cl(f(A))$ are free over $f(A)$.
\par
Now for  a finite subset $A^{'}$ of $\cl(f(A))$ including $f(A)$ we let
$D^{'}= A^{'}\oplus_{f(A)} h(D^{*})$.
We  color  the $\mathcal{L}$-structure $ \langle D^{'} \rangle_{\mathbb{M}}$ by letting $h(D^{*})$ have the same color as $D^{*}$,  while keeping the color of   $ A^{'}$  and leaving the rest non-colored.  Denote the  resulting $\mathcal{L}_{p}$-structure by $N$. Then    $D^{'}\in \mathcal{K}_{\alpha}^{+}$, since  $N\in \mathcal{K}_{\alpha}^{+}$.  Also $A^{'}\leqslant D^{'}$ and
$D^{'}$ is transcendental over $A^{'}$.
\par
Let $\{D'_i\}$, indexed by the natural numbers, be
disjoint copies of $D$ mutually free over $A$.
For each natural number
$m$
let
$\mathcal{D}^{'}_{m}=D^{'}_{1}\oplus_A \ldots \oplus_A D^{'}_m$ be  the free union of
$m$ disjoint
copies of
$D^{'}$
over
$A^{'}$ (so $D^{'}_{i}\cong_{A^{'}} D^{'}$). Axioms of  $\mathbb{T}_{\alpha}$  (Axiom 3) and $\omega_{1}$-saturation of $\mathbb{M}$ ensure that there exists a copy of $\mathcal{D}^{'}_{m}$
over $A^{'}$, since $\mathcal{D}^{'}_{m}$ is transcendental over $A^{'}$. Thus for each $m$ there are $m$ copies of $D^{'}$ disjoint over $f(A)$ satisfying  $\qftp_{\mathcal{L}}(h(D^{'})/A^{'})$. Hence by $\omega_{1}$-saturation there are infinity many  copies of $D^{*}$ disjoint over $f(A)$ satisfying  $\qftp_{\mathcal{L}}(h(D^{*})/\cl(f(A)))$.
 \par
Let
  $g_{i}:D^{'}\to \mathbb{M}$ be the  embedding that represents the $i$'th copy of
  $D^{'}$ over $f(A)$,
  $B_{i} =g_{i}( B)$
  and
  $ D^{*}_{i} =g_{i}( D^{'})$. By the construction,
  $D^{*}_{i}\forkindep^{\dim}_{f(A)}\cl(f(A))$ and consequently $B_{i}\forkindep^{\dim}_{f(A)}\cl(f(A))$.
\par
Let
$ Z=\{i\in \omega :\hspace*{3mm}  \cl^{n}(B_{i})\nsubseteq \cl(f(A))  \oplus_{f(A)}B_{i}\} $.
Under the assumption that
$Z$ is finite, the rest of the proof goes as follows.
 Pick some $i\not \in Z$.
  We claim  that
$\cl^{n}(B_{i})=\cl^{n}(f(A))\oplus_{f(A)} B_{i}$,
that is the condition of semi-genericity is satisfied
by $\hat{f}_n=g_i$.
 Suppose that  $E\subseteq \mathbb{M}$
 such that
$B_{i}\leqslant_{i} E$  and
$|E-B_{i}|<n$.
Then
$E\subseteq \cl(f(A))\oplus _{f(A)}B_{i}$.
If
$E_{1}=E\setminus  B_{i}$,
then
$E_{1}\subseteq \cl(f(A))$
and
$E_{1}\forkindep^{\dim}_{f(A)}B_{i}$. Therefore $f(A)\leqslant_{i} f(A)\cup E_{1}$ and $E_{1}\subseteq \cl^{n}(f(A))$. Hence
$E\subseteq \cl^{n}(f(A))\oplus _{f(A)}B_{i}$ and  $\cl^{n}(B_{i})=\cl^{n}(f(A))\oplus _{f(A)}B_{i}$.
\par
Now it remains to show that
the assumption that
  $Z$ is infinite leads to a contradiction.
   For  $i\in Z$,  denote by $C_{i}\subseteq  \mathbb{M}$ an intrinsic
  extension of
  $B_{i}$
not included in $\cl(f(A))\oplus _{f(A)}B_{i}$ with $|C_{i}\setminus B_{i}|<n$.
 Put
    $H_{i}=D^{*}_{i}\cup C_{i}$ and choose a natural number
    $s$  such that
    $|H_{i}|<s$ for all
    $i\in Z$. As every subset of $D^{*}_{i}$ that includes $B_{i}$ and has  less than $n$ more elements than $B_{i}$ is closed in $D^{*}_{i}$, we have  $C_{i}\nsubseteq D^{*}_{i}$.
    \par
 By Lemma
    \ref{lem:1.22} there are an arbitrary large finite set
    $\Omega\subseteq Z$
and a set
   $F$ such that
   $H_{i}\cap H_{j}=F$
   and
   $F\leqslant H_{i}$  for
  all $ i\neq j\in \Omega$.     If $A^{'}=F\cap \cl(f(A))$ then $A^{'}\leqslant F$ and
   by transitivity
   $A^{'}\leqslant H_{i}$ and
   $\delta(H_{i}/A^{'})\geqslant 0$.
   \par
On the other hand  we have that
   $ C_{i}\setminus D^{*}_{i}\neq \emptyset$. So
  \begin{align*}
  \delta(H_{i}/A^{'})=\delta(D^{*}_{i}\cup C_{i}/A^{'})&=\delta(D^{*}_{i}/A^{'})+\delta(C_{i}/D^{*}_{i}).
  \end{align*}
  Notice that
  $\delta(C_{i}/D^{*}_{i})<0$. Hence as we have $|C_{i}\setminus D^{*}_{i}|<n$, it follows that $\delta(C_{i}/D^{*}_{i})<-\epsilon_{n}<-\epsilon$.
  Furthermore as $D^{*}_{i}\forkindep^{\dim}_{f(A)}\cl(f(A))$, we have that
  $\delta(D^{*}_{i}/A^{'})=\delta(D^{*}_{i}/A)<\epsilon$.
 Hence
 $\delta(H_{i}/A^{'})<0$, a contradiction.
\end{proof}
\begin{dfn}
By $X\stackrel{w}{\cong} Y$,
read as $X$ is weakly isomorphic to $Y$,
we mean that there is an
$\mathcal{L}$-isomorphism $f:\langle X\rangle\to \langle Y\rangle$
mapping $X$ to $Y$ and such that $p(x)\leftrightarrow p(f(x))$ for
all $x\in X$.
\end{dfn}
\begin{rem}\label{wiso}
Notice that the above notion of
isomorphism is weaker than
$\langle X \rangle\cong _{\mathcal{L}_p} \langle Y\rangle$, since the colours are preserved only for elements of $X$.
Weak isomorphism of $X$ and $Y$, as in the definition above, also
implies that for every $A\subseteq X$ and natural number $i$, if $\cl^i_{\langle X\rangle}(A)\subseteq X$ and  $\cl^i_{\langle Y\rangle}(f(A))\subseteq Y$ then $f(\cl^i_{\langle X\rangle}(A))=\cl^i_{\langle Y\rangle}(f(A))$ and therefore $\cl^i_{\langle X\rangle}(A)\stackrel{w}{\cong} \cl^i_{\langle Y\rangle}(f(A))$.
\end{rem}
\begin{thm}\label{lem:3.15}
Let
$\mathbb{M}_{1}$ and $\mathbb{M}_{2}$ be two $\omega_{1}$-saturated  models of
$\mathbb{T}_{\alpha}$
and $\phi(\bar{x})$ an $\mathcal{L}_p$-formula.
Then
there exists
$n=n_{\varphi}$ such that for
all $\bar{a}_1\in \mathbb{M}_1^{|\bar{a}_{1}|}$
and $\bar{a}_2\in \mathbb{M}_2^{|\bar{a}_{2}|}$,
\begin{align*}
 \cl^n_{\mathbb{M}_1}(\bar{a}_1) \stackrel{w}{\cong}\cl^n_{\mathbb{M}_2}(\bar{a}_2)\Rightarrow
\big (\mathbb{M}_{1}\models \varphi(\bar{a}_{1}) \Leftrightarrow \mathbb{M}_{2}\models \varphi(\bar{a}_{2})\big).
\end{align*}
\end{thm}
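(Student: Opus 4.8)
The plan is to argue by induction on the complexity of the $\mathcal{L}_p$-formula $\phi(\bar x)$, with $n_{\neg\psi}:=n_\psi$ and $n_{\psi_1\wedge\psi_2}:=n_{\psi_1\vee\psi_2}:=\max(n_{\psi_1},n_{\psi_2})$, so that only the atomic case and the existential quantifier require work. For atomic $\phi$ one takes $n_\phi:=1$: since $\bar a_i\subseteq\cl^1_{\mathbb{M}_i}(\bar a_i)$, any weak isomorphism between the two $\cl^1$'s carries $\bar a_1$ to $\bar a_2$ preserving $\qftp_{\mathcal L}$ and the colours of the entries, and by quantifier elimination for $T$ this already decides every atomic $\mathcal L_p$-formula in $\bar x$ (the $\mathcal L$-atomic ones and the $p(x_i)$'s).

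The substantial case is $\phi(\bar x)=\exists y\,\psi(\bar x,y)$; set $m:=n_\psi$. Running a back-and-forth between $\mathbb M_1$ and $\mathbb M_2$ and invoking the inductive hypothesis for $\psi$, the theorem for $\phi$ reduces to the following. \emph{Extension Lemma:} for every $m$ there is $n=n(m,|\bar x|)$ such that whenever $g\colon\cl^n_{\mathbb M_1}(\bar a_1)\stackrel{w}{\cong}\cl^n_{\mathbb M_2}(\bar a_2)$ and $b_1\in\mathbb M_1$, there is $b_2\in\mathbb M_2$ with $\cl^m_{\mathbb M_1}(\bar a_1b_1)\stackrel{w}{\cong}\cl^m_{\mathbb M_2}(\bar a_2b_2)$. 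Indeed, applying this to a witness $b_1$ of $\psi(\bar a_1,\cdot)$ produces $b_2$, the inductive hypothesis for $\psi$ then gives $\mathbb M_2\models\psi(\bar a_2,b_2)$, hence $\mathbb M_2\models\phi(\bar a_2)$, and symmetrically; so $n_\phi:=n(n_\psi,|\bar x|)$.

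To prove the Extension Lemma I would fix $m$, put $C:=\cl^m_{\mathbb M_1}(\bar a_1b_1)$, and let $N:=f(|\bar x|+1,m)$ be the uniform bound of \Cref{rem:2.17}, so $C$ is finite of size $\le N$. The key point is that inside the finitely generated structure $\langle C\rangle$ the set $C_0:=\cl_{\langle C\rangle}(\bar a_1)$, being a union of intrinsic extensions of $\bar a_1$, is itself an intrinsic extension of $\bar a_1$ of size $\le N$; hence $C_0\subseteq\cl^{N'}_{\mathbb M_1}(\bar a_1)$ for some $N'$ depending only on $N$ --- regardless of how deep $b_1$ itself may sit in the (for irrational $\alpha$, infinite) set $\cl_{\mathbb M_1}(\bar a_1)$. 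Take $n\ge N'$. Then $g$ transports $C_0$ to $X_2:=g(C_0)\subseteq\cl^n_{\mathbb M_2}(\bar a_2)$ and, by \Cref{wiso}, respects the $\cl^i$-structure of subsets for $i<n$. Since $C_0\leqslant\langle C\rangle$, smoothness (\Cref{mol:5}) yields $C_0\leqslant C$, so by \Cref{rem:2.19} the extension $C_0\leqslant C$ factors as an algebraic part followed by a transcendental one. One then realizes a weakly isomorphic copy of $C$ over $X_2$ in $\mathbb M_2$: the algebraic part is realized because $\mathbb M_2\models T$ and $T$ has free amalgamation (colouring the new points as in $C$, which is consistent by the $\delta$-inequalities holding in $C$), and the transcendental part is realized over it using the third axiom of $\mathbb T_\alpha$ (\Cref{def-of-t-alpha}) for a strong defining formula (\Cref{existance-of-strong}), together with $\omega_1$-saturation, \Cref{obs:1.7}, and semi-genericity of $\mathbb M_2$ (\Cref{lem:3.14}) --- the last being exactly what lets the transcendental part be added \emph{freely}. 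Taking $b_2$ to be the image of $b_1$ and comparing finite closures, one gets $\cl^m_{\mathbb M_2}(\bar a_2b_2)\stackrel{w}{\cong}C=\cl^m_{\mathbb M_1}(\bar a_1b_1)$.

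The main obstacle is the uniformity of $n$ in the Extension Lemma, and within it two delicate points. First, one must know that the part of $\cl^m_{\mathbb M_1}(\bar a_1b_1)$ that is algebraic over $\bar a_1$ --- the part that has to be pre-existing on the $\mathbb M_2$-side and transported by $g$ --- lies within a fixed finite radius $\cl^{N'}_{\mathbb M_1}(\bar a_1)$; this is supplied by identifying it with $C_0=\cl_{\langle C\rangle}(\bar a_1)$, via the fact (a submodularity computation, in the spirit of \Cref{bound} and the $\Delta$-system argument of \Cref{lem:1.22}) that a union of intrinsic extensions of a fixed set is again intrinsic. Second, one must check that the freshly realized transcendental part does not enlarge $\cl^m_{\mathbb M_2}(\bar a_2b_2)$ beyond the intended copy of $C$ --- precisely the use of semi-genericity of $\mathbb M_2$ (\Cref{lem:3.14}), controlling finite closures as in \Cref{lem:3.3.4}. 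Once the theorem holds, completeness of $\mathbb T_\alpha$ for irrational $\alpha$ is immediate: apply it with $\bar a_i$ the empty tuple (for which $\cl^n_{\mathbb M_i}(\emptyset)=\emptyset$ in every member of $\mathcal K^+_\alpha$) to sentences $\phi$, after passing to $\omega_1$-saturated elementary extensions.
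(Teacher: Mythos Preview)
Your overall architecture---induction on formulas, reducing the quantifier step to an Extension Lemma, and invoking semi-genericity of $\mathbb{M}_2$ to realize the transcendental part---is exactly the paper's. The atomic case and the Boolean steps are fine. The difference, and the gap, is in how you choose the ``base'' over which to apply semi-genericity and how you then compare $\cl^m$ on both sides.

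You take $C_0:=\cl_{\langle C\rangle}(\bar a_1)$ and argue $C_0\subseteq\cl^{N'}_{\mathbb M_1}(\bar a_1)$. (Your justification that ``a union of intrinsic extensions is again intrinsic'' is false in general, but the conclusion still holds: every element of $C_0$ lies in an intrinsic extension of $\bar a_1$ contained in $C_0\subseteq C$, hence of size at most $N$.) The real problem comes at the end. After you embed $C$ over $X_2=g(C_0)$, semi-genericity only gives you
\[
\cl^m_{\mathbb M_2}(\hat f(C))=\hat f(C)\oplus_{X_2}\cl^m_{\mathbb M_2}(X_2),
\]
so to conclude $\cl^m_{\mathbb M_2}(\bar a_2 b_2)\stackrel{w}{\cong}C$ you must \emph{know} $\cl^m_{\mathbb M_2}(X_2)$: both that it lies inside the domain of the weak isomorphism $g$, and that no intrinsic extension of $\bar a_2 b_2$ leaks into $\cl^m_{\mathbb M_2}(X_2)\setminus X_2$. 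Your choice of $n$ only guarantees $C_0\subseteq\cl^n_{\mathbb M_1}(\bar a_1)$; it does not bound $\cl^m_{\mathbb M_1}(C_0)$ inside $\cl^n_{\mathbb M_1}(\bar a_1)$, and for irrational $\alpha$ there is no a priori bound on the size of the intrinsic extensions of $\bar a_1$ needed to cover $\cl^m_{\mathbb M_1}(C_0)$. So ``comparing finite closures'' is not justified.

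The paper solves exactly this by replacing your single set $C_0$ with an iterated tower $A_0=\bar a_1$, $A_{i+1}=\cl^p(A_i)$ for $i<p$, where $p$ exceeds $|H_1\setminus H_0|$. A pigeonhole then produces a level $j<p$ with $(A_{j+1}\setminus A_j)\cap(H_1\setminus H_0)=\emptyset$, which yields both $A_j\leqslant A_jH_1$ \emph{and} the crucial extra information $\cl^p_{\mathbb M_1}(A_j)=A_{j+1}\subseteq\cl^{n_\varphi}_{\mathbb M_1}(\bar a_1)$. Transporting the whole tower via $g$ gives $\cl^p_{\mathbb M_2}(A_j^{\mathbb M_2})=A_{j+1}^{\mathbb M_2}$ on the other side, and this is precisely what lets one confine $\cl^{n_\psi}_{\mathbb M_2}(\bar a_2 b_2)$ to $A_j^{\mathbb M_2}\hat g(H_1)$ and finish via \Cref{wiso}. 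Your $C_0$ plays the role of $A_j$, but you are missing the analogue of $A_{j+1}$; building it in after the fact would essentially reproduce the tower.
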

Note that the idea of the following proof is  borrowed from
\cite{BS97}, Lemma 1.30, but there are essential alterations
that make the  rewrite necessary.
\begin{proof}
Assume that
$\mathbb{M}_{1},\mathbb{M}_{2}$,
$\bar{a}_{1}\in \mathbb{M}_{1}^{|\bar{x}|}$ and
$\bar{a}_{2}\in \mathbb{M}_{2}^{|\bar{x}|}$ are as in the statement of the theorem.
We proceed by induction  on the  complexity of $\mathcal{L}_{p}$-formulas. As it appears, the induction step for the
$\mathcal{L}$-atomic formulas
as well as the
boolean connectives are straightforward.  For an $\mathcal{L}_{p}$-atomic formula of the form $\varphi:=p(t(x_{1},\dots, x_{n}))$ where $t(x_{1},\dots, x_{n})$ is an $\mathcal{L}$-term, we may take $n_{\varphi}=1$. Then notice that $\acl_{\mathbb{M}_{i}}(\bar{a}_{i})\cap p(\mathbb{M}_{i})\subseteq \cl^{1}_{\mathbb{M}_{i}}(\bar{a}_{i})$ for $i=1,2$. Hence   $\cl^{1}_{\mathbb{M}_{1}}(\bar{a}_{1})\stackrel{w}{\cong} \cl^{1}_{\mathbb{M}_{2}}(\bar{a}_{2})$ implies that $\acl_{\mathbb{M}_{1}}(\bar{a}_{1}) \stackrel{w}{\cong} \acl_{\mathbb{M}_{2}}(\bar{a}_{2})$. Therefore $t^{\mathbb{M}_{1}}(\bar{a}_{1})\in p(\mathbb{M}_{1})$ if and only if  $t^{\mathbb{M}_{2}}(\bar{a}_{2})\in p(\mathbb{M}_{2})$.
\par
 Now assuming  that  the statement holds for
$\psi(y, \bar{x})$ we need to introduce a natural number
$n_{\varphi}$ so that
it holds also for the  formula
$\varphi(\bar{x})=\exists y \psi(y,\bar{x})$.
\par
By Remark \ref{rem:2.17}
 let the natural number $p_{1}$ be   sufficiently large so that
$|\cl^{n_{\psi}}_{N}(\bar{a}d)|<p_{1}$
for all
$N\in \mathcal{K}_{\alpha}^{+}$ and $\bar{a}, d\in N$ with $|\bar{a}|=|\bar{x}|$.
 Put $p=\max\{p_{1},n_{\psi}\}$.
For $N\in \mathcal{K}_{\alpha}^{+}$
and $\bar{a}\in N^{|\bar{x}|}$ and $i< p$,  we define inductively
 $A_i^N(\bar{a})$ by setting $A_{0}^{N}(\bar{a})=\bar{a}$ and $ A^{N}_{i+1}(\bar{a})=\cl^{p}_{N}(A_{i}^N(\bar{a}))$. Now let $n_{\varphi}\geqslant 1$ be such that
$A^{N}_{p}(\bar{a})\subseteq \cl^{n_{\varphi}}_{N}(\bar{a})$
 for all $N\in \mathcal{K}^{+}_{\alpha}$ and
$\bar{a}\in N^{|\bar{x}|}$. We will show that $n_{\varphi}$ satisfies the requirement of the theorem.
\par
 Let
$\bar{a}_{1}\in \mathbb{M}_{1}^{|\bar{x}|}$ and
$\bar{a}_{2}\in \mathbb{M}_{2}^{|\bar{x}|}$ be such that
$ \cl^{n_{\varphi}}_{\mathbb{M}_{1}}(\bar{a}_{1}) \stackrel{w}{\cong}  \cl^{n_{\varphi}}_{\mathbb{M}_{2}}(\bar{a}_{2}) $.
For
$b_{1}\in \mathbb{M}_{1}$ we show that
there is
$b_{2}\in \mathbb{M}_{2}$ with
$\cl^{n_{\psi}}_{\mathbb{M}_{1}}(\bar{a}_{1}b_{1})\stackrel{w}\cong \cl^{n_{\psi}}_{\mathbb{M}_{2}}(\bar{a}_{2}b_{2})$.
\par
Denote by $H_0$ the set with elements
$\bar{a}_1b_1$ and let
$H_{1}=\cl^{n_{\psi}}_{\mathbb{M}_{1}}(H_{0})$.
Since $ \cl^{n_{\varphi}}_{\mathbb{M}_{1}}(\bar{a}_{1}) \stackrel{w}{\cong}  \cl^{n_{\varphi}}_{\mathbb{M}_{2}}(\bar{a}_{2}) $,
for each
$i\leqslant p$,
$A_i^{\mathbb{M}_{1}}:=A_{i}^{\mathbb{M}_{1}}(\bar{a}_{1})\stackrel{w}{\cong } A_{i}^{\mathbb{M}_{2}}:=A_{i}^{\mathbb{M}_{2}}(\bar{a}_{2})$. Since $|H_{1}\setminus H_{0}|<p$, there is
$j< p$ such that
\begin{align*}
(A^{\mathbb{M}_{1}}_{j+1}\setminus A^{\mathbb{M}_{1}}_{j})\cap (H_{1}\setminus H_{0})=\emptyset.
\end{align*}
Moreover the fact that
$p > |H_{1}\setminus A^{\mathbb{M}_{1}}_{j}|$
  implies
 $A^{\mathbb{M}_{1}}_{j}\leqslant  A^{\mathbb{M}_{1}}_{j} H_{1}$.
\par
Now  by Remark  \ref{rem:2.19} we may find $G_{1}\subseteq H_{1}$ such that $A^{\mathbb{M}_{1}}_{j}G_1$ is algebraic over $A^{\mathbb{M}_{1}}_{j}$ and  $A^{\mathbb{M}_{1}}_{j} H_{1}$ is transcendental over $A^{\mathbb{M}_{1}}_{j}G_1$.
Note that since
$n_{\varphi}\geqslant 1$, we have
$\acl_{\mathbb{M}_1}(\bar{a}_1)\stackrel{w}{\cong }\acl_{\mathbb{M}_2}(\bar{a}_2)$.
Hence there is $G_2\subseteq \mathbb{M}_2$ such that $A_j^{\mathbb{M}_1}G_1 \stackrel{w}{\cong} A_j^{\mathbb{M}_2}G_2$.
Furthermore, again by Remark \ref{rem:2.19},
\begin{align*}\cl^p_{\mathbb{M}_2}(A_j^{\mathbb{M}_2}G_2)=\cl^{p}_{\mathbb{M}_2}(A_j^{\mathbb{M}_2})\oplus_{A_j^{\mathbb{M}_2}} A_j^{\mathbb{M}_2} G_2.\hspace*{3mm}(\ast)
\end{align*}
Let
$g:A^{\mathbb{M}_{1}}_{j} G_{1}  \to \mathbb{M}_{2}$ be the $\mathcal{L}_{p}$-embedding witnessing  $A_j^{\mathbb{M}_1}G_1 \stackrel{w}{\cong} A_j^{\mathbb{M}_2}G_2 $.
Now since   $ A^{\mathbb{M}_{1}}_{j}  H_{1}$ is transcendental  over $A^{\mathbb{M}_{1}}_{j}G_{1}$, by  semi-genericity of $\mathbb{M}_2$, we find an $\mathcal{L}_p$-embedding
$\hat{g}:A_j^{\mathbb{M}_1} H_{1}\to \mathbb{M}_{2}$ extending $g$ such that
\begin{align*}
  \cl^p_{\mathbb{M}_2}(A_j^{\mathbb{M}_2}\hat{g}(H_{1}))=\cl^{p}_{\mathbb{M}_2}(A_j^{\mathbb{M}_2}G_2)\oplus_{A_j^{\mathbb{M}_2}G_2} A_j^{\mathbb{M}_2}\hat{g}(H_1).
\end{align*}
 Now by $(\ast)$ we have
\begin{align*}
\cl^p_{\mathbb{M}_2}(A_j^{\mathbb{M}_2}\hat{g}(H_1))=\cl^{p}_{\mathbb{M}_2}(A_j^{\mathbb{M}_2})\oplus_{A_j^{\mathbb{M}_2}} A_j^{\mathbb{M}_2} \hat{g}(H_1),
\end{align*}
that is
\[
\cl^p_{\mathbb{M}_2}(A_j^{\mathbb{M}_2}\hat{g}(H_1))=A_{j+1}^{\mathbb{M}_2}\oplus_{A_j^{\mathbb{M}_2}} A_j^{\mathbb{M}_2} \hat{g}(H_1).
\]
Now
letting
$b_{2}=\hat{g}(b_{1})$,
we will show that
$\cl^{n_{\psi}}_{\mathbb{M}_{1}}(\bar{a}_{1}b_{1})\stackrel{w}\cong \cl^{n_{\psi}}_{\mathbb{M}_{2}}(\bar{a}_{2}b_{2})$.
\par
By the definition of $\hat{g}$  we have
$A_{j}^{\mathbb{M}_{2}}\hat{g}(H_{1})  \stackrel{w}{\cong}A_{j}^{\mathbb{M}_{1}}H_{1}$  and  $\cl_{\mathbb{M}_{1}}^{n_{\psi}}(\bar{a}_{1},b_{1})=H_1\subseteq A_{j}^{\mathbb{M}_{1}}H_{1}$. Note that
\begin{align*}
 \cl_{\mathbb{M}_{2}}^{n_{\psi}}(\bar{a}_{2},b_{2})\subseteq\cl_{\mathbb{M}_{2}}^{p}(\bar{a}_{2},b_{2})\subseteq \cl_{\mathbb{M}_{2}}^{p}(A_{j}^{\mathbb{M}_{2}}\hat{g}(H_{1}))=A_{j+1}^{\mathbb{M}_{2}}\hat{g}(H_{1}).
\end{align*}
 Moreover, since
$A_{j+1}^{\mathbb{M}_{2}}$ and $\hat{g}(H_{1})$ are free over $A_{j}^{\mathbb{M}_{2}}$,
we have
$ \cl_{\mathbb{M}_{2}}^{n_{\psi}}(\bar{a}_{2},b_{2})\subseteq A_{j}^{\mathbb{M}_{2}}\hat{g}(H_1)$.
So it follows that
 $\cl^{n_{\psi}}_{\mathbb{M}_{1}} (\bar{a}_{1},b_{1})=\cl^{n_{\psi}}_{\langle A^{\mathbb{M}_{1}}_{j}H_{1}\rangle} (\bar{a}_{1},b_{1})$
 and
$\cl^{n_{\psi}}_{\mathbb{M}_{2}} (\bar{a}_{2},b_{2})=\cl^{n_{\psi}}_{\langle A^{\mathbb{M}_{2}}_{j} \hat{g}(H_{1})\rangle} (\bar{a}_{2},b_{2})$. On the other hand  by \Cref{wiso} we have that
\begin{align*}
\cl^{n_{\psi}}_{\langle A^{\mathbb{M}_{1}}_{j}H_{1}\rangle} (\bar{a}_{1},b_{1})\stackrel{w}{\cong} \cl^{n_{\psi}}_{\langle A^{\mathbb{M}_{2}}_{j} \hat{g}(H_{1})\rangle} (\bar{a}_{2},b_{2}).
\end{align*}
Therefore
$\cl^{n_{\psi}}_{\mathbb{M}_{1}} (\bar{a}_{1},b_{1}) \stackrel{w}{\cong}\cl^{n_{\psi}}_{\mathbb{M}_{2}} (\bar{a}_{2},b_{2})$.
\end{proof}
The completeness of $\mathbb{T}_\alpha$
is finally established as follows.
\begin{thm}
$\mathbb{T}_{\alpha}$ is complete.
\end{thm}
\begin{proof}
If $\mathbb{M}_{1}$  and
$\mathbb{M}_{2}$ are two
$\omega_{1}$-saturated models of
$\mathbb{T}_{\alpha}$ then by Theorem \ref{lem:3.15},
$\mathbb{M}_{1}\equiv \mathbb{M}_{2}$. Therefore $\mathbb{T}_{\alpha}$ is complete.
\end{proof}
\begin{cor}
\label{types-closures}
 Let
$\mathfrak{C}$
be a monster model of
$\mathbb{T}_{\alpha}$.
\begin{enumerate}
\item
Assume that
$\bar{a}_{1},\bar{a}_{2}$ in $\mathfrak{C}$ are small tuples
 and
 $X$ is a  closed small subset of $\mathfrak{C}$. Then,
\begin{align*}
\tp(\bar{a}_{1}/X)=\tp(\bar{a}_{2}/X)\Leftrightarrow\langle \cl(X\bar{a}_{1})\rangle \cong_{\langle X \rangle} \langle \cl(X\bar{a}_{2})\rangle
\end{align*}
\item
$\mathfrak{C}$ is
$\lambda$-rich, for all
$\lambda<|\mathfrak{C}|$.
\end{enumerate}
\end{cor}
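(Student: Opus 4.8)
For part~1 I would obtain ($\Rightarrow$) at once from the homogeneity of the monster: if $\tp(\bar a_1/X)=\tp(\bar a_2/X)$, pick an automorphism $\sigma$ of $\mathfrak C$ fixing $X$ pointwise with $\sigma(\bar a_1)=\bar a_2$; then $\sigma$ fixes $\langle X\rangle$ pointwise and, $\cl$ being invariant under $\mathcal L_p$-automorphisms, $\sigma$ restricts to an $\mathcal L_p$-isomorphism $\langle\cl(X\bar a_1)\rangle\to\langle\cl(X\bar a_2)\rangle$ over $\langle X\rangle$ carrying $\bar a_1$ to $\bar a_2$. For ($\Leftarrow$), given such an isomorphism $\psi$, I would run the following argument: fix an $\mathcal L_p$-formula $\varphi(\bar x,\bar y)$ and a finite tuple $\bar b$ from $X$, and let $n=n_\varphi$ be the bound from \Cref{lem:3.15} for $\varphi$ read as a formula in $\bar x\bar y$. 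Since $X$ is closed and $\cl$ monotone, $\cl^n_{\mathfrak C}(\bar a_i\bar b)\subseteq\cl_{\mathfrak C}(X\bar a_i)\subseteq\langle\cl(X\bar a_i)\rangle$, and as $\langle\cl(X\bar a_i)\rangle\leqslant\mathfrak C$ this relative closure is computed the same way inside that substructure; as $\psi$ preserves $\dim$ and colors it preserves $\delta$ and hence $\cl^n$, so by \Cref{wiso} it restricts to a weak isomorphism $\cl^n_{\mathfrak C}(\bar a_1\bar b)\stackrel{w}{\cong}\cl^n_{\mathfrak C}(\bar a_2\bar b)$; then \Cref{lem:3.15} gives $\mathfrak C\models\varphi(\bar a_1,\bar b)\Leftrightarrow\mathfrak C\models\varphi(\bar a_2,\bar b)$, and since $\varphi,\bar b$ were arbitrary, $\tp(\bar a_1/X)=\tp(\bar a_2/X)$.

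For part~2, $\mathfrak C\models T$ is clear, so I would take $M_1\leqslant M_2$ generated by fewer than $\lambda$ elements and a strong embedding $f\colon M_1\to\mathfrak C$ and aim to extend it to a strong $g\colon M_2\to\mathfrak C$. The first move is to reduce to $M_2$ transcendental over $M_1$: replace $M_1$ by $\acl_{M_2}(M_1)$ — still closed in $M_2$, colorless over $M_1$, small, and with $M_2$ transcendental over it by \Cref{rem:2.19} — and extend $f$ correspondingly to $\acl_{\mathcal L}(f(M_1))$, which is forced since $f$ is $\mathcal L$-elementary by quantifier elimination and $f(M_1)\leqslant\mathfrak C$ makes $\acl_{\mathfrak C}(f(M_1))$ closed and colorless over $f(M_1)$. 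Writing $X=f(M_1)$, a closed small subset of $\mathfrak C$, I would then invoke semi-genericity: $\mathfrak C$ is $\omega_1$-saturated, hence semi-generic by \Cref{lem:3.14}, so for each $n$ there is an $\mathcal L_p$-embedding $\hat f_n\colon M_2\to\mathfrak C$ extending $f$ with $\cl^n_{\mathfrak C}(\hat f_n(M_2))=\hat f_n(M_2)\oplus_X\cl^n_{\mathfrak C}(X)=\hat f_n(M_2)$; equivalently $\hat f_n(M_2)$ has no proper intrinsic extension in $\mathfrak C$ of size below $n$, so $\delta(C/\hat f_n(M_2))\geqslant 0$ whenever $|C\setminus\hat f_n(M_2)|<n$.

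To turn the family $(\hat f_n)$ into a single strong embedding I would argue by compactness. Consider the partial $\mathcal L_p$-type $\Sigma(\bar z)$ over $X$, in variables $\bar z$ indexing $M_2\setminus M_1$, comprising the quantifier-free $\mathcal L_p$-diagram of $M_2$ over $M_1$ together with, for every finite configuration whose quantifier-free $\mathcal L$-type and colors force its $\delta$ over $X\bar z$ to be negative, the assertion that no such configuration extends $X\bar z$; these assertions are first-order over $X$ by $\bigvee$-Definability and the definability of $\dim$. A realization of $\Sigma$ is exactly the trace of a strong embedding $g\supseteq f$ of $M_2$, because any witness of $\delta(C/g(M_2))<0$ shows up on a finite sub-tuple of $g(M_2)$ and contradicts the relevant formula of $\Sigma$. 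Every finite subset of $\Sigma$ mentions configurations of size below some $n$ only, and is satisfied in $\mathfrak C$ by $\hat f_n(M_2\setminus M_1)$; so $\Sigma$ is finitely satisfiable and, by $|X|^+$-saturation of the monster, realized. (For rational $\alpha$ this last step is superfluous: $\cl_{\mathfrak C}=\cl^n_{\mathfrak C}$ for $n$ large by \Cref{rem:2.17}, so $g=\hat f_n$ already works.)

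I expect the main obstacle to be exactly this final passage. Semi-genericity yields, for each $n$ separately, an extension that is merely $\cl^n$-closed, while $\lambda$-richness demands one that is genuinely $\leqslant$-closed; the bridge is to package ``closedness over $X$'' as a countable list of first-order conditions and lean on the saturation of $\mathfrak C$ — a maneuver that becomes trivial in the rational case because there the closure of a finite set is finite.
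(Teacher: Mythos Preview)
Your argument for part~1 is correct and is essentially the intended one: homogeneity of the monster gives $(\Rightarrow)$, and \Cref{lem:3.15} gives $(\Leftarrow)$ exactly as you describe.

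For part~2 there is a genuine gap. Semi-genericity in \Cref{lem:3.14} is established only for \emph{finite} $A\leqslant B$---recall the paper's convention that $A,B,C,D$ denote finite sets, and note that the proof of \Cref{lem:3.14} goes through \Cref{lem:3.3.4}, which manipulates the real numbers $\delta(D^*/A)$ and $\delta(B/A)$. Your maps $\hat f_n\colon M_2\to\mathfrak C$ therefore exist when $M_1,M_2$ are finitely generated, and your compactness argument then cleanly yields $\omega$-richness (this is the content of \Cref{th:3.4.6} in the rational case). But $\lambda$-richness for $\lambda>\omega$ asks you to extend strong embeddings of structures generated by infinite sets, and there you invoke semi-genericity directly on the infinite pair $M_1\leqslant M_2$, which is not what \Cref{lem:3.14} provides. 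The finite-fragment step of your compactness argument does not repair this: the finite piece of $M_2$ mentioned by a given fragment need not be $\leqslant$-closed in $M_2$, so semi-genericity does not apply to it either, and for irrational $\alpha$ you cannot enlarge it to a finite closed set.

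The route the paper has in mind---given that the statement is offered as an unproved corollary of completeness and \Cref{lem:3.15}---avoids semi-genericity altogether. Amalgamate $M_2$ and $\mathfrak C$ over $M_1$ (via $f$) into some $N\in\mathcal K_\alpha^+$ using \Cref{lem:17}; strongly embed $N$ into a sufficiently rich model $\mathbb M$ of $\mathbb T_\alpha$. Since $\mathfrak C\leqslant\mathbb M$, closures of finite tuples from $\mathfrak C$ agree in the two models, so by \Cref{lem:3.15} the inclusion $\mathfrak C\hookrightarrow\mathbb M$ is elementary. Now realize $\tp_{\mathbb M}\big(g(M_2\setminus M_1)/f(M_1)\big)$ inside $\mathfrak C$ by saturation; by homogeneity of $\mathbb M$ (or by part~1 applied in $\mathbb M$) the realization together with $f(M_1)$ is closed in $\mathbb M$, hence in $\mathfrak C$.
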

\subsubsection{$\alpha$ rational}
When
$\alpha$ is rational, we show the completeness of $\mathbb{T}_\alpha$
 by proving that
any
 $\omega$-saturated model of $\mathbb{T}_{\alpha}$ is
$\omega$-rich and that any two $\omega$-rich structures of $\mathcal{K}^{+}_{\alpha}$ are back and forth equivalent.
\begin{thm}\label{th:3.4.6}
 Suppose that $\mathbb{M}$ is an $\omega$-saturated model of $\mathbb{T}_{\alpha}$ then $\mathbb{M}$ is $\omega$-rich.
\end{thm}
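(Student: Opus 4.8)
The plan is to show directly that $\mathbb{M}$ satisfies the defining condition of an $\omega$-rich structure (Definition~\ref{def:11}): given finitely generated $M_1\leqslant M_2$ in $\mathcal{K}_\alpha^+$ and a strong embedding $f:M_1\to\mathbb{M}$, I must extend $f$ to a strong embedding of $M_2$. Note that $\mathbb{M}\models T$ and, being an $\omega$-saturated $\mathcal{L}_p$-structure, is $\omega$-saturated as a model of $T$; also $f(M_1)\leqslant\mathbb{M}$, so $\cl_{\mathbb{M}}(f(M_1))=f(M_1)$, which for rational $\alpha$ is finite (Remark~\ref{rem:2.17}). First I reduce to a transcendental extension. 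By Remark~\ref{rem:2.19} factor $M_1\leqslant B_1\leqslant M_2$ with $B_1$ algebraic over $M_1$ and $M_2$ transcendental over $B_1$. Since $M_1\leqslant B_1$ forces every element of $B_1\setminus M_1$ to be non-colored (and algebraic over $M_1$), the type $\qftp_{\mathcal{L}}(B_1/M_1)$ is algebraic, hence realized over $f(M_1)$ in $\mathbb{M}$ by quantifier elimination; colouring the new points non-colored produces a strong embedding $g_1:B_1\to\mathbb{M}$ extending $f$, closedness of $g_1(B_1)$ following from a short computation using $f(M_1)\leqslant\mathbb{M}$ and that no new colors appear. Replacing $(M_1,f)$ by $(B_1,g_1)$, we may assume $M_2$ is transcendental over $M_1$, and also $\delta(M_2/M_1)>0$ (the boundary value $0$ being disposed of by a direct free realization).

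Next I enlarge $M_2$ and realize many free copies of the enlargement. Using Remark~\ref{rem:2.17}, fix a bound $t_0$ on $|\cl_{\mathbb{M}}(g(M_2))|$ valid for every $\mathcal{L}_p$-embedding $g:M_2\to\mathbb{M}$, and apply Lemma~\ref{lem:3.3.6} with $A=M_1$, $B=M_2$ and $t=t_0+1$ to get a transcendental extension $D^*\supseteq M_2$ of $M_1$ with $\delta(D^*/M_1)=0$ such that $M_2\leqslant D'$ whenever $M_2\subseteq D'\subseteq D^*$ and $|D'\setminus M_2|<t$. Let $\varphi$ be a strong $\mathcal{L}$-formula for $D^*$ over $M_1$ (Lemma~\ref{existance-of-strong}). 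Every finite strengthening of $\varphi$ lying in $\qftp_{\mathcal{L}}(D^*/M_1)$ is again strong by Lemma~\ref{existance-of-strong}(2), so Axiom~3 of $\mathbb{T}_\alpha$ (Definition~\ref{def-of-t-alpha}) applies to it; combined with $\omega$-saturation of $\mathbb{M}$, this lets us realize $\qftp_{\mathcal{L}}(D^*/M_1)$ over $f(M_1)$ with matching colors. Performing this for the free amalgam of $m$ copies of $D^*$ over $M_1$ (itself transcendental over $M_1$ with relative pre-dimension $0$) and letting $m\to\infty$ by saturation, we obtain $\mathcal{L}_p$-embeddings $g_i:D^*\to\mathbb{M}$ $(i<\omega)$, each extending $f$, each a faithful $\mathcal{L}_p$-copy of $D^*$ over $M_1$, with images $D^*_i$ pairwise $\dim$-free over $f(M_1)$. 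Put $B_i=g_i(M_2)$; then $\delta(D^*_i/f(M_1))=\delta(D^*/M_1)=0$.

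Finally I run a $\Delta$-system argument. Set $Z=\{i<\omega:B_i\not\leqslant\mathbb{M}\}$. If $Z$ is finite we are done: for $i\notin Z$, $g_i\!\restriction\! M_2$ is a strong embedding of $M_2$ extending $f$. So assume $Z$ is infinite. For $i\in Z$ put $C_i=\cl_{\mathbb{M}}(B_i)\supsetneq B_i$; then $|C_i\setminus B_i|\leqslant t_0<t$ and $\delta(C_i/B_i)<0$. Were $C_i\subseteq D^*_i$, transporting along $g_i$ would give $B_i\leqslant C_i$ by Lemma~\ref{lem:3.3.6}, contradicting $\delta(C_i/B_i)<0$; hence $C_i\setminus D^*_i\neq\emptyset$. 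The same instance of Lemma~\ref{lem:3.3.6} gives $\delta\big((C_i\cap D^*_i)/B_i\big)\geqslant 0$, so submodularity yields $\delta(C_i/D^*_i)\leqslant\delta\big(C_i/(C_i\cap D^*_i)\big)\leqslant\delta(C_i/B_i)<0$. The sets $H_i:=D^*_i\cup C_i$ have bounded size, so by Lemma~\ref{lem:1.22} there are an arbitrarily large finite $\Omega\subseteq Z$ and $F$ with $H_i\cap H_j=F$ and $F\leqslant H_i$ for all distinct $i,j\in\Omega$. Since $f(M_1)\subseteq D^*_i\subseteq H_i$ for each $i$, we get $f(M_1)\subseteq F$, and then $f(M_1)\leqslant\mathbb{M}$ together with Remark~\ref{mol:5} gives $f(M_1)\leqslant F\leqslant H_i$, so $\delta(H_i/f(M_1))\geqslant 0$. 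But $\delta(H_i/f(M_1))=\delta(D^*_i/f(M_1))+\delta(C_i/D^*_i)=0+\delta(C_i/D^*_i)<0$, a contradiction. Hence $Z$ is finite.

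The crux is the passage from the \emph{local} closedness of $B_i$ inside $D^*_i$ (Lemma~\ref{lem:3.3.6}) to the \emph{global} closedness $B_i\leqslant\mathbb{M}$; this is precisely where rationality of $\alpha$ is used, since it furnishes a single parameter $t$ dominating, for all embeddings at once, the size of every intrinsic extension that can arise — the analogue, for full richness, of the $\cl^n$-bookkeeping in the irrational semi-generic argument (Theorem~\ref{lem:3.14}). A secondary technical point is to secure the $D^*_i$ as faithful, pairwise free $\mathcal{L}_p$-copies of $D^*$ using only Axiom~3 and $\omega$-saturation, which rests on Lemma~\ref{existance-of-strong}(2).
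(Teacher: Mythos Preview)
Your overall architecture mirrors the paper's: reduce to a transcendental extension via Remark~\ref{rem:2.19}, enlarge to $D^*$ with $\delta(D^*/M_1)=0$ via Lemma~\ref{lem:3.3.6}, realize many free copies through Axiom~3, and run the $\Delta$-system argument (Lemma~\ref{lem:1.22}). The gap is at the step where you ``fix a bound $t_0$ on $|\cl_{\mathbb{M}}(g(M_2))|$ valid for every $\mathcal{L}_p$-embedding $g:M_2\to\mathbb{M}$'', citing Remark~\ref{rem:2.17}. That remark does not give this: part~(1) bounds $|\cl^n|$ uniformly in the embedding for each \emph{fixed} $n$, while part~(3) says only that for a \emph{particular} subset the full closure equals some $\cl^n$, with $n$ depending on that subset and on the ambient model. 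In fact no such uniform bound can exist: Lemma~\ref{lem:3.3.5} manufactures, for every $t$, minimal pairs $(M_2,D)$ with $|D\setminus M_2|>t$ and $M_1\leqslant D$, and an $\omega$-saturated $\mathbb{M}\models\mathbb{T}_\alpha$ will contain realizations of these over $f(M_1)$, so there are $\mathcal{L}_p$-embeddings $g$ extending $f$ with $|\cl_{\mathbb{M}}(g(M_2))|$ arbitrarily large. Without $t_0$ you cannot choose $t$ in Lemma~\ref{lem:3.3.6} once and for all; the inference ``$|C_i\setminus B_i|\leqslant t_0<t$'' fails, and with it both the deduction $C_i\setminus D^*_i\neq\emptyset$ and the bounded-size hypothesis needed to invoke Lemma~\ref{lem:1.22} on the $H_i$.

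The paper sidesteps this by not aiming for a strong embedding in one pass. For each $n$ it runs exactly your $\Delta$-system argument (taking $t>n$ in Lemma~\ref{lem:3.3.6}) to obtain an $\mathcal{L}_p$-embedding of $B$ whose image is \emph{$n$-strong}, i.e.\ closed against intrinsic extensions adding fewer than $n$ points; since $\cl^n$ has uniformly bounded size by Remark~\ref{rem:2.17}(1), ``$n$-strong'' is a type condition over the finite parameter set $f(A)$. Then $\omega$-saturation yields a single embedding that is $n$-strong for all $n$, hence strong. So rationality of $\alpha$ is used not to bound closures uniformly --- it cannot --- but to supply the exact-zero extensions of Lemma~\ref{lem:3.3.6} that make the contradiction $\delta(H_i/f(A))<0$ go through at each level $n$. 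Your closing gloss on where rationality enters is accordingly off the mark.
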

\begin{proof}
Let
$M\leqslant N$ be two finitely generated structures in $\mathcal{K}_{\alpha}^{+}$
and
$f:M\to \mathbb{M}$ be a strong $\mathcal{L}_{p}$-embedding. We prove that there is a strong $\mathcal{L}_{p}$-embedding $g:N\to \mathbb{M}$  extending
$f$.
\par
Since $M,N$ are finitely generated, there is $A\subseteq_{\text{fin}}M$ and $B\subseteq_{\text{fin}}N$ such that
$M=\langle A\rangle$, $N=\langle B\rangle$ and
$A\leqslant B\leqslant N$. By \Cref{rem:2.19} we have two  specific cases to consider.
\paragraph*{Case 1.}
$B$ is   algebraic over $A$.
\par \noindent
Since $A\leqslant N\in \mathcal{K}_{\alpha}^{+}$, any  $x\in B\setminus A$ is non-colored. So by \Cref{rem:2.19}  it is clear that there is a strong $\mathcal{L}_{p}$-embedding  $g:B\to \mathbb{M}$ extending $f$.
\paragraph*{Case 2.}
$B$ is   transcendental over  $A$.
\par \noindent
There are two subcases  to consider:
\par
\textbf{Case 2.1}
$\delta(B/A)>0$. By \Cref{lem:3.3.6} there is   $D\in \mathcal{K}_{\alpha}^{+}$ extending $B$  with $A\leqslant D$,  $D$  transcendental over $A$  and $\delta(D/A)=0$. Using the same method as in the  proof of \Cref{lem:3.14}   for each natural number $n$ we can find an $\mathcal{L}_{p}$-embedding $g:D\to \mathbb{M}$ extending $f$   such that $g(B)$ is $n$-strong, i.e. for each $C\subseteq \mathbb{M}$  containing $g(B)$ with $|C\setminus g(B)|<n$  we have $B\leqslant C$. Hence by $\omega$-saturation there is indeed a strong $\mathcal{L}_{p}$-embedding $g:B\to \mathbb{M}$ extending $f$.
\par
\textbf{Case 2.2.}
$\delta(B/A)=0$. This can be dealt with as Case $2.1$ by  taking $D$ above in the place of $B$.
\par
Now  by \Cref{rem:2.19} there is $B_{1}$ with $A\leqslant B_{1}\leqslant B$ such that
$B_{1}$ is algebraic over $A$ and $B$ is transcendental over $B_{1}$. By  Case $1$ there is an $\mathcal{L}_{p}$-embedding $g_{1}:B_{1}\to \mathbb{M}$ extending $f$   and by Case 2, there is an  $\mathcal{L}_{p}$-embedding
  $g:B\to \mathbb{M}$ extending $g_{1}$.
\end{proof}
It is easy to see that $\omega$-rich structures are back
and forth equivalent and hence so are any two
$\omega$-saturated models of $\mathbb{T}_\alpha$.
This yields the following sought-for theorem.
\begin{thm}
$\mathbb{T}_{\alpha}$ is complete.
\end{thm}

\begin{rem}
\Cref{types-closures} also holds for rational $\alpha$.
\end{rem}
 \begin{rem}
 If $T$ is an o-minimal expansion of a densely ordered group and $\alpha=1$ then  $\mathbb{T}_{\alpha}$ happens to be the theory $T^{\indep}$ as introduced in \cite{D16}.
 \end{rem}
\section{Dependence and Strong Dependence}
In this section, we will prove that
if $T$ is
dependent, that is
when it has Not-the-Independence Property,
 then so
is
$\mathbb{T}_{\alpha}$.
We will show that
 $\mathbb{T}_{\alpha}$ inherits the strong dependence from $T$ as well, when
 $\alpha$ is rational. Moreover
we will show that
when $T$ defines a linear order,
 $\mathbb{T}_\alpha$ is not strongly dependent.
\par
Recall that
 an
$\mathcal{L'}$-theory
$T'$ is called
\textit{dependent} if each
$\mathcal{L'}$-formulas
$\varphi(\bar{x},\bar{y})$ is dependent,
that is there are no model
$M'\models T'$,
and sequences
$\{\bar{a}_{i}:\hspace*{3mm}i\in \omega\}$
and
$\{\bar{b}_{J}:\hspace*{3mm}J\subseteq \omega\}$
such that
$M\models \varphi(\bar{a}_{i},\bar{b}_{J})$
if and only if
$i\in J$.
It is known that
in this definition suffices it to assume
$|\bar{x}|=1$.
\par
From now on we  add the further assumption that
$T$ is  dependent, and let
$\mathfrak{C}$
be a monster model of
$\mathbb{T}_{\alpha}$. In this section we use
boldface letters $\mathbb{M},\mathbb{N}$
for models of $\mathbb{T}_\alpha$.
\par
By standard facts on dependent theories,
to show that
$\mathbb{T}_{\alpha}$ is dependent, we
verify that for a given model
$\mathbb{M}$ of
$\mathbb{T}_{\alpha}$, the number of coheir extensions of any $1$-type
$p(x)$ over
$\mathbb{M}$ is  at most
$2^{|\mathbb{M}|}$.
The following notion of
$D$-independence
would be of great help (\cite{P15}) for our counting of the coheirs.
\begin{dfn}
Let
$\mathbb{M}$
be an arbitrary model of
$\mathbb{T}_{\alpha}$,
$A,B\subseteq_{\text{fin}} \mathbb{M}$
and $X\subseteq \mathbb{M}$. Define
\begin{enumerate}
\item $D(A)=\inf\{\delta(C) :\hspace*{3mm}A\subseteq C\subseteq_{\text{fin}}\mathbb{M}\}$,
\item
$D(B/A)=D(BA)-D(A)$,
\item
$\CL(A)=\{x\in \mathbb{M}:\hspace*{3mm} D(xA) = D(A)\}$,
\item
$D(A/X)=\inf\{D(A/X_{0}):\hspace*{3mm} X_{0}\subseteq_{\text{fin}}X\}$.
\end{enumerate}
\end{dfn}
Note that the function $D$ is automorphism invariant, i.e. for any automorphism $f:\mathfrak{C}\to \mathfrak{C}$ we have $D(A/X)=D(f(A)/f(X))$  for any $A,X\subseteq  \mathfrak{C}$.
\begin{fact}
\label{no-decreasing-sequence-of-D}
When $\alpha$ is rational it can be easily seen that
$D(A)=\delta(\cl(A))$
and $D(B/A)=
\delta(\cl(BA))-\delta(\cl(A))$. Therefore
the set containing all positive values
  $D(B/A)$ is discrete, and it does not contain any infinite decreasing  sequence.
\end{fact}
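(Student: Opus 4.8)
I would deduce the whole statement from the single observation that, once $\alpha$ is rational, $\cl_M(A)$ is a \emph{finite} closed set for every finite $A\subseteq M$ (Remark~\ref{rem:2.17}(3)); after that it is pure bookkeeping with $\delta$.

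\textbf{Step 1: $D(A)=\delta(\cl(A))$.} Since $\cl(A)$ is a finite set containing $A$, the definition of $D$ gives $D(A)\leqslant\delta(\cl(A))$ at once. For the reverse inequality I take an arbitrary finite $C$ with $A\subseteq C$ and show $\delta(C)\geqslant\delta(\cl(A))$. First, $A\subseteq C$ forces $\cl(A)\subseteq\cl(C)$ (monotonicity of $\cl$), and since $\cl(A)\leqslant M$, transitivity of $\leqslant$ (Remark~\ref{mol:5}) yields $\cl(A)\leqslant\cl(C)$, hence $\delta(\cl(C))=\delta(\cl(C)/\cl(A))+\delta(\cl(A))\geqslant\delta(\cl(A))$. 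Second, $C\leqslant_i\cl(C)$ (any $C'$ with $C\subseteq C'\leqslant\cl(C)$ is itself $\leqslant M$, hence closed, hence equals $\cl(C)$), so $\cl(C)$ is reached from $C$ by a finite tower of minimal pairs $C=B_0\subseteq\cdots\subseteq B_n=\cl(C)$; as $\delta(B_{i+1}/B_i)<0$ for each minimal pair, $\delta$ is non-increasing along the tower and $\delta(C)\geqslant\delta(\cl(C))$. Combining the two, $\delta(C)\geqslant\delta(\cl(A))$, so $D(A)=\inf_{C\supseteq A}\delta(C)\geqslant\delta(\cl(A))$, the infimum being attained at $C=\cl(A)$.

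\textbf{Step 2: the formula for $D(B/A)$ and its values.} By definition $D(B/A)=D(BA)-D(A)$, so Step~1 immediately gives $D(B/A)=\delta(\cl(BA))-\delta(\cl(A))$. Running the first half of the Step~1 argument with $C=BA$ shows $\cl(A)\leqslant\cl(BA)$, hence in fact $D(B/A)\geqslant0$ for all finite $A,B$. Finally, writing $\alpha=m/n$ with $(m,n)=1$, every $\delta$-value $\dim(X)-\tfrac{m}{n}|p(X)|$ lies in $\tfrac1n\mathbb{Z}$; therefore $D(B/A)\in\tfrac1n\mathbb{Z}$, and its positive values all belong to $\{k/n:k\geqslant1\}$, which is discrete in $\mathbb{R}$ and well-ordered, hence contains no infinite strictly decreasing sequence.

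\textbf{Where the work is.} Almost everything here is routine; the only step needing genuine care is $\delta(C)\geqslant\delta(\cl(C))$ in Step~1. That is where one must invoke the decomposition of an intrinsic extension into minimal pairs together with the fact, read off from the definition of a minimal pair, that each such pair has strictly negative relative pre-dimension — and, crucially, one must know that $\cl_M(C)$ is finite so that this tower is finite. This last point is exactly where the rationality of $\alpha$ enters.
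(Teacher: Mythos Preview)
Your argument is correct and is exactly the natural way to unpack the paper's ``it can be easily seen''; since the paper gives no proof of this Fact, there is nothing further to compare. One cosmetic remark: the property of $\leqslant$ you invoke from Remark~\ref{mol:5} to get $\cl(A)\leqslant\cl(C)$ is item~3 (downward persistence) rather than transitivity, and in Step~1 you could shortcut the minimal-pair tower by noting directly from $C\leqslant_i\cl(C)$ that $\delta(\cl(C))\leqslant\delta(C)$ (the equivalent formulation of $\leqslant_i$ recorded just after its definition).
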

\begin{dfn}
Let
$\mathbb{M}\models \mathbb{T}_{\alpha}$,
$A, B\subseteq_{\text{fin}}\mathbb{M}$
 and
$Z\subseteq \mathbb{M}$. We say that
$A, B$
are $D$-independent over
$Z$
and write
$A\forkindep_{Z}^{D} B$ whenever
$D(A/Z)=D(A/ZB)$
and
$\cl(AZ)\cap \cl(BZ)=\cl(Z)$.
Moreover for arbitrary subsets
$X, Y$
of
$\mathbb{M}$
we say that
$X, Y$
are $D$-independent over
$Z$
and write
$X\forkindep_{Z}^{D}Y$
if
$C\forkindep_{Z}^{D} E$
for any
$C\subseteq_{\text{fin}} X$
and
$E\subseteq_{\text{fin}} Y$.
In this case we also say that
$\tp(X/YZ)$ is a $D$-independent extension of $\tp(X/Z)$.
\end{dfn}
The following facts,
and the technique to prove the following lemma,
are available in
\cite{{BS96}}, Section  3.
\begin{fact}
\label{fact-4-3}
The relation $\forkindep^{D}$ has the following properties.
\begin{enumerate}
\item\textit {$D$-symmetry}.
If
$A\forkindep_{B}^{D} C$, then
$C\forkindep_{B}^{D} A$.
\item\textit {$D$-transitivity}.
$A\forkindep_{B}^{D} C$
and
$A\forkindep_{BC}^{D} E$ if and only if
$A\forkindep_{B}^{D} CE$.
\item\textit {$D$-local character}.
For any $A$ and
$X$
there exists a countable set
$X_{0}\subseteq X$ such that
$A\forkindep_{X_{0}}^{D} X$.
In the case of a
rational $\alpha$,  $X_0$
is finite.
\item\textit {$D$-existence}.
For all sets
$X, Y, Z $
with
$Y\subseteq  Z$
there exists
$X^{'}$ such that
$\tp(X/Y)=\tp(X^{'}/Y)$
 and
 $X^{'}\forkindep_{Y}^{D} Z$.
\end{enumerate}
\end{fact}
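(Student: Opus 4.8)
The plan is to push the $\delta$-calculus up to the function $D$ and then read off the four clauses. Since $\delta(C)\geqslant 0$ for every finite $C$ inside a structure of $\mathcal K^{+}_{\alpha}$, the infimum defining $D$ is a non-negative real and $D$ is monotone under inclusion; taking infima over finite supersets of $A$ and of $B$ in the submodular inequality for $\delta$ gives the submodular inequality
\[D(AB)+D(A\cap B)\leqslant D(A)+D(B),\]
and the definition yields at once the chain rule $D(AB/Z)=D(A/BZ)+D(B/Z)$ (first for finite $Z$, then in general via the infimum over finite subsets of $Z$). Combining these we get $D(A/Z)\geqslant D(A/BZ)\geqslant 0$ and the symmetric identity $D(A/Z)-D(A/BZ)=D(B/Z)-D(B/AZ)$. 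A final preliminary, which does the real work below, is the geometric reformulation of $\forkindep^{D}$: for finite $Z$, if $A\forkindep^{D}_{Z}B$ then --- using $\cl(AZ)\cap\cl(BZ)=\cl(Z)$, the fact that colourings are disjoint over $\cl(Z)$, and $\cl(AZ),\cl(BZ)\leqslant\mathbb M$ --- a submodularity computation shows that $\cl(AZ)$ and $\cl(BZ)$ are $\dim$-free over $\cl(Z)$ and that $\cl(ABZ)=\cl(AZ)\oplus_{\cl(Z)}\cl(BZ)$; conversely this free-amalgam picture returns $D$-independence. For irrational $\alpha$ one argues the same way with infima in place of minima.

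\emph{Symmetry} is then immediate: the clause $D(A/Z)=D(A/BZ)$ is equivalent, by the symmetric identity, to $D(B/Z)=D(B/AZ)$, and $\cl(AZ)\cap\cl(BZ)=\cl(Z)$ is visibly symmetric; for arbitrary $X,Y$ one quantifies over finite subsets, as in the definition of $\forkindep^{D}$. For \emph{transitivity}, the $D$-value part is the monotonicity chain $D(A/B)\geqslant D(A/BC)\geqslant D(A/BCE)$ together with the chain rule, so that equality of the two ends forces both intermediate equalities and, conversely, two equalities compose. For the $\cl$-part one passes through the reformulation: $A\forkindep^{D}_{B}CE$ makes $\cl(ABCE)=\cl(AB)\oplus_{\cl(B)}\cl(BCE)$, inside which one checks $\cl(ABC)=\cl(AB)\oplus_{\cl(B)}\cl(BC)$; the two intersection identities $\cl(AB)\cap\cl(BC)=\cl(B)$ and $\cl(ABC)\cap\cl(BCE)=\cl(BC)$ then fall out of a modularity computation for free amalgams over algebraically closed substructures, using the free-amalgamation property of $T$ and the transitivity of $\dim$-independence (Fact \ref{f:2.3}); the converse direction is the same computation read backwards.

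\emph{Local character.} For rational $\alpha$, Fact \ref{no-decreasing-sequence-of-D} tells us that the values $D(A/X_{0})$, as $X_{0}$ runs over the finite subsets of $X$, lie in a discrete subset of $[0,D(A)]$, so the infimum $r=D(A/X)$ is attained by some finite $X_{0}$. One then iterates: whenever $\cl(AX_{0})\cap\cl(X)\neq\cl(X_{0})$, pick a witness $x$ in the difference and, by the finite character of $\cl$, a finite $X_{1}\subseteq X$ with $x\in\cl(X_{1})$, and pass to $X_{0}X_{1}$; this keeps $D(A/X_{0})=r$ and, since $\cl(AX_{0})$ stays of bounded size (Remark \ref{rem:2.17}), only finitely many such steps occur, leaving a finite $X_{0}$ with $A\forkindep^{D}_{X_{0}}X$. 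For irrational $\alpha$ one builds instead an increasing $\omega$-chain of finite sets $X_{n}\subseteq X$, at even stages adjoining a finite piece that brings $D(A/X_{n})$ within $2^{-n}$ of $D(A/X)$ and at odd stages, under a bookkeeping over the countable set $\cl(A\bigcup_{n}X_{n})$, repairing the $\cl$-clause; then $X_{\omega}=\bigcup_{n}X_{n}$ is countable, $D(A/X_{\omega})=\inf_{n}D(A/X_{n})=D(A/X)$, and $\cl(AX_{\omega})\cap\cl(X)=\cl(X_{\omega})$ by finite character, so $A\forkindep^{D}_{X_{\omega}}X$. \emph{Existence.} It suffices to treat the case in which $Y$ is closed and algebraically closed in $\mathfrak C$ (the general case follows by absorbing closures and applying the transitivity just proved). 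Form $\langle\cl_{\mathfrak C}(XY)\rangle$ and $\langle\cl_{\mathfrak C}(ZY)\rangle$, both strong extensions of $\langle Y\rangle$ via algebraically closed embeddings, and by Lemma \ref{lem:17} their free amalgam $N=\langle\cl_{\mathfrak C}(XY)\rangle\oplus_{\langle Y\rangle}\langle\cl_{\mathfrak C}(ZY)\rangle$ in $\mathcal K^{+}_{\alpha}$. Since $\langle\cl_{\mathfrak C}(ZY)\rangle\leqslant N$ and $\langle\cl_{\mathfrak C}(ZY)\rangle\leqslant\mathfrak C$, the richness of $\mathfrak C$ (Corollary \ref{types-closures}) supplies a strong embedding $h:N\to\mathfrak C$ fixing $\langle\cl_{\mathfrak C}(ZY)\rangle$ pointwise; set $X'=h(X)$. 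Then $h$ fixes $Y$ and carries $\langle\cl_{\mathfrak C}(XY)\rangle$ onto $\langle\cl_{\mathfrak C}(X'Y)\rangle$, so $\tp(X'/Y)=\tp(X/Y)$ by Corollary \ref{types-closures}; and freeness of the amalgam, transported by $h$, gives $\cl(X'Y)\cap\cl(Z)=\cl(Y)$ and, since $\dim$, the colouring, hence $\delta$ (and so $D$) split additively over the base of a free amalgam, $D(X'/Z)=D(X'/Y)$. Thus $X'\forkindep^{D}_{Y}Z$.

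The step I expect to be the main obstacle is the $\cl$-part of \emph{transitivity}: it rests entirely on the geometric reformulation --- that $D$-independence is \emph{exactly} the assertion that the relevant $\cl$-closures form a free amalgam --- together with the associativity of free amalgamation, and carrying it out requires care in matching up the various $\cl$- and $\acl$-closures and base sets. A pervasive secondary difficulty, specific to irrational $\alpha$, is that $D$ is a genuine infimum which need not be attained, so every computation above has to be run with the infimum rather than a minimum. Both points follow the template of \cite{BS96}, Section 3.
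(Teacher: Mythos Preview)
The paper does not give its own proof of this statement: it is recorded as a \emph{Fact} with the remark that ``the following facts, and the technique to prove the following lemma, are available in \cite{BS96}, Section~3.'' Your sketch is precisely along the lines of that reference (and you cite it yourself), so there is nothing to compare against; the paper simply defers to the literature.

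One small point worth tightening in your local-character argument for rational $\alpha$: you justify termination of the iteration by saying ``$\cl(AX_{0})$ stays of bounded size (Remark~\ref{rem:2.17}).'' But Remark~\ref{rem:2.17} only gives that $\cl$ of a \emph{fixed} finite set is finite; as you enlarge $X_{0}$ to $X_{0}X_{1}$, the set $\cl(AX_{0}X_{1})$ may well grow, so boundedness of $\cl(AX_{0})$ is not in itself a termination argument. The standard fix is to observe that once $D(A/X_{0})=D(A/X)$ is attained, any $x\in\cl(AX_{0})\cap\cl(X)\setminus\cl(X_{0})$ forces a strict drop in $\delta$ along the way to $\cl(X_{0}x)$, contradicting minimality of $r$; alternatively one tracks $|\cl(AX_{0})\setminus\cl(X_{0})|$, which is uniformly bounded in terms of $|A|$ and $r$ and does not increase under the step $X_{0}\mapsto X_{0}X_{1}$. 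Either way the iteration halts, but the reason is not quite the one you gave.
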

\begin{fact}
\label{morley-sequence}
By $D$-existence,
it is possible to find a
$D$-Morley sequence for
a given type
$p(\bar{x})\in S_{|\bar{x}|}(M)$; that is  a sequence
$(\bar{b}_{i})_{i<\omega}$ of realizations of
$p(\bar{x})$ such that
$\bar{b}_{0},\ldots,\bar{b}_{n}\forkindep_{M}^{D} \bar{b}_{n+1}$ for each
$n<\omega$.
\end{fact}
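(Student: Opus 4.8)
The plan is to build the sequence $(\bar{b}_i)_{i<\omega}$ one term at a time by recursion, using $D$-existence to keep each new term a realization of $p$ while forcing the required $D$-independence, and $D$-symmetry to put that independence into the exact shape demanded by the definition of a $D$-Morley sequence.

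First I would fix a realization $\bar{b}_0\models p$ in a monster model $\mathfrak{C}$; this exists since $p\in S_{|\bar{x}|}(M)$ is a consistent type. For the recursion step, suppose $\bar{b}_0,\ldots,\bar{b}_n$ have been constructed, each a realization of $p$, with $\bar{b}_0,\ldots,\bar{b}_i\forkindep_{M}^{D}\bar{b}_{i+1}$ for all $i<n$. Choose any $\bar{c}\models p$ and apply $D$-existence (\Cref{fact-4-3}) with $X=\bar{c}$, $Y=M$ and $Z=M\bar{b}_0\ldots\bar{b}_n$ (note $Y\subseteq Z$): this yields $\bar{b}_{n+1}$ with $\tp(\bar{b}_{n+1}/M)=\tp(\bar{c}/M)=p$ and $\bar{b}_{n+1}\forkindep_{M}^{D}M\bar{b}_0\ldots\bar{b}_n$. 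Restricting to the subset $\bar{b}_0\ldots\bar{b}_n\subseteq Z$ gives $\bar{b}_{n+1}\forkindep_{M}^{D}\bar{b}_0\ldots\bar{b}_n$, and $D$-symmetry turns this into $\bar{b}_0,\ldots,\bar{b}_n\forkindep_{M}^{D}\bar{b}_{n+1}$, as required. Iterating $\omega$ times produces the desired sequence; since $D$-existence applies to an arbitrary base-extension $Z$, the very same step yields $D$-Morley sequences of any prescribed length.

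I do not anticipate a genuine obstacle: the statement is a formal consequence of the properties listed in \Cref{fact-4-3}. The only point needing care is the bookkeeping in the recursion step — one must take $Z$ to be all of $M\bar{b}_0\ldots\bar{b}_n$, so that the new term is $D$-independent from the entire already-built segment over $M$ (not merely from its last term), and then invoke $D$-symmetry to recover the asymmetric formulation $\bar{b}_0,\ldots,\bar{b}_n\forkindep_{M}^{D}\bar{b}_{n+1}$ used in the definition. If in later applications one also wants the sequence to be $M$-indiscernible, one would additionally extract an indiscernible sequence with the same type over $M$ and check, using $D$-local character, that $D$-independence persists along it; but since the fact as stated asks only for the independence condition, this can be deferred.
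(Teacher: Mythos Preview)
Your proposal is correct and matches the paper's approach: the paper states this as a \emph{Fact} without proof, merely indicating ``By $D$-existence'' as the reason, and your recursive construction using $D$-existence together with $D$-symmetry is exactly the intended (and standard) argument behind that remark.
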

\begin{lem}\label{lem:4.4}
Let $Z\leqslant  \mathbb{M}$
and
$X,Y\subseteq \mathbb{M}$.
Then $
X\forkindep_{Z}^{D} Y$
if and only if
\begin{enumerate}
\item
$\cl(XZ)\cap \cl(YZ)=Z$,
\item
$\cl(XY)=\cl(XZ)\cup \cl(YZ)$,
\item
$\cl(XZ)\forkindep_{Z}^{\dim}\cl(YZ)$.
\end{enumerate}
The above three conditions imply that
$\cl(XY)=\cl(XZ)\oplus_{Z} \cl(YZ)$.
\end{lem}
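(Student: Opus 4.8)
The plan is to prove the equivalence by reducing to finite tuples and translating between the function $D$ and the pre-dimension $\delta$ on closures. First I would record the routine bookkeeping: by the basic facts on $\cl$ from Section~2 one has $\cl(XZ)=\bigcup\{\cl(CZ):C\subseteq_{\text{fin}}X\}$ and similarly for $Y$; by definition $X\forkindep^{D}_{Z}Y$ means $C\forkindep^{D}_{Z}E$ for all finite $C\subseteq X$ and $E\subseteq Y$; and $\dim$-independence passes to directed unions by finite character. Consequently each of (1)--(3) reduces to its finitary instance, and then the final displayed equality follows, so it suffices to treat $X=C$, $Y=E$ finite and prove that $C\forkindep^{D}_{Z}E$ if and only if $\cl(CZ)\cap\cl(EZ)=Z$, $\cl(CEZ)=\cl(CZ)\cup\cl(EZ)$, and $\cl(CZ)\forkindep^{\dim}_{Z}\cl(EZ)$.

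The technical heart is the identification, valid for $Z\leqslant\mathbb{M}$ and finite $C,E$,
\[
D(C/Z)=\delta\bigl(\cl(CZ)/Z\bigr),\qquad D(C/EZ)=\delta\bigl(\cl(CEZ)/\cl(EZ)\bigr),
\]
where for irrational $\alpha$ the right-hand sides denote the non-increasing limits of $\delta(\cl^{n}(CZ)/Z)$ and $\delta(\cl^{n}(CEZ)/\cl(EZ))$, which are legitimate since by \Cref{rem:2.17} the sets involved differ from the closed set below by boundedly much. This follows from the definitions of $D$ and $\CL$ together with $\cl(A)\subseteq\CL(A)$, idempotency of $\cl$, and $D(A)=\delta(\cl(A))$ for finite $A$, as in \cite{BS96}. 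I would also isolate the elementary submodular identity: for finite $P,Q\supseteq Z$ with $P\cap Q=Z$,
\[
\delta(P\cup Q/Z)\leqslant\delta(P/Z)+\delta(Q/Z),
\]
with equality if and only if $P\forkindep^{\dim}_{Z}Q$; this combines additivity of $\dim$, the bound $\dim(P\cup Q/Z)\leqslant\dim(P/Z)+\dim(Q/Z)$ (equality exactly under $\dim$-independence), and the fact that $P\cap Q=Z$ forces $|p(P\cup Q/Z)|=|p(P/Z)|+|p(Q/Z)|$.

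For ($\Rightarrow$): assume $C\forkindep^{D}_{Z}E$. Condition (1) is immediate, as $\cl(CZ)\cap\cl(EZ)=\cl(Z)=Z$ because $Z$ is closed. Put $A=\cl(CZ)$, $B=\cl(EZ)$; both are closed, $A\cap B=Z$, and $\cl(CEZ)=\cl(A\cup B)$ (as $A\cup B\subseteq\cl(CEZ)$ and $CEZ\subseteq A\cup B$). The core identification and $D$-independence give $\delta(A/Z)=D(C/Z)=D(C/EZ)=\delta(\cl(A\cup B)/B)$, while on the other hand
\[
\delta(\cl(A\cup B)/B)\leqslant\delta(A\cup B/B)=\delta(A/B)\leqslant\delta(A/Z),
\]
the first inequality because passing to the closure does not increase $\delta$, the equality by definition of $\delta(\cdot/B)$, and the last by monotonicity of $\dim$ and $A\cap B=Z$. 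Hence these are all equalities: $\delta(A/B)=\delta(A/Z)$ together with $A\cap B=Z$ yields $\dim(A/B)=\dim(A/Z)$, i.e.\ $\cl(CZ)\forkindep^{\dim}_{Z}\cl(EZ)$, which is (3); and $\delta(\cl(A\cup B)/B)=\delta(A\cup B/B)$, together with the fact that a proper intrinsic extension strictly decreases $\delta$, forces $\cl(A\cup B)=A\cup B$, which is (2).

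For ($\Leftarrow$): assume (1)--(3). Again $\cl(CZ)\cap\cl(EZ)=Z$ is the intersection clause of $C\forkindep^{D}_{Z}E$, and $D(C/Z)\geqslant D(C/EZ)$ always; for the reverse inequality, (2) gives $\cl(CEZ)=\cl(CZ)\cup\cl(EZ)=A\cup B$, so by the core identification
\[
D(C/EZ)=\delta(A\cup B/B)=\delta(A/B)=\dim(A/B)-\alpha\,|p(A/Z)|,
\]
using $A\cap B=Z$; now (3) gives $\dim(A/B)=\dim(A/Z)$, whence $D(C/EZ)=\delta(A/Z)=D(C/Z)$, so $C\forkindep^{D}_{Z}E$. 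Finally, (1)--(3) say precisely that $\cl(XZ)$ and $\cl(YZ)$ are free over $Z$ with union $\cl(XY)$, which is the last assertion of the lemma. I expect the main obstacle to be making the core identification rigorous when $Z$ and $EZ$ need not be closed and $\alpha$ is irrational, so that the closures are infinite: this requires carrying the $\cl^{n}$-approximations through each step and checking that the relevant infima are attained in the limit. Once that is in place, the forward direction is the equality analysis of the submodular identity and the backward direction is a short computation.
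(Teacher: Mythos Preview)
The paper does not actually supply a proof of this lemma; it merely states (just before Fact~\ref{fact-4-3}) that ``the technique to prove the following lemma'' is available in \cite{BS96}, Section~3, and then moves on. Your proposal follows precisely that Baldwin--Shi methodology: identify $D$ with the pre-dimension of closures over the closed base, and then read off conditions (1)--(3) as the equality cases in the submodular chain $\delta(\cl(A\cup B)/B)\leqslant\delta(A\cup B/B)\leqslant\delta(A/Z)$. So your approach is the intended one.

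Two small points to tighten. First, even for rational $\alpha$ the set $Z$ may be infinite, so $\cl(CZ)$ is not literally finite; what is finite is $\cl(CZ)\setminus Z$ (since $Z\leqslant\mathbb{M}$), and your ``core identification'' should be phrased as $D(C/Z)=\delta(\cl(CZ)\setminus Z\,/\,Z)$ in both the rational and irrational cases, the latter as a limit over $\cl^{n}$. Second, in the forward direction, the step ``$\delta(\cl(A\cup B)/B)=\delta(A\cup B/B)$ forces $\cl(A\cup B)=A\cup B$'' needs the observation that any finite $F$ with $A\cup B\subsetneq (A\cup B)F$ and $(A\cup B)\leqslant_{i}(A\cup B)F$ would give $\delta(F/(A\cup B))<0$, hence strictly drop the infimum; for irrational $\alpha$ this requires carrying the $\cl^{n}$-approximation through, exactly as you anticipate. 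Once those bookkeeping items are in place your argument is complete.
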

\begin{lem}\label{lem:4.5}
Suppose that
$q(\bar{x})\in S(\mathfrak{C})$ is  a coheir extension of
$p(\bar{x})\in S(\mathbb{M})$. Then
$q(\bar{x})$
is a
$D$-independent extension of
$p(\bar{x})$.
\end{lem}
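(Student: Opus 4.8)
The plan is to follow the strategy indicated for \cite{BS96}: rather than dissecting closures by hand, I would exploit that a coheir extension is an invariant type and then feed the abstract independence calculus for $\forkindep^{D}$ supplied by Facts \ref{fact-4-3} and \ref{morley-sequence}. First I would unwind the hypothesis. That $q(\bar x)$ is a coheir extension of $p(\bar x)$ means $q$ is finitely satisfiable in $\mathbb{M}$, hence (a standard fact, using the homogeneity of $\mathfrak{C}$) $q$ is $\mathrm{Aut}(\mathfrak{C}/\mathbb{M})$-invariant: if $\bar{b},\bar{b}'$ are finite tuples from $\mathfrak{C}$ with $\bar{b}\equiv_{\mathbb{M}}\bar{b}'$ and $\bar{a}\models q$, then $\bar{a}\bar{b}\equiv_{\mathbb{M}}\bar{a}\bar{b}'$. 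Fix $\bar{a}\models q$ in some elementary extension of $\mathfrak{C}$. Since $\forkindep^{D}$ is decided on finite subsets, it suffices to fix a finite tuple $\bar{b}$ from $\mathfrak{C}$ and prove $\bar{a}\forkindep^{D}_{\mathbb{M}}\bar{b}$.

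Next I would build, inside $\mathfrak{C}$, a sequence $(\bar{b}_{i})_{i<\omega_{1}}$ with $\bar{b}_{0}=\bar{b}$, each $\bar{b}_{i}\equiv_{\mathbb{M}}\bar{b}$, and $\bar{b}_{i}\forkindep^{D}_{\mathbb{M}}\{\bar{b}_{j}:j<i\}$ for all $i<\omega_{1}$; that is, a $D$-Morley sequence over $\mathbb{M}$ in $\tp(\bar{b}/\mathbb{M})$ as in Fact \ref{morley-sequence}, extended to length $\omega_{1}$ by transfinite recursion using $D$-existence (Fact \ref{fact-4-3}(4)). No indiscernibility is needed, since both $\bar{b}_{i}\equiv_{\mathbb{M}}\bar{b}$ and the independence over predecessors are arranged directly during the construction. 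By $D$-local character (Fact \ref{fact-4-3}(3)) there is a countable $X_{0}\subseteq \mathbb{M}\cup\bigcup_{i<\omega_{1}}\bar{b}_{i}$ with $\bar{a}\forkindep^{D}_{X_{0}}\bigl(\mathbb{M}\cup\bigcup_{i<\omega_{1}}\bar{b}_{i}\bigr)$. As $X_{0}$ is countable and $\mathrm{cf}(\omega_{1})>\omega$, there is $\gamma<\omega_{1}$ with $X_{0}\subseteq\mathbb{M}\cup\bigcup_{i<\gamma}\bar{b}_{i}$.

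Now I would transfer the base from $X_0$ to $\mathbb{M}$. From $\bar{a}\forkindep^{D}_{X_{0}}\mathbb{M}\bar{b}_{\gamma}$ and $D$-transitivity one gets $\bar{a}\forkindep^{D}_{\mathbb{M}X_{0}}\bar{b}_{\gamma}$, hence $\bar{b}_{\gamma}\forkindep^{D}_{\mathbb{M}X_{0}}\bar{a}$ by $D$-symmetry; meanwhile the construction gives $\bar{b}_{\gamma}\forkindep^{D}_{\mathbb{M}}\{\bar{b}_{j}:j<\gamma\}$, whence $\bar{b}_{\gamma}\forkindep^{D}_{\mathbb{M}}(\mathbb{M}X_{0})$ since $\mathbb{M}X_{0}\subseteq\mathbb{M}\cup\bigcup_{i<\gamma}\bar{b}_{i}$ and base elements on the right are free. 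Combining these two by $D$-transitivity yields $\bar{b}_{\gamma}\forkindep^{D}_{\mathbb{M}}(\mathbb{M}X_{0})\bar{a}$, and then monotonicity plus $D$-symmetry give $\bar{a}\forkindep^{D}_{\mathbb{M}}\bar{b}_{\gamma}$. Finally, since $\bar{b}_{\gamma}\equiv_{\mathbb{M}}\bar{b}$ and $q$ is $\mathrm{Aut}(\mathfrak{C}/\mathbb{M})$-invariant, $\bar{a}\bar{b}_{\gamma}\equiv_{\mathbb{M}}\bar{a}\bar{b}$, so by automorphism-invariance of $D$ and $\cl$ the relation $\bar{a}\forkindep^{D}_{\mathbb{M}}\bar{b}_{\gamma}$ transfers to $\bar{a}\forkindep^{D}_{\mathbb{M}}\bar{b}$. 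As $\bar{b}$ was arbitrary, $\bar{a}\forkindep^{D}_{\mathbb{M}}\mathfrak{C}$, i.e.\ $q$ is a $D$-independent extension of $p$.

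The step I expect to cause the most trouble is the base-change from $X_{0}$ to $\mathbb{M}$: one must chain $D$-symmetry and $D$-transitivity carefully around the countably generated sets $\mathbb{M}\subseteq\mathbb{M}X_{0}\subseteq\mathbb{M}\cup\bigcup_{i<\gamma}\bar{b}_{i}$, and verify that the transfinite $D$-Morley sequence genuinely satisfies $\bar{b}_{\gamma}\forkindep^{D}_{\mathbb{M}}\{\bar{b}_{j}:j<\gamma\}$ in the ``arbitrary sets'' sense of $\forkindep^{D}$ and not merely for the individually chosen predecessors. A secondary point, routine but worth stating explicitly, is that in the present language a type over $\mathbb{M}$ finitely satisfiable in $\mathbb{M}$ is $\mathrm{Aut}(\mathfrak{C}/\mathbb{M})$-invariant. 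Note that this argument is uniform in $\alpha$: for rational $\alpha$ the set $X_0$ is even finite (Fact \ref{fact-4-3}(3)), so an $\omega$-indexed $D$-Morley sequence already suffices, but the $\omega_1$-version covers both cases at once.
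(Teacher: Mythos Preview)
Your argument is correct and takes a genuinely different route from the paper's proof. The paper fixes a single small $\mathbb{N}\supseteq\mathbb{M}$ with $D(\bar{a}/\mathbb{N})=D(\bar{a}/\mathfrak{C})$, produces one $D$-independent copy $\mathbb{N}'\equiv_{\mathbb{M}}\mathbb{N}$, and then proves $D(\bar{a}/\mathbb{N}')=D(\bar{a}/\mathbb{N})$ by a formula-level argument: a putative drop in $D$ is witnessed by a formula $\varphi(\bar{z},\bar{x},\bar{e})$ with $\bar{e}\in\mathbb{N}$, and $\mathbb{M}$-invariance of the coheir moves this witness into $\mathbb{N}'$, contradicting the choice of $\gamma$. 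The closure-intersection clause $\cl(\bar{a}\mathbb{M})\cap\mathfrak{C}=\mathbb{M}$ is handled separately, using that $\cl\subseteq\Acl$ and that a coheir cannot realize a non-algebraic algebraic formula infinitely often. Your proof, by contrast, is purely axiomatic: you never open up the definition of $D$ or $\cl$, and instead run the standard ``invariant types are independent'' argument available for any relation satisfying existence, symmetry, transitivity and (countable) local character. The cost is the transfinite $D$-Morley sequence of length $\omega_1$ and the cofinality/pigeonhole step; the gain is that both clauses of $\forkindep^{D}$ come out at once, and the argument would apply verbatim to any abstract independence relation with these properties. The paper's version is more concrete and uses only two copies of $\mathbb{N}$; yours is more portable. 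Your self-identified worry about the base change is not an actual obstacle: $D$-existence at stage $\gamma$ directly yields $\bar{b}_\gamma\forkindep^{D}_{\mathbb{M}}\bigl(\mathbb{M}\cup\bigcup_{j<\gamma}\bar{b}_j\bigr)$, and right-monotonicity (which follows from transitivity) gives $\bar{b}_\gamma\forkindep^{D}_{\mathbb{M}}\mathbb{M}X_0$.
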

\begin{proof}
For
$\bar{a}\models q(\bar{x})$, we need to show that
$\bar{a}\forkindep_{\mathbb{M}}^{D}\mathfrak{C} $. Take a small set
$\mathbb{N}$ with
$\mathbb{M}\leqslant \mathbb{N}\leqslant \mathfrak{C}$  and
$D(\bar{a}/\mathbb{N})=D(\bar{a}/\mathfrak{C})$.
By $D$-existence and $D$-transitivity   (Fact \ref{fact-4-3}) we can  also find
$\mathbb{N}^{'}$ such that
$\mathbb{N}^{'}\forkindep_{\mathbb{M}}^{D}\mathbb{N}$ and
$\mathbb{N}^{'}\equiv _{\mathbb{M}}\mathbb{N}$.
\paragraph{Claim.}
$D(\bar{a}/\mathbb{N}^{'})=D(\bar{a}/\mathbb{N})$.
\newline
\textit{Proof of claim.}
Otherwise, assume that
$D(\bar{a}/\mathbb{N}^{'})=\gamma>D(\bar{a}/\mathbb{N})$. So we can find
$\bar{b} \subseteq_{\text{fin}} \cl(\bar{a}\mathbb{N})$ and
$\bar{e}\subseteq_{\text{fin}}\mathbb{N}$
such that
$\delta(\bar{b}/\bar{e})<\gamma$.
By Fact \ref{fact:1.1}
 there is
$\varphi( \bar{z},\bar{x},\bar{e})\in \tp(\bar{b}\bar{a}/\bar{e})$
 such that
for any
$\bar{b}^{'}$
and
$\bar{e}^{'}$
if
$\mathfrak{C}\models \varphi(\bar{b}^{'},\bar{a}, \bar{e}^{'})$
 then
$\bar{b}^{'}\subseteq \cl(\bar{a}\bar{e}^{'})$
and
$\delta(\bar{b}^{'}/\bar{a}\bar{e}^{'})<\gamma$.
So the formula
$\exists\bar{z} \hspace*{2mm}\varphi( \bar{z},\bar{x},\bar{e})$ is in  $\tp(\bar{a}/\mathfrak{C}) $. But since
$\tp_{\mathcal{L}}(\bar{a}/\mathfrak{C})$ is a coheir extension of
$\tp_{\mathcal{L}}(\bar{a}/\mathbb{M})$,
it is also
$\mathbb{M}$-invariant. So there is
$\bar{e}^{'}\subseteq \mathbb{N}^{'} $ such that
$\exists\bar{z} \hspace*{2mm}\varphi( \bar{z},\bar{x},\bar{e}^{'})\in \tp_{\mathcal{L}}(\bar{a}/\mathfrak{C})$. Therefore
$\delta(\bar{b}^{'}/\bar{a}\bar{e}^{'})<\gamma$ for some
$\bar{b}^{'}\subseteq \cl(\bar{a}\mathbb{N})$. This implies that
$D(\bar{a}/\mathbb{N}^{'})<\gamma$, a contradiction.\\
\qed(Proof of claim)\\
Writing the above claim as
$
\bar{a}\forkindep^D_{\mathbb{N}'} \mathbb{N}
$
together with our assumption that
$
\mathbb{N}'\forkindep^D_{\mathbb{M}}\mathbb{N}
$
and $D$-transitivity gives $\mathbb{N}'\bar{a}\forkindep^D_{\mathbb{M}}\mathbb{N}$.
Therefore again by $D$-transitivity we have  $\bar{a}\forkindep^D_{\mathbb{M}}
\mathfrak{\mathbb{N}}$, and hence
$\bar{a}\forkindep^D_{\mathbb{M}}\mathfrak{C}$
because
$D(\bar{a}/\mathfrak{C})=D(\bar{a}/\mathbb{N})$.

Finally we have to show that
$\cl(\bar{a}\mathbb{M})\cap \mathfrak{C}=\mathbb{M}$. Let
$e\in \cl(\bar{a}\mathbb{M})\cap \mathfrak{C}$.  By Remark \ref{rem:2.17}  there exists an algebraic  $\mathcal{L}_{p}$-formula
$\varphi(\bar{x}, e,\bar{m})\in q(\bar{x})$  witnessing  $e\in \cl(\bar{a}\mathbb{M})\cap \mathfrak{C}$. If $\tp(e/\mathbb{M})$ has  infinitely many realizations $(e_{i})_{i<\omega}$ in $\mathfrak{C}$ then as $q(\bar{x})$ is a coheir extension of $p(\bar{x})$, the formula
$\varphi(\bar{a},e_{i},\bar{m})$ holds for each $i<\omega$,
which contradicts the fact that $e\in  \acl(\bar{a}\mathbb{M})$.
 Therefore $\tp(e/\mathbb{M})$ is algebraic and hence
$e\in \mathbb{M}$.
\end{proof}
Recall that coheir extensions are in particular non-forking.
In the following proposition using a similar idea as in \cite{G05} Lemma 4.3,  we address the fact that  when $\alpha$ is rational, the above lemma holds even for
non-dividing extensions. However we would not require
this in the rest.
\begin{prop}
($\alpha$ rational)
Assume that
$\mathbb{M},\mathbb{N}$
are models of
$\mathbb{T}_{\alpha}$
such that
$\mathbb{M}\leqslant \mathbb{N}\leqslant \mathfrak{C}$. If
$\tp(\bar{a}/\mathbb{N})$
is a non-dividing extension of
$\tp(\bar{a}/\mathbb{M})$,
then
$\bar{a}\forkindep_{\mathbb{M}}^{D}\mathbb{N}$.
\end{prop}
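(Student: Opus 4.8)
The plan is to reduce the non-dividing hypothesis to the conclusion using the discreteness of the $D$-spectrum in the rational case (Fact \ref{no-decreasing-sequence-of-D}) together with a $D$-Morley sequence argument, in the style of \cite{G05}. First I would fix $\bar{a}$ and, using $D$-existence and $D$-local character (Fact \ref{fact-4-3}), find a finite subset of $\mathbb{N}$ over which $\bar{a}$ already realizes a $D$-independent extension of $\tp(\bar{a}/\mathbb{M})$; call the relevant data $\bar{b}\subseteq_{\text{fin}}\mathbb{N}$. By $D$-existence there is a type $q_0=\tp(\bar{a}_0/\mathbb{N})$ that is a $D$-independent extension of $\tp(\bar{a}/\mathbb{M})$ with $\tp(\bar{a}_0/\mathbb{M})=\tp(\bar{a}/\mathbb{M})$. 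The goal is to show $\tp(\bar{a}/\mathbb{N})=q_0$, equivalently $\bar{a}\forkindep^{D}_{\mathbb{M}}\mathbb{N}$.

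Next I would build a $D$-Morley sequence $(\bar{b}_i)_{i<\omega}$ in $\tp(\bar{b}/\mathbb{M})$ inside $\mathbb{N}$ — here $\bar{b}$ enumerates a finite closed set carrying the relevant $\mathbb{N}$-data, e.g.\ $\cl(\bar b\mathbb{M})$ restricted to what matters — with $\bar{b}_0=\bar{b}$; this is legitimate by Fact \ref{morley-sequence} after passing to a suitable small model. The non-dividing hypothesis then yields an automorphism $\sigma$ (or a single realization) such that $\tp(\bar{a}/\mathbb{M}\bar{b}_i)$ is constant in $i$, i.e.\ $\{\varphi(\bar{x},\bar{b}_i):i<\omega\}$ is consistent where $\varphi(\bar{x},\bar{b})$ isolates the relevant part of $\tp(\bar{a}/\mathbb{M}\bar{b})$. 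Now I would run the key estimate: if $\bar{a}\not\forkindep^{D}_{\mathbb{M}}\bar{b}$, then $D(\bar{a}/\mathbb{M}\bar{b})<D(\bar{a}/\mathbb{M})$, and using submodularity of $\delta$ together with $\cl(\bar a\mathbb M\bar b_0),\dots,\cl(\bar a\mathbb M\bar b_{n-1})$ sitting over the mutually-$\dim$-independent $\bar b_i$'s (Fact \ref{f:2.3}, $\dim$-Morley sequences), one shows $D(\bar{a}/\mathbb{M}\bar{b}_0\cdots\bar{b}_{n-1})$ strictly decreases with $n$. Since in the rational case $D$ takes values in a discrete set with no infinite decreasing sequence (Fact \ref{no-decreasing-sequence-of-D}), this is impossible; hence $D(\bar{a}/\mathbb{M}\bar{b})=D(\bar{a}/\mathbb{M})$ for every finite $\bar b\subseteq_{\text{fin}}\mathbb{N}$, which is the $D$-value half of $\bar{a}\forkindep^{D}_{\mathbb{M}}\mathbb{N}$.

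It then remains to check the intersection condition $\cl(\bar{a}\mathbb{M})\cap\mathbb{N}=\mathbb{M}$, which is handled exactly as at the end of the proof of \Cref{lem:4.5}: any $e\in\cl(\bar{a}\mathbb{M})\cap\mathbb{N}$ is witnessed by an algebraic $\mathcal{L}_p$-formula $\varphi(\bar{x},e,\bar{m})$ over $\mathbb{M}$; if $\tp(e/\mathbb{M})$ were non-algebraic it would have infinitely many realizations, and non-dividing (via an indiscernible sequence in $\tp(e/\mathbb{M})$ inside $\mathbb{N}$) would force $\varphi(\bar{a},e_i,\bar{m})$ for all $i$, contradicting $e\in\acl(\bar{a}\mathbb{M})$; so $e\in\mathbb{M}$. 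Combining, $\bar{a}\forkindep^{D}_{\mathbb{M}}\mathbb{N}$.

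The main obstacle I expect is the submodularity bookkeeping in the decreasing-$D$ estimate: one must verify that along a $\dim$-Morley sequence $(\bar b_i)$ the closures $\cl(\bar a\mathbb M\bar b_i)$ interact additively enough that a single strict drop $D(\bar a/\mathbb M\bar b)<D(\bar a/\mathbb M)$ propagates to an unbounded strictly decreasing chain, rather than stabilizing after finitely many steps. This needs a careful use of Lemma \ref{lem:4.4} (to identify closures of $D$-free unions) together with submodularity of $\delta$ and the fact that the $\bar b_i$ are $\dim$-independent over $\mathbb M$; the rational-$\alpha$ hypothesis is what turns "strictly decreasing" into an outright contradiction via Fact \ref{no-decreasing-sequence-of-D}.
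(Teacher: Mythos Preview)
Your overall strategy matches the paper's: assume a $D$-drop over some finite $\bar b\subseteq\mathbb N$, run a $D$-Morley sequence $(\bar b_i)$ in $\tp(\bar b/\mathbb M)$, use non-dividing to transport the drop to every $\bar b_i$, deduce that $D(\bar a/\mathbb M\bar b_0\cdots\bar b_i)$ is strictly decreasing in $i$, and contradict discreteness (Fact~\ref{no-decreasing-sequence-of-D}); the intersection condition is then handled exactly as at the end of Lemma~\ref{lem:4.5}. The preliminary material about $q_0$ and $\bar a_0$ in your first paragraph is never used and can be dropped.

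Where your write-up diverges from the paper is precisely at the obstacle you flag. You propose to obtain the strict decrease via submodularity of $\delta$ and $\dim$-independence of the $\bar b_i$ (invoking Fact~\ref{f:2.3}). The paper does not touch $\delta$ or $\dim$ here at all; it uses only additivity of $D$, which yields the identity $D(A/Z)-D(A/ZB)=D(B/Z)-D(B/ZA)$, together with the $D$-Morley property $D(\bar b_i/\mathbb M\bar b_{<i})=D(\bar b_i/\mathbb M)$. From $D(\bar a/\mathbb M)>D(\bar a/\mathbb M\bar b_i)$ one gets $D(\bar b_i/\mathbb M)>D(\bar b_i/\mathbb M\bar a)$; combining with the Morley condition and monotonicity forces $D(\bar b_i/\mathbb M\bar b_{<i})>D(\bar b_i/\mathbb M\bar a\bar b_{<i})$, and one more application of the identity flips this back to $D(\bar a/\mathbb M\bar b_{<i})>D(\bar a/\mathbb M\bar b_{\leqslant i})$. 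This two-line symmetry bookkeeping completely replaces the submodularity analysis you anticipate, and your appeal to Fact~\ref{f:2.3} is unnecessary. One minor point that both you and the paper leave implicit: the $D$-Morley sequence should be taken $\mathbb M$-indiscernible (extract one by Ramsey; the $D$-Morley property survives) so that non-dividing genuinely produces a single realization over all the $\bar b_i$; and there is no need for the sequence to live inside $\mathbb N$.
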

\begin{proof}
We first show that
$D(\bar{a}/\mathbb{M})=D(\bar{a}/\mathbb{N})$.
Assume on the contrary that
$D(\bar{a}/\mathbb{M})>D(\bar{a}/\mathbb{N})$. So  there is
$\bar{b}\subseteq \mathbb{N}\setminus \mathbb{M}$ such that
$D(\bar{a}/\mathbb{M})>D(\bar{a}/\mathbb{M}\bar{b})$. Assume that
$(\bar{b}_{i})_{i<\omega}$ is a Morley sequence of
$\tp(\bar{b}/\mathbb{M})$ with respect to
$\forkindep_{\mathbb{M}}^{D}$, as in
Fact \ref{morley-sequence}.
Clearly
$D(\bar{a}/\mathbb{M})>D(\bar{a}/\mathbb{M}\bar{b_{i}})$
for each
$i<\omega$.
 Now by the
$D$-symmetry  $D(\bar{b}_{i}/\mathbb{M})>D(\bar{b}_{i}/\mathbb{M}\bar{a})$.  On the other hand,
\begin{align}
D(\bar{b}_{i}/\mathbb{M}\bar{b}_{0},\ldots, \bar{b}_{i-1})\geqslant D(\bar{b}_{i}/\mathbb{M}\bar{a}\bar{b}_{0},\ldots, \bar{b}_{i-1}).
\end{align}
Now if equality occurred in  (1)  then we would have
\begin{align*}
D(\bar{b}_{i}/\mathbb{M})&=D(\bar{b}_{i}/\mathbb{M}\bar{b}_{0},\ldots, \bar{b}_{i-1})\\ &=D(\bar{b}_{i}/\mathbb{M}\bar{a}\bar{b}_{0},\ldots, \bar{b}_{i-1})\\
&\leqslant D(\bar{b}_{i}/\mathbb{M}\bar{a})
\end{align*}
which  is  a contradiction. So we have
\begin{align*}
D(\bar{b}_{i}/\mathbb{M}\bar{b}_{0},\ldots, \bar{b}_{i-1})> D(\bar{b}_{i}/\mathbb{M}\bar{a}\bar{b}_{0},\ldots, \bar{b}_{i-1}).
\end{align*}
Writing the above as
$\bar{b}_{i} \not\forkindep_{\mathbb{M}\bar{b}_{0},\ldots, \bar{b}_{i-1}}^{D}\bar{a}$, again by  the $D$-symmetry
we have
$\bar{a}\not\forkindep_{\mathbb{M}\bar{b}_{0},\ldots, \bar{b}_{i-1}}^{D} \bar{b}_{i}$. That is
\begin{align*}
D(\bar{a}/\mathbb{M}\bar{b}_{0},\ldots,\bar{b}_{i-1})>D(\bar{a}/\mathbb{M}\bar{b}_{0},\ldots,\bar{b}_{i}),
\end{align*}
 for each $i<\omega$, which is a contradiction,
 because in the case of a rational $\alpha$
 we cannot have a decreasing sequence of $D$-dimensions
 (see Fact \ref{no-decreasing-sequence-of-D}).
 \par
 Now the fact that
 $\cl(\bar{a}\mathbb{M})\cap \mathbb{N}=\mathbb{M}$
follows from a similar argument to the proof of Lemma \ref{lem:4.5}.
\end{proof}
We now get to the point to prove that the dependence is transferred  from $T$ to $\mathbb{T}_{\alpha}$.
\begin{theo}\label{th:4.7}
If
$T$
is
dependent,
then
so is
$\mathbb{T}_{\alpha}$.
\end{theo}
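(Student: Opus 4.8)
The plan is to reduce the dependence of $\mathbb{T}_\alpha$ to that of $T$ by a counting-of-coheirs argument, using the machinery of $D$-independence developed in Lemmas \ref{lem:4.4} and \ref{lem:4.5}. Fix a model $\mathbb{M}\models\mathbb{T}_\alpha$ and a $1$-type $p(x)\in S(\mathbb{M})$; by standard facts on NIP it suffices to bound the number of coheir extensions of $p$ to the monster $\mathfrak{C}$ by $2^{|\mathbb{M}|}$. First I would invoke Lemma \ref{lem:4.5}: every coheir extension $q(x)$ of $p(x)$ is a $D$-independent extension, so $\bar a\forkindep^{D}_{\mathbb{M}}\mathfrak{C}$ for $\bar a\models q$. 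By Lemma \ref{lem:4.4} this means $\cl(\bar a\mathbb{M})=\cl(\bar a\mathbb{M})\oplus_{\mathbb{M}}\mathbb{M}$ is ``as free as possible'' over $\mathbb{M}$: the $\mathcal{L}$-reduct of $\cl(\bar a\mathbb{M})$ is $\dim$-independent from $\mathbb{M}$ over $\mathbb{M}$ itself (trivially), and more to the point the coheir is controlled by its $\mathcal{L}$-part together with finite colour data.

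The key step is then to see that a coheir extension $q$ is determined by (i) the $\mathcal{L}$-type $\tp_{\mathcal{L}}(\bar a/\mathfrak{C})$, which is itself a coheir extension of $\tp_{\mathcal{L}}(\bar a/\mathbb{M})$ over the $\mathcal{L}$-structure, and (ii) a bounded amount of extra information describing the colours and the closure $\cl(\bar a\mathbb{M})$ — concretely, the weak-isomorphism type of $\cl^{n}(\bar a\mathbb{M})$-relevant data over $\mathbb{M}$, which by Remark \ref{rem:2.17} and the convention $A\leqslant\langle A\rangle$ is finite (or countable, controlled by $f(|B|,n)$) given the $\mathcal{L}$-data. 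Since $T$ is dependent, the number of coheir $\mathcal{L}$-extensions of $\tp_{\mathcal{L}}(\bar a/\mathbb{M})$ over $\mathbb{M}$ is at most $2^{|\mathbb{M}|}$; and the extra colour/closure data ranges over a set of size at most $2^{|\mathbb{M}|}$ as well (for each finite tuple from $\bar a\mathbb{M}$ we record finitely many bits). Multiplying, the total number of coheirs $q$ is $\le 2^{|\mathbb{M}|}$.

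More precisely, the argument I would write runs as follows. Suppose toward a contradiction that $\varphi(x,\bar y)$ is an $\mathcal{L}_p$-formula witnessing the independence property in some model of $\mathbb{T}_\alpha$; passing to $\mathfrak{C}$ and to an $\mathbb{M}$ of size $<2^{\aleph_0}$-many-parameters as needed, we would get $2^{|\mathbb{M}|^{+}}$-many types over $\mathbb{M}$ realised by a single tuple's worth of data in an indiscernible-sequence configuration — but every global type finitely satisfiable in $\mathbb{M}$ (and each of the relevant types can be taken of this form by the standard NIP characterisation) is, by Lemma \ref{lem:4.5}, a $D$-independent extension, hence determined by its $\mathcal{L}$-restriction plus finitely-much colour data over each finite subset of $\mathbb{M}$. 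Counting: the $\mathcal{L}$-restriction lies among $\le 2^{|\mathbb{M}|}$ coheirs by NIP of $T$, and the colour data over $\mathbb{M}$ is a function from finite subsets of $\mathbb{M}\cup\{x\}$ to a finite set, hence among $\le 2^{|\mathbb{M}|}$ possibilities; so there are at most $2^{|\mathbb{M}|}$ coheir extensions of any $p\in S_1(\mathbb{M})$, contradicting the count forced by IP. Therefore every $\mathcal{L}_p$-formula is dependent and $\mathbb{T}_\alpha$ is dependent.

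The main obstacle I expect is Step (i)--(ii): making precise that the coheir $q$ is genuinely \emph{recovered} from its $\mathcal{L}$-reduct together with the bounded colour/closure information, i.e. that two coheir extensions of $p$ with the same $\mathcal{L}$-reduct and the same colours on $\cl(\bar a\mathbb{M})$ coincide. This needs Corollary \ref{types-closures} (types over closed sets are determined by the isomorphism type of $\langle\cl(X\bar a)\rangle$ over $\langle X\rangle$) applied with $X=\mathbb{M}$, combined with Lemma \ref{lem:4.4}(2)--(3) to pin down $\cl(\bar a\mathbb{M})$ as the free amalgam $\cl(\bar a\mathbb{M})\oplus_{\mathbb{M}}\mathbb{M}$ — so its $\mathcal{L}_p$-structure is forced by the $\mathcal{L}$-structure of $\cl(\bar a\mathbb{M})$ over $\mathbb{M}$ plus the finitely-many colours not already in $\mathbb{M}$. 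Once that identification is in hand the counting is routine, and the rational/irrational distinction plays no role here (it only enters for the strong-dependence results that follow).
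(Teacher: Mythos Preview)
Your approach is the same as the paper's --- count coheir extensions of a $1$-type over $\mathbb{M}$ using Lemma~\ref{lem:4.5} (coheirs are $D$-independent), Lemma~\ref{lem:4.4} (so $\cl(a\mathbb{N})=\cl(a\mathbb{M})\oplus_{\mathbb{M}}\mathbb{N}$), and Corollary~\ref{types-closures} (types are determined by closures) --- and the overall argument is correct.

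One sharpening will make your write-up cleaner and removes the vague ``colour data'' bookkeeping. The object whose $\mathcal{L}$-coheir you should track is not $a$ but the tuple $\cl(a\mathbb{M})$: since $\cl(a\mathbb{M})\subseteq\Acl(a\mathbb{M})$, the type $\tp(\cl(a\mathbb{M})/\mathbb{N})$ is itself a coheir of $\tp(\cl(a\mathbb{M})/\mathbb{M})$, and hence so is its $\mathcal{L}$-reduct. The point you almost state but don't quite is that the ``extra colour information'' is \emph{already fixed by $p$}: if $a_1,a_2$ both realise $p$ then $\cl(a_1\mathbb{M})$ and $\cl(a_2\mathbb{M})$ are $\mathcal{L}_p$-isomorphic over $\mathbb{M}$ by Corollary~\ref{types-closures}, so the colours on $\cl(a_i\mathbb{M})\setminus\mathbb{M}$ are determined. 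Combined with $\cl(a_i\mathbb{N})=\cl(a_i\mathbb{M})\oplus_{\mathbb{M}}\mathbb{N}$, this yields that two coheir extensions $q_1,q_2$ of $p$ agree iff $\tp_{\mathcal{L}}(\cl(a_1\mathbb{M})/\mathbb{N})=\tp_{\mathcal{L}}(\cl(a_2\mathbb{M})/\mathbb{N})$; the bound $2^{|\mathbb{M}|}$ then comes directly from NIP of $T$ with no separate colour count needed. Your item (ii) is thus redundant, and your item (i) should be about $\cl(a\mathbb{M})$ rather than $a$ alone --- the $\mathcal{L}$-type of $a$ over $\mathfrak{C}$ by itself does not recover how the (possibly infinite) closure sits over $\mathbb{N}$.
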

\begin{proof}
Let
$\mathbb{M}$ and
$\mathbb{N}$ be models of
$\mathbb{T}_{\alpha}$ such that
$\mathbb{M}\leqslant \mathbb{N}\leqslant \mathfrak{C}$. Assume that
$q_{1}(x), q_{2}(x)\in S_{1}(\mathbb{N})$ are two coheir extensions of a type
$p(x)\in S_{1}(\mathbb{M})$.
If
$a_{1}\models q_{1}(x)$
and
$a_{2}\models  q_{2}(x)$
then by Corollary \ref{types-closures}
there is an $\mathcal{L}_{p}$-isomorphism between
$\cl(a_{1}\mathbb{M})$ and $\cl(a_{2}\mathbb{M})$ mapping
$a_{1}$ to $a_{2}$ and fixing $\mathbb{M}$ pointwise, and
therefore
$\tp(\cl(a_{1}\mathbb{M})/\mathbb{M})=\tp(\cl(a_{2}\mathbb{M})/\mathbb{M})$.
Since $\cl(a_{i}\mathbb{M})$
is contained in the $\mathcal{L}_{p}$-algebraic closure of
$a_i\mathbb{M}$,
  it  follows that
$\tp(\cl(a_{i}\mathbb{M})/\mathbb{N})$ is
a coheir of $\tp(\cl(a_{i}\mathbb{M})/\mathbb{M})$.
Thus by Lemma  \ref{lem:4.4} and Lemma \ref {lem:4.5} we have
$\cl(a_{i}\mathbb{N})=\cl(a_{i}\mathbb{M})\oplus_{\mathbb{M}} \mathbb{N}$.
This means that
$q_{1}(x)=q_{2}(x)$ if and only if there is an
$\mathcal{L}$-isomorphism between
$\cl(a_{1}\mathbb{M})$ and  $\cl(a_{2}\mathbb{M})$
mapping
$a_{1}$
to
$a_{2}$ and fixing $\mathbb{N}$ pointwise.
In other words,
$q_{1}(x)=q_{2}(x)$ if and only if  $\tp_{\mathcal{L}}(\cl(a_{1}\mathbb{M})/\mathbb{N})=\tp_{\mathcal{L}}(\cl(a_{2}\mathbb{M})/\mathbb{N})$.
\par
Now since $T$ is
dependent,
there are at most
$2^{|\mathbb{M}|}$ $\mathcal{L}$-coheirs of
$\tp_{\mathcal{L}}(\cl(a_{1}\mathbb{M})/\mathbb{M})$ over
$\mathbb{N}$.
On the other hand by the
above
argument
any  coheir extension of
$p(x)$
is uniquely determined by
$\tp_{\mathcal{L}}(\cl(a\mathbb{M})/\mathbb{N})$
for any realization
$a$ of
$p(x)$. Therefore there are at most
$2^{|\mathbb{M}|}$ coheirs of
$p(x)$.
\end{proof}
By  the proof of the theorem above,
the number of coheirs is the same in $T$
and $\mathbb{T}_\alpha$.
This gives also the following corollary on the stability.
\begin{cor}
If
$T$
is stable, then so is
$\mathbb{T}_{\alpha}$.
\end{cor}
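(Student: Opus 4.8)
The plan is to push through exactly the coheir count behind \Cref{th:4.7}, observing that it actually shows the number of coheir extensions of a $1$-type over a model of $\mathbb{T}_{\alpha}$ is bounded by the number of coheir extensions of a corresponding $\mathcal{L}$-type over the same model, and then to invoke the standard characterisation of stability: a complete theory is stable iff every $1$-type over every model has a unique global coheir. Since $T$ is stable this uniqueness holds on the $\mathcal{L}$-side, and I will deduce it for $\mathbb{T}_{\alpha}$.

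Concretely, let $\mathbb{M}\models\mathbb{T}_{\alpha}$ (a $\leqslant$-closed substructure of $\mathfrak{C}$), let $p(x)\in S_{1}(\mathbb{M})$, and suppose $q_{1},q_{2}\in S_{1}(\mathfrak{C})$ are coheir extensions of $p$, realised by $a_{1},a_{2}$. As in the proof of \Cref{th:4.7}, from $\tp(a_{1}/\mathbb{M})=\tp(a_{2}/\mathbb{M})=p$ and \Cref{types-closures} we get an $\mathcal{L}_{p}$-isomorphism $\langle\cl(a_{1}\mathbb{M})\rangle\cong_{\langle\mathbb{M}\rangle}\langle\cl(a_{2}\mathbb{M})\rangle$ sending $a_{1}$ to $a_{2}$; enumerating $\cl(a_{i}\mathbb{M})\setminus\mathbb{M}$ compatibly with it as tuples $\bar{d}_{i}$ (with $a_{i}$ among the $\bar{d}_{i}$) yields $\tp_{\mathcal{L}}(\bar{d}_{1}/\mathbb{M})=\tp_{\mathcal{L}}(\bar{d}_{2}/\mathbb{M})=:r$ with the same colouring pattern. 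Since $\cl(a_{i}\mathbb{M})\subseteq\Acl(a_{i}\mathbb{M})$, the type $\tp(\bar{d}_{i}/\mathfrak{C})$ is again a coheir of $\tp(\bar{d}_{i}/\mathbb{M})$; moreover, for any small model $\mathbb{N}$ with $\mathbb{M}\leqslant\mathbb{N}\leqslant\mathfrak{C}$, \Cref{lem:4.4} and \Cref{lem:4.5} give $\cl(a_{i}\mathbb{N})=\cl(a_{i}\mathbb{M})\oplus_{\mathbb{M}}\mathbb{N}$, so, exactly as in \Cref{th:4.7}, $q_{1}|_{\mathbb{N}}=q_{2}|_{\mathbb{N}}$ iff $\tp_{\mathcal{L}}(\bar{d}_{1}/\mathbb{N})=\tp_{\mathcal{L}}(\bar{d}_{2}/\mathbb{N})$. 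Letting $\mathbb{N}$ vary over such models, $q_{1}=q_{2}$ iff $\tp_{\mathcal{L}}(\bar{d}_{1}/\mathfrak{C})=\tp_{\mathcal{L}}(\bar{d}_{2}/\mathfrak{C})$. But these two $\mathcal{L}$-types are both global coheirs of the single type $r\in S_{\mathcal{L}}(\mathbb{M})$, and $T$ is stable, so they coincide; hence $q_{1}=q_{2}$. Consequently every $1$-type over every model of $\mathbb{T}_{\alpha}$ has at most one global coheir, and $\mathbb{T}_{\alpha}$ is stable.

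The work is almost entirely in re-reading the proof of \Cref{th:4.7}: one must check that replacing the auxiliary small model there by arbitrary small models inside the monster changes nothing, that coheir-ness transfers from $a_{i}\mathbb{M}$ to the $\mathcal{L}_{p}$-algebraic-over-$a_{i}\mathbb{M}$ tuple $\cl(a_{i}\mathbb{M})$, and that $\dim$-freeness of $\cl(a_{i}\mathbb{M})$ over $\mathbb{M}$ together with \Cref{lem:4.4} genuinely reconstructs $q_{i}$ from $\tp_{\mathcal{L}}(\bar{d}_{i}/\mathfrak{C})$ plus the colours. This is the only delicate point; the rest is bookkeeping together with the quoted characterisation of stability. (Alternatively one can count types directly: by \Cref{types-closures} and \Cref{rem:2.17} a $1$-type over $\mathbb{M}$ is coded by the $\mathcal{L}$-type of the at-most-countable tuple $\cl(a\mathbb{M})\setminus\mathbb{M}$ together with its colouring, so if $|\mathbb{M}|=\lambda$ with $T$ being $\lambda$-stable and $\lambda^{\aleph_{0}}=\lambda$ there are at most $\lambda$ of them, and $\mathbb{T}_{\alpha}$ is $\lambda$-stable.)
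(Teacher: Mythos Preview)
Your proposal is correct and follows essentially the same approach as the paper: both arguments observe that the proof of \Cref{th:4.7} actually shows a coheir extension of $p(x)$ is determined by the $\mathcal{L}$-coheir $\tp_{\mathcal{L}}(\cl(a\mathbb{M})/\mathbb{N})$, and then invoke the fact that stability of $T$ forces this $\mathcal{L}$-coheir to be unique, whence $\mathbb{T}_{\alpha}$ has unique coheirs and is stable. Your write-up simply unpacks in more detail what the paper compresses into a single sentence.
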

\begin{proof}
If $T$ is stable, then
for any
$\mathbb{M}\leqslant \mathbb{N}\models \mathbb{T}_{\alpha}$ every $1$-type over
$\mathbb{M}$ has a unique coheir extension to $\mathbb{N}$, and
hence $\mathbb{T}_\alpha$ is also stable.
\end{proof}
Our next result is that
when $\alpha$ is rational, then the strong dependence also
transfers from $T$ to $\mathbb{T}_\alpha$.
Before stating the result,
let us recall
some basic definitions concerning
the strong dependence.
\par
Let
$T^{'}$ be an
  $\mathcal{L}^{'}$-theory
and
$X$  an index set.
Let
$I_{t}=\{\bar{a}_{ti}:\hspace*{2mm}i<\omega\}$,
for
$t\in X$,  be  sequences of tuples in a monster model of
$T^{'}$  such that
$|\bar{a}_{ti}|=|\bar{a}_{sj}|$, for each
$i, j\in \omega$ and $ t, s\in X$.
For  a small set of parameters
$A$, we say that sequences
$\{I_{t}:\hspace*{2mm} t \in X\}$
are \textit{ mutually
indiscernible} over
$A$ if each
sequence
$I_{t}$ is indiscernible over
$ A \cup \bigcup \{I_{s}:\hspace*{2mm}s\neq t\}  $.
\par
Let
$\Sigma(\bar{x})$ be a partial type over
$A$ and
$\kappa$ be a
(finite or infinite) cardinal. We say that
dp-rk$(\Sigma, A) <\kappa$ if for each family
$\{I_{t}:\hspace*{2mm} t<\kappa \}$ of mutually indiscernible sequences over
$A$ and for any realization
$\bar{b}$
of
$\Sigma$ , there is
$t < \kappa$ such that
$I_{t}$ is indiscernible over
$A\bar{b}$.
\par
A dependent theory $T^{'}$ is \textit{strongly dependent} if for
$\bar{x}$ of any length,
dp-rk$(\bar{x} =\bar{x}, \emptyset) <\aleph_{0}$  (for an account on strong-dependence see \cite{P15}).
\par
The following fact states some equivalent forms of strong dependence which are use for \Cref{th:4.13}.
\begin{fact}[ \cite{D16, Sh}]\label{fact:4.0.22}
The following  are equivalent for a complete theory $T^{'}$.
\begin{enumerate}
\item
$T^{'}$ is strongly dependent.
\item
For every indiscernible sequence $I=\{\bar{b}_{i}:\hspace*{3mm}i<\kappa\}$ with tuples $\bar{b}_{i}$ at most countable,
and every finite set $C$, there is a convex equivalence relation $\sim$ on $\kappa$ with finitely
many classes and such that $\tp(\bar{b}_{i}/C)$  depends only on the $\sim$-class of $i$.
In other words, the set
$\{\tp(\bar{b}_i/C):\hspace*{3mm}i\in \kappa\}$ is finite and
for each $i\in  \kappa$ the set
$\kappa (i)=\{j:\hspace*{3mm}\tp(\bar{b}_j/C)=\tp(\bar{b}_i/C)\}$
is a convex subset of $\kappa$.
\item
For every indiscernible sequence $\{\bar{b}_{i}:\hspace*{3mm}i\in I\}$ with tuples $\bar{b}_{i}$ at most countable,
and every finite set $C$, there is a convex equivalence relation $\sim$ on $I$ with finitely
many classes and such that $\{\bar{b}_{i}:\hspace*{3mm} i\in j/\sim\}$ is  indiscernible over $C$. In other words, in the above item
the sequence $I_i=\{\bar{b}_j:\hspace*{3mm} j\in \kappa (i)\}$ is indiscernible over $C$.
\end{enumerate}
\end{fact}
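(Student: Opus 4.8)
The plan is to prove the cycle $(1)\Rightarrow(3)\Rightarrow(2)\Rightarrow(1)$, using throughout the reformulation of $\dprk$ in terms of mutually indiscernible sequences recalled just before the statement. The implication $(3)\Rightarrow(2)$ is immediate and I would dispose of it first: if each $\sim$-class $\{\bar b_i:i\in j/\sim\}$ is indiscernible over $C$, then $\tp(\bar b_i/C)$ is already constant on $\sim$-classes, so the same $\sim$ witnesses $(2)$.

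For $(1)\Rightarrow(3)$ I would invoke the standard fact from the theory of $\dprk$ that a finite parameter tuple cuts an indiscernible sequence into boundedly many convex indiscernible pieces: precisely, if $\dprk(\bar c/A)\leqslant n$ then every $A$-indiscernible sequence $I$ (of tuples of any length) partitions into at most $g(n)$ convex pieces each of which is indiscernible over $A\bar c$ (see \cite{P15} and the references there; for $A=\emptyset$ this is exactly the conclusion of $(3)$). Since $T'$ is strongly dependent and $C=\bar c$ is finite we have $\dprk(\bar c,\emptyset)\leqslant\dprk(\bar x=\bar x,\emptyset)<\aleph_0$, so this fact applies and yields $(3)$. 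If one prefers a self-contained argument, one runs the contrapositive of that fact directly: from failure of $(3)$ one extracts, for each $N$, an $N$-element family of pairwise ``block-separated'' finite sub-configurations of $I$, each non-$\bar c$-indiscernible; such a family is automatically mutually indiscernible over $\emptyset$ (any increasing tuple drawn across the blocks, and any re-choice respecting the blocks, are increasing tuples of the same length, hence have the same type), so $\dprk(\tp(\bar c),\emptyset)\geqslant N$ for all $N$, contradicting strong dependence.

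The substance of the proof is $(2)\Rightarrow(1)$, which I would prove contrapositively. If $T'$ is not strongly dependent then, by the standard characterisation (\cite{P15,Sh}), there is an inp-pattern of infinite depth in one variable $x$: formulas $\varphi_t(x;\bar y_t)$, an array $(\bar a_{t,j})_{t,j<\omega}$ with mutually indiscernible rows, and integers $k_t$, such that each row $\{\varphi_t(x;\bar a_{t,j}):j<\omega\}$ is $k_t$-inconsistent while every path is consistent. I would first enlarge each row to length $\kappa$ by compactness, with $\kappa$ large enough to apply Erd\H{o}s--Rado, and then extract $\omega$ columns that, read as countable tuples, form an indiscernible sequence; restricting the rows to these columns keeps the rows mutually indiscernible and preserves both the $k_t$-inconsistencies and the consistency of every path. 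Relabelling the extracted columns $0,1,2,\dots$ and setting $\bar C_j=(\bar a_{t,j})_{t<\omega}$, the sequence $(\bar C_j)_{j<\omega}$ is an indiscernible sequence of at most countable tuples. Let $b$ realise the diagonal path $\{\varphi_t(x;\bar a_{t,t}):t<\omega\}$ --- a genuine path through the pattern, hence consistent --- and put $C=\{b\}$. For each $t$ set $P_t=\{j<\omega:\varphi_t(b;\bar a_{t,j})\text{ holds}\}$; then $t\in P_t$, and $|P_t|<k_t$ by $k_t$-inconsistency. Since the set $R_j=\{t<\omega:j\in P_t\}$ is recovered from $\tp(\bar C_j/b)$ --- each condition $t\in R_j$ being expressed by an $\mathcal L(b)$-formula evaluated at $\bar C_j$ --- and $j\in R_j$, an equality $R_j=R_{j'}$ with $j<j'$ forces $j'\in P_j$; hence each value of $R_j$ is attained by fewer than $k_j$ indices above $j$, so $R_j$, and therefore $\tp(\bar C_j/b)$, takes infinitely many values as $j$ ranges over $\omega$. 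Consequently no finite convex partition of $(\bar C_j)_{j<\omega}$ can be constant on types over $C$, so $(2)$ fails.

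The step I expect to be the main obstacle is $(2)\Rightarrow(1)$: I will need to check carefully that the compactness blow-up of the columns and the Erd\H{o}s--Rado extraction of an indiscernible subsequence of columns can be carried out while simultaneously keeping the rows mutually indiscernible and keeping the $k_t$-inconsistency and the consistency of every path intact; and that the diagonal choice of $b$, together with the $k_t$-inconsistency (used via $t\in P_t$ and $|P_t|<k_t$), really forces infinitely many types along the column sequence rather than only finitely many. The only inputs beyond the definitions are the reformulation of $\dprk$ in terms of mutually indiscernible sequences (recalled in the excerpt) and the equivalence between failure of strong dependence and the existence of an infinite inp-pattern in one variable, both standard (\cite{P15,Sh}).
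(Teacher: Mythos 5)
The paper does not prove \Cref{fact:4.0.22}; it is quoted without proof as a known result from \cite{D16,Sh}. So there is no house proof to compare against, and your attempt has to be judged on its own terms. Your overall plan --- the cycle $(1)\Rightarrow(3)\Rightarrow(2)\Rightarrow(1)$, with $(3)\Rightarrow(2)$ trivial and $(1)\Rightarrow(3)$ being the standard ``a finite tuple cuts an indiscernible sequence into boundedly many convex indiscernible blocks when its dp-rank is finite'' fact --- is reasonable and matches what one would expect from those references.

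There is, however, a genuine error in $(2)\Rightarrow(1)$. You claim that failure of strong dependence gives an \emph{inp}-pattern of infinite depth. That is false in general: ``strongly dependent'' entails NIP, so a theory with IP is automatically not strongly dependent, yet it may have \emph{finite} burden and hence no inp-pattern of depth $2$, let alone an infinite one (the theory of the random graph is the standard witness --- it has IP, is simple of SU-rank $1$, and has burden $1$). The correct witness to failure of strong dependence is an \emph{ict}-pattern of infinite depth (this is essentially the definition of $\kappa_{\mathrm{ict}}>\aleph_0$, stretched by compactness), and it covers both the ``NIP of infinite dp-rank'' case, where burden does coincide with dp-rank, and the ``IP'' case, where your reduction breaks down. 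The good news is that switching to ict-patterns actually simplifies the combinatorics of your last step: with $b$ realizing the diagonal of an ict-pattern one has $\varphi_t(b;\bar a_{t,j})$ if and only if $j=t$, so $P_t=\{t\}$, $R_j=\{j\}$, and the types $\tp(\bar C_j/b)$ are pairwise distinct outright, without the bookkeeping involving $k_t$. The other delicate point you flag --- extracting a subsequence of columns that is indiscernible while keeping the rows mutually indiscernible and preserving the row-inconsistency and the path-consistency --- is handled by the standard Erd\H{o}s--Rado extraction applied to the column sequence, since all the preserved properties are local (finitary) and hence recorded in the EM-type; so that part is fine, just needs to be stated in terms of ict-patterns.
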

\begin{lem}\label{lem:4.0.23}
Let $I=\{\bar{a}_{i}:\hspace*{3mm}i< \kappa\}$ be an $\mathcal{L}_p$-indiscernible sequence over $A$. Then
\begin{enumerate}
\item
$I$ is $\mathcal{L}_{p}$-indiscernible also  over $\cl(A)$.
\item
There is a suitable enumeration
$\bar{b}_{i}=\bar{a}_{i}\bar{c}_{i}$
 of
each $\langle \cl(A\bar{a}_{i})\rangle$ such that
$I^{'}=\{\bar{b}_{i}:\hspace*{3mm}i<\kappa\}$ is  an $\mathcal{L}_p$-indiscernible sequence
over $A$.
\end{enumerate}
\end{lem}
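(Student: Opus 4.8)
The plan is to derive both clauses from one construction: enlarge $I$ to a much longer $\mathcal{L}_p$-indiscernible sequence over $A$, extract from it a sequence that is indiscernible over the larger parameter set at hand, check that the extracted sequence has the same Ehrenfeucht--Mostowski (EM-) type over $A$ as $I$, and then pull it back onto $I$ along an $A$-automorphism of $\mathfrak{C}$, transporting the extra data. Two facts make this run. First, $\cl$ and $\cl^{n}$ are $\mathrm{Aut}(\mathfrak{C}/A)$-invariant: if $\sigma$ fixes $A$ pointwise then $\sigma(\cl(A))=\cl(A)$, and $\sigma(\cl(A\bar{a}))=\cl(A\bar{a}')$ whenever $\sigma(\bar{a})=\bar{a}'$ (immediate from the definitions of $\leqslant_{i}$ and $\cl$). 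Second, an $A$-elementary bijection between small subsets of $\mathfrak{C}$ extends to an element of $\mathrm{Aut}(\mathfrak{C}/A)$; applied to $\bar{a}_i\mapsto\bar{b}_i$ this gives: any two $\mathcal{L}_p$-indiscernible sequences over $A$ sharing the same index order and the same EM-type over $A$ are conjugate by some $\sigma\in\mathrm{Aut}(\mathfrak{C}/A)$. Throughout we assume $\kappa\geqslant\aleph_{0}$; for infinite $\kappa$ no $\bar{a}_i$ can lie in $\Acl(A)$ (an infinite indiscernible sequence of distinct $\mathcal{L}_p$-algebraic-over-$A$ tuples is impossible), which is exactly what keeps (1) true — for finite $\kappa$ it already fails when the $\bar{a}_i$ are interchangeable elements of $\cl(A)$.

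For (1): $\cl(A)$ is small (finite character of $\cl$ together with the bound on $\cl^{n}$ from \Cref{rem:2.17}), so pick an $\mathcal{L}_p$-indiscernible sequence $(\bar{a}_i)_{i<\lambda}$ over $A$ of large length with the same EM-type over $A$ as $I$, and extract from it (a standard consequence of Ramsey's theorem and compactness, applied over the small set $\cl(A)$) a $\cl(A)$-indiscernible sequence $(\bar{a}_i')_{i<\kappa}$ of order type $\kappa$. Since the stretched sequence is $A$-indiscernible, all of its increasing $n$-tuples realize one single complete type over $A$; hence so do all increasing $n$-tuples of $(\bar{a}_i')_{i<\kappa}$, i.e. $(\bar{a}_i')_{i<\kappa}$ has the same EM-type over $A$ as $I$. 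By the second fact there is $\sigma\in\mathrm{Aut}(\mathfrak{C}/A)$ with $\sigma(\bar{a}_i')=\bar{a}_i$ for all $i$, so $I=\sigma\big((\bar{a}_i')_{i<\kappa}\big)$ is indiscernible over $\sigma(\cl(A))=\cl(A)$.

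For (2): all $\bar{a}_i$ have the same type over $A$, so fix $\sigma_i\in\mathrm{Aut}(\mathfrak{C}/A)$ with $\sigma_i(\bar{a}_0)=\bar{a}_i$ and an enumeration $\bar{b}_0=\bar{a}_0\bar{c}_0$ of $\langle\cl(A\bar{a}_0)\rangle$ (over $\langle A\rangle$); then $\bar{b}_i:=\sigma_i(\bar{b}_0)=\bar{a}_i\bar{c}_i$ enumerates $\langle\cl(A\bar{a}_i)\rangle$ and $\bar{b}_i\equiv_A\bar{b}_0$ for every $i$ (when $\alpha$ is irrational the $\bar{c}_i$ are countably infinite, since $\cl(A\bar{a}_i)$ is, but they are still small, which changes nothing). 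Carry this out along a large $A$-indiscernible stretch of $I$ and extract an $A$-indiscernible subsequence $(\bar{b}_i')_{i<\kappa}=(\bar{a}_i'\bar{c}_i')_{i<\kappa}$. As in (1) its $\bar{a}$-part has the same EM-type over $A$ as $I$; and each single $\bar{b}_i'$ realizes the complete type $\tp(\bar{b}_0/A)$, so by homogeneity $\bar{b}_i'=\tau_i(\bar{b}_0)$ for some $\tau_i\in\mathrm{Aut}(\mathfrak{C}/A)$, whence $\bar{b}_i'$ still enumerates $\langle\cl(A\bar{a}_i')\rangle$ by invariance of $\cl$. Choosing $\sigma\in\mathrm{Aut}(\mathfrak{C}/A)$ with $\sigma(\bar{a}_i')=\bar{a}_i$ and putting $\bar{c}_i:=\sigma(\bar{c}_i')$, $\bar{b}_i:=\bar{a}_i\bar{c}_i$, the sequence $(\bar{b}_i)_{i<\kappa}=\sigma\big((\bar{b}_i')_{i<\kappa}\big)$ is $A$-indiscernible and $\bar{b}_i$ enumerates $\sigma(\langle\cl(A\bar{a}_i')\rangle)=\langle\cl(A\bar{a}_i)\rangle$, as required.

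The essential difficulty is conceptual rather than computational. One is tempted to take the $A$-automorphism already witnessing the indiscernibility of $I$ and hope it fixes $\cl(A)$ pointwise (resp.\ sends the chosen enumeration of one closure to that of the next) — but an element of $\cl(A)\setminus A$ is in general only $\mathcal{L}_p$-algebraic over $A$, not in $\dcl_{\mathcal{L}_p}(A)$ (e.g.\ a closure may contain several interchangeable colored points forming a minimal pair with $A$), so that automorphism need not do what one wants. Forcing the argument through a freshly extracted indiscernible sequence over $\cl(A)$ and exploiting the $\mathrm{Aut}(\mathfrak{C}/A)$-invariance of $\cl$ is precisely how this is dodged; the only additional point needing attention is that in (2) the property ``$\bar{b}_i'$ enumerates $\langle\cl(A\bar{a}_i')\rangle$'' survives extraction — which it does because it is $\mathrm{Aut}(\mathfrak{C}/A)$-invariant and single tuples of an $A$-indiscernible sequence carry a unique complete type over $A$.
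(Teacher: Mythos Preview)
Your argument is correct and follows essentially the same route as the paper: for (1), produce an $\mathcal{L}_p$-indiscernible sequence over $\cl(A)$ with the same EM-type over $A$ as $I$ and transport it back onto $I$ via an element of $\mathrm{Aut}(\mathfrak{C}/A)$, using that $\cl(A)$ is $\mathrm{Aut}(\mathfrak{C}/A)$-invariant; for (2), enumerate each $\langle\cl(A\bar{a}_i)\rangle$ by pushing a fixed enumeration along $A$-automorphisms, extract an $A$-indiscernible sequence, check that the extracted tuples still enumerate the right closures, and pull back. The one point where you are a bit cleaner than the paper is the last verification: the paper argues via finite pieces $\cl^{n}$ and the EM-type, whereas you observe directly that ``$\bar{b}$ enumerates $\langle\cl(A\bar{a})\rangle$, $\bar{a}$ the initial segment of $\bar{b}$'' is an $\mathrm{Aut}(\mathfrak{C}/A)$-invariant property of a single tuple and hence is preserved once you know $\tp(\bar{b}_i'/A)=\tp(\bar{b}_0/A)$.
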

\begin{proof}
1. Let
$\{\bar{c}_i:\hspace*{3mm} i<\kappa\}$ be an $\mathcal{L}_p$-indiscernible sequence over $\cl(A)$ that realizes the $EM$-type of $I$.
This sequence has the same type over $A$ as the type of $I$,  and therefore, there is an $\mathcal{L}_p$-automorphism $\sigma:\mathfrak{C}\to\mathfrak{C}$ fixing $A$ pointwise such that $\sigma(\bar{c}_i)=\bar{a}_i$, for each $i<\kappa$. So,  $\{\bar{a}_i:\hspace*{3mm} i<\kappa\}$ is $\mathcal{L}_p$-indiscernible over $\sigma(\cl(A))=\cl(A)$.
\par
2. As $\langle\cl(A\bar{a}_i)\rangle \subseteq \dcl(\cl(A\bar{a}_i))$, it is enough to prove 2 for
the sequence $I'= \{\cl(A\bar{a}_i): \hspace*{3mm}i<\kappa\}$. Since
$\bar{a}_i$ and $\bar{a}_0$ have the same type
over $A$,
enumerate each
$\cl(A\bar{a}_i)$  in accordance with $ \cl(A\bar{a}_{0})$ using
the
$\mathcal{L}_p$-automorphism $\sigma_i$
that fixes $A$ pointwise and sends $\bar{a}_{0}$ to $\bar{a}_{i}$.
Denote this enumeration by $\bar{d}_{i}$ and put $J=\{\bar{d}_{i}:\hspace*{3mm} i< \kappa\}$.
By standard application of Ramsey's theorem,
let $I'=\{\bar{b}_{i}:\hspace*{3mm} i<\kappa\}$
be the $A$-indiscernible sequence
satisfying the
$EM$-type of $J$ over $A$.
Since each $\bar{b}_i$ contains an automorphic image of
$\bar{a}_i$, by strong homogeneity of $\mathfrak{C}$, we may assume that each $\bar{b}_i$ includes $\bar{a}_i$.
%
 \par
We now need  to show that $\bar{b}_i$ is indeed an enumeration of $\cl(A\bar{a}_i)$.
Recall that $\bar{d}_i$ is already an enumeration of
$\cl(A\bar{a}_i)$.
Let $A_0$ be a finite subset of $A$ and
$(s_1,\dots, s_l)$ be a finite subtuple of $\bar{a}_0$. Then
$\cl^n(A_0s_1,\dots,s_l)=\{e_1,\dots,e_k\}$
is a finite subtuple of $\bar{d}_0$.
 Thus, $|\cl^n(A_0, \sigma_i(s_1),\dots,\sigma_i(s_l))|=k$, for each $i<\kappa$.
Now the following two facts are
covered by the
 $EM$-type of $J$. First that for each $i<\kappa$, $\bar{b}_i$ includes $\cl(A\bar{a}_i)$. Second,
 that for any finite subset $A_0$ of $A$ and finite subtuple $(t_1,\dots, t_l)$ of $\bar{a}_i$, there is a finite subtuple  $(f_1,\dots,f_k)$ of $\bar{b}_i$ such that $\cl^n(A_0 t_1,\dots, t_l)=\{f_1,\dots,f_k\}$. Hence $\bar{b}_i$ is an enumeration  of $\cl(A\bar{a}_i)$, as required.
\end{proof}
\begin{theo}($\alpha$ rational)\label{th:4.13}
If
$T$
is
strongly dependent, then so is
$\mathbb{T}_{\alpha}$.
\end{theo}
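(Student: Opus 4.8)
The plan is to verify the characterization of strong dependence in \Cref{fact:4.0.22}(3). By \Cref{th:4.7} the theory $\mathbb{T}_{\alpha}$ is already dependent, so it remains to show: for every $\mathcal{L}_{p}$-indiscernible sequence $I=\{\bar{b}_{i}:i<\kappa\}$ with at most countable tuples and every finite $C\subseteq\mathfrak{C}$, there is a convex equivalence relation $\sim$ on $\kappa$ with finitely many classes such that $\{\bar{b}_{i}:i\in j/\!\sim\}$ is $\mathcal{L}_{p}$-indiscernible over $C$ for each $j$. Two harmless reductions come first. Since $\alpha$ is rational, $\cl(C)$ is finite by \Cref{rem:2.17}, and indiscernibility over $\cl(C)$ implies indiscernibility over $C$, so we may assume $C$ is closed. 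Second, by \Cref{lem:4.0.23}(2) applied over $\emptyset$ we may replace each $\bar{b}_{i}$ by an enumeration of $\langle\cl(\bar{b}_{i})\rangle$: the new sequence is still $\mathcal{L}_{p}$-indiscernible, and a convex $\sim$ that works for it works for the original. Thus we may also assume $\bar{b}_{i}=\langle\cl(\bar{b}_{i})\rangle$; in particular $\bar{b}_{i}\leqslant\mathfrak{C}$, the coloring of $\bar{b}_{i}$ is the same for every $i$, and $I$ is $\mathcal{L}$-indiscernible over $\emptyset$.

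The first real step is to split $I$ using the strong dependence of $T$. Since $\mathfrak{C}\restriction\mathcal{L}$ is a saturated model of the strongly dependent theory $T$, applying \Cref{fact:4.0.22}(3) in $\mathcal{L}$ to the $\mathcal{L}$-indiscernible sequence $I$ and the finite set $C$ yields a convex equivalence relation $\sim_{0}$ on $\kappa$ with finitely many classes, each of which is $\mathcal{L}$-indiscernible over $C$. Since the common refinement of two convex equivalence relations with finitely many classes again has finitely many classes, it suffices to show that each $\sim_{0}$-class, after possibly one further finite convex refinement, is $\mathcal{L}_{p}$-indiscernible over $C$. So from now on fix a $\sim_{0}$-class and assume $I$ is $\mathcal{L}$-indiscernible over $C$ while still $\mathcal{L}_{p}$-indiscernible over $\emptyset$.

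The heart of the proof is to promote $\mathcal{L}$-indiscernibility over $C$ to $\mathcal{L}_{p}$-indiscernibility over $C$. By \Cref{types-closures} (which, by the remark following it, holds also for rational $\alpha$), $\tp_{\mathcal{L}_{p}}(\bar{b}_{i_{1}}\cdots\bar{b}_{i_{n}}/C)$ is coded by the $\mathcal{L}_{p}$-isomorphism type of $\langle\cl(C\bar{b}_{i_{1}}\cdots\bar{b}_{i_{n}})\rangle$ over $\langle C\rangle$, with $\bar{b}_{i_{1}}\cdots\bar{b}_{i_{n}}$ marked. I would show that this isomorphism type does not depend on the choice of $i_{1}<\dots<i_{n}$ (after a further finite convex refinement), as follows: the underlying $\mathcal{L}$-isomorphism type of $\langle C\bar{b}_{i_{1}}\cdots\bar{b}_{i_{n}}\rangle$ over $\langle C\rangle$ is determined by $\qftp_{\mathcal{L}}(\bar{b}_{i_{1}}\cdots\bar{b}_{i_{n}}/C)$, which is constant by $\mathcal{L}$-indiscernibility over $C$; the colours internal to each $\bar{b}_{i_{k}}$ and to the $\mathcal{L}$-definable closure of $\bar{b}_{i_{1}}\cdots\bar{b}_{i_{n}}$ are constant by $\mathcal{L}_{p}$-indiscernibility over $\emptyset$; and -- here rationality of $\alpha$ enters decisively -- the remaining coloured elements attached to the finite set $C$ (those in the $\mathcal{L}$-definable closure of the spine that mention $C$ essentially) form a set whose size is bounded uniformly in terms of $|C|$ and $\alpha$, using $\bar{b}_{i_{k}}\leqslant\mathfrak{C}$, submodularity, and the positivity of $\delta$ over a closed set; this bounded data can be recorded by a tuple of uniformly bounded length, whose pattern along the sequence takes finitely many values in convex blocks, giving the final refinement. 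Finally, the genuine closure $\cl(C\bar{b}_{i_{1}}\cdots\bar{b}_{i_{n}})\setminus\langle C\bar{b}_{i_{1}}\cdots\bar{b}_{i_{n}}\rangle$ is the generic closure of the $\mathcal{L}_{p}$-structure just pinned down, so it has a well-defined isomorphism type over that structure by richness of $\mathfrak{C}$ and completeness of $\mathbb{T}_{\alpha}$. Hence $\tp_{\mathcal{L}_{p}}(\bar{b}_{i_{1}}\cdots\bar{b}_{i_{n}}/C)$ is constant on the block, so the block is $\mathcal{L}_{p}$-indiscernible over $C$; combining this refinement with $\sim_{0}$ produces the required $\sim$.

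The step I expect to be the main obstacle is the last paragraph, in two respects: (i) showing that the colours $\cl(C\bar{b}_{i_{1}}\cdots\bar{b}_{i_{n}})$ adds beyond the spine $C\bar{b}_{i_{1}}\cdots\bar{b}_{i_{n}}$ are controlled by a \emph{uniformly bounded} and \emph{convexly distributed} amount of data -- this is precisely where rationality of $\alpha$ is used (the closure of a finite set is finite), and it is exactly this that fails for irrational $\alpha$, consistently with $\mathbb{T}_{\alpha}$ not being strongly dependent there; making it rigorous will likely require re-extracting an indiscernible sequence from the enumerated closures, in the spirit of \Cref{lem:4.0.23}; and (ii) verifying that adjoining the closed finite set $C$ to the already-closed $\bar{b}_{i}$'s cannot create type-distinguishing closure configurations depending on more than the listed invariants, i.e. that the generic closure inside $\mathfrak{C}$ is well defined up to isomorphism over the generating $\mathcal{L}_{p}$-structure. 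Everything else -- the two reductions, the appeal to strong dependence of $T$ for the $\mathcal{L}$-reduct, and the bookkeeping of convex equivalence relations -- is routine.
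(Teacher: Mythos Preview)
Your reductions and the initial application of strong dependence of $T$ are fine; the gap is exactly where you flag it, in promoting $\mathcal{L}$-indiscernibility over $C$ to $\mathcal{L}_{p}$-indiscernibility over $C$, and the sketch you give there does not close.

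The difficulty is that knowing $\qftp_{\mathcal{L}}(\bar{b}_{i_{1}}\cdots\bar{b}_{i_{n}}/C)$ together with $\tp_{\mathcal{L}_{p}}(\bar{b}_{i_{1}}\cdots\bar{b}_{i_{n}}/\emptyset)$ does \emph{not} determine the $\mathcal{L}_{p}$-isomorphism type of $\langle\cl(C\bar{b}_{i_{1}}\cdots\bar{b}_{i_{n}})\rangle$ over $C$. Elements of $\langle C\bar{b}_{i}\rangle$ that genuinely mix $C$ and $\bar{b}_{i}$ may be coloured for some $i$ and not for others, and this is invisible to both pieces of data you record. Your claim that the resulting ``bounded data \ldots\ takes finitely many values in convex blocks'' is unsupported: the bound on the \emph{number} of extra coloured points (which does follow from $\delta\geqslant 0$ and rationality) says nothing about how their \emph{positions} vary along $I$, and nothing in your argument forces that variation to be convex. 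The further assertion that $\cl(C\bar{b}_{i_{1}}\cdots\bar{b}_{i_{n}})$ is ``the generic closure'' determined up to isomorphism by the $\mathcal{L}_{p}$-structure of the spine is false: the quantifier-free $\mathcal{L}_{p}$-type of a finite set does not determine its closure in $\mathfrak{C}$ --- only its full type does, which is precisely what you are trying to compute.

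The paper supplies the missing idea. Instead of controlling the mixed colours directly, it uses $D$-local character --- which has a \emph{finite} base when $\alpha$ is rational --- to find a finite $H\subseteq I$ with $C\forkindep^{D}_{H}I$. Then \Cref{lem:4.4} gives the free-amalgam decomposition
\[
\langle\cl(C\bar{b}_{i}H)\rangle=\langle\cl(CH)\rangle\oplus_{\langle\cl(H)\rangle}\langle\cl(\bar{b}_{i}H)\rangle,
\]
so there are \emph{no} new mixed elements at all. By \Cref{types-closures} this reduces $\tp_{\mathcal{L}_{p}}(\bar{b}_{i}/\cl(CH))$ to the pair consisting of $\tp_{\mathcal{L}_{p}}(\bar{b}_{i}/\cl(H))$ and $\tp_{\mathcal{L}}(\langle\cl(\bar{b}_{i}H)\rangle/\cl(CH))$. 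The first is constant on the finitely many convex pieces of $\kappa$ determined by the indices of $H$; the second is handled by strong dependence of $T$ applied (via \Cref{lem:4.0.23}) to the $\mathcal{L}$-indiscernible sequence $\{\langle\cl(\bar{b}_{i}H)\rangle\}$. In short, rationality of $\alpha$ enters not merely as ``closures are finite'' but as ``$D$-local character has a finite base'', and it is precisely this finite $H\subseteq I$ that dissolves the interaction between $C$ and the $\bar{b}_{i}$'s that your argument is unable to control.
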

\begin{proof}
Let $I=\{\bar{b}_i:\hspace*{3mm} i<\kappa\}$ be an indiscernible sequence and $C$ a finite set, as
in \Cref{fact:4.0.22}.
  By $D$-local character and since $\alpha$ is rational, there exists a finite set $H\subseteq I$ such that $C\forkindep^{D}_H I$, and in particular
$C\forkindep^{D}_H \bar{b}_i$ for each $i$.
 So by \Cref{lem:4.4} for every $i<\kappa$, $\langle\cl(\bar{b}_iCH)\rangle=\langle\cl(\bar{b}_iH)\rangle\oplus_{\langle\cl(H)\rangle} \langle\cl(CH)\rangle$.
 Hence by \Cref{types-closures}, $\tp(\bar{b}_i/\cl(CH))$ depends only on $\tp(\bar{b}_i/\cl(H))$ and $\tp_{\mathcal{L}}(\langle\cl(\bar{b}_iH)\rangle/\cl(HC))$.
  We will
 find a convex partition of $\kappa$
 in each part of which these two are
 fixed, and the statement follows from \Cref{fact:4.0.22}.
 \par
   Since $H$ is a finite subset of $I$ and $I$ is $\mathcal{L}_{p}$-indiscernible,
  there is a finite partition $J_1,\dots, J_k$ of $\kappa$ into convex sets such that each $I_j=\{\bar{b}_i: \hspace*{3mm} i\in J_j\}$ is indiscernible over $H$.
  So in particular     $\tp(\bar{b}_i/H)$--determined by
  $\langle\cl(\bar{b}_iH)\rangle$--
  is fixed for each $i\in J_j$ and $1\leqslant j\leqslant k$.
We may assume that each $J_j$ is infinite, as otherwise, the set $I_j$ can be added to $H$.
   \par
Since  $I_j$ is indiscernible over $H$, by \Cref{lem:4.0.23} it  is also  indiscernible over $\cl(H)$. Moreover again by \Cref{lem:4.0.23} we may assume that $I^{'}_j=\{\langle\cl(\bar{b}_iH)\rangle:\hspace*{3mm} i\in J_j\}$ is indiscernible over $H$.\par
 Now since $T$ is strongly dependent, by \Cref{fact:4.0.22},  there exists a convex equivalence relation $\sim_j$ over $J_j$ with finitely many equivalence classes such that $\{\langle\cl(\bar{b}_iH)\rangle: \hspace*{3mm}i\in \alpha/ \sim_j\}$ is $\mathcal{L}$-indiscernible over $\cl(CH)$. So, in particular, $\tp_{\mathcal{L}}(\langle\cl(\bar{b}_iH)\rangle /\cl(CH))$ is fixed on any equivalence class of $\sim_j$. Hence $\sim=\bigcup_{j=1}^k \sim_j$ forms a convex equivalence relation on $\kappa$ with finitely many equivalence classes such that $\tp(\bar{b}_i/\cl(HC))$ (and hence $\tp(\bar{b}_i/C)$) depends only on the  $\sim$-class of $i$.
\end{proof}
We will show in the following that in the particular case that $T$ defines a linear order,
 $\mathbb{T}_{\alpha}$ is  not strongly dependent for any
 irrational $\alpha$.
\begin{theo}
Assume that $T$ defines a linear order and $\alpha$ is irrational. Then
$\mathbb{T}_{\alpha}$
is not strongly dependent.
\end{theo}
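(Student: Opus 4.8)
The plan is to use the characterization of strong dependence in \Cref{fact:4.0.22}(2): it suffices to produce, inside the monster model $\mathfrak{C}$ of $\mathbb{T}_{\alpha}$, an $\mathcal{L}_{p}$-indiscernible sequence $(a_{i})_{i<\omega}$ of singletons over $\emptyset$ together with a \emph{finite} set $C$ for which $\{\tp(a_{i}/C):i<\omega\}$ is infinite; no equivalence relation on $\omega$ with finitely many classes can then have $\tp(a_{i}/C)$ depending only on the class, so $\mathbb{T}_{\alpha}$ is not strongly dependent. For the sequence we use the $\mathcal{L}$-definable linear order: if $P$ is the infinite $\mathcal{L}$-definable set linearly ordered by $<$, let $(a_{i})_{i<\omega}$ be a strictly $<$-increasing, $\dim$-independent sequence of non-colored elements of $P$; being generic over $\emptyset$, its finite subtuples have trivial and uniform intrinsic closure, so by \Cref{types-closures} the $\mathcal{L}_{p}$-type over $\emptyset$ is determined by the $\mathcal{L}$-quantifier-free type, whence $(a_{i})$ is $\mathcal{L}_{p}$-indiscernible over $\emptyset$. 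For $C$ we take a finite set whose intrinsic closure $\cl(C)$ is infinite, with the elements of $\cl(C)$ interleaved in $<$ with the $a_{i}$.

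The existence of such a $C$ is exactly where the irrationality of $\alpha$ enters. By \Cref{lem:1.25} and \Cref{lem:3.3.4}, for irrational $\alpha$ one can, over a suitable transcendental base, build minimal ``colored gadgets'' whose pre-dimension drop $s-\alpha k$ is as small as desired, because $\{m-\alpha n:m,n\in\mathbb{N}\}$ is dense and hence accumulates at $0$ (Dirichlet). Stacking such gadgets along a chain $C=L_{0}\leqslant_{i}L_{1}\leqslant_{i}L_{2}\leqslant_{i}\cdots$ with drops $\gamma_{n}$ chosen so that $\sum_{n}\gamma_{n}<\delta(C)$, every finite subset retains non-negative pre-dimension, which yields a countably generated structure $N\in\mathcal{K}_{\alpha}^{+}$ with $\cl_{N}(C)=\bigcup_{n}L_{n}$ infinite; moreover the $n$-th step may be made to introduce a new element $e_{n}\in P$ with the $e_{n}$ strictly increasing and $e_{n}<a_{n}<e_{n+1}$, all colorings staying confined to the gadgets and $\{a_{i}:i<\omega\}$ kept closed in $N$ (choosing $C$ and the fresh transcendentals $\dim$-free from $\{a_{i}\}$). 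Since $N$ is countably generated, richness of $\mathfrak{C}$ (\Cref{types-closures}) gives a strong embedding $N\hookrightarrow\mathfrak{C}$; strong embeddings preserve $\cl$ and closedness, so inside $\mathfrak{C}$ we have $\cl(C)=\cl_{N}(C)$ infinite, containing the $e_{n}$, while $(a_{i})$ is still $\mathcal{L}_{p}$-indiscernible over $\emptyset$.

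Now each $e_{n}$ lies in $\cl(C)\subseteq\Acl(C)$ (\Cref{rem:2.17}), so $e_{n}$ is $\mathcal{L}_{p}$-algebraic over $C$; its finitely many $\mathcal{L}_{p}$-conjugates over $C$ all lie in $P$ and are therefore linearly ordered by $<$, which makes $e_{n}$ in fact $\mathcal{L}_{p}$-\emph{definable} over $C$. Hence for each $i$ the type $\tp(a_{i}/C)$ contains the $\mathcal{L}_{p}$-formula ``$e_{i}<x<e_{i+1}$'' with parameters in $C$, and for $i\neq j$ these formulas are incompatible, since the intervals $(e_{i},e_{i+1})$ and $(e_{j},e_{j+1})$ are disjoint. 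Therefore $\{\tp(a_{i}/C):i<\omega\}$ is infinite, and by \Cref{fact:4.0.22}(2) the theory $\mathbb{T}_{\alpha}$ is not strongly dependent.

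The main obstacle is the construction of $N$: one must keep every finite subset of non-negative pre-dimension \emph{while} stacking infinitely many gadgets and \emph{simultaneously} realizing the prescribed order-interleaving of the $e_{n}$ with the $a_{i}$ --- the delicate $\delta$-bookkeeping behind \Cref{lem:1.25} and \Cref{lem:3.3.4} --- and this is precisely where a rational $\alpha$ would block the argument: by \Cref{lem:3.3.5} each minimal step then drops $\delta$ by at least $\tfrac{1}{n}$, so no infinite intrinsic chain, equivalently no finite set with infinite closure, can sit inside a structure of $\mathcal{K}_{\alpha}^{+}$, in accordance with \Cref{th:4.13}. A secondary point needing care is confirming that $(a_{i})$ stays genuinely $\mathcal{L}_{p}$-indiscernible over $\emptyset$ in $\mathfrak{C}$ and that the interleaving really produces infinitely many distinct types; both reduce to the description of types by closures in \Cref{types-closures}.
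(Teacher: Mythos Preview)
Your approach is correct and closely parallels the paper's, but is organized around a different characterization of strong dependence and uses a slightly different endgame. Both proofs exploit irrationality of $\alpha$ to build a single element (your $C$ can in fact be a singleton, as in the paper) whose closure is infinite, and both arrange for elements of that closure to be interleaved in the definable order with an $\mathcal{L}$-indiscernible array drawn from the ordered set. The paper then works directly with the dp-rank definition: it packages the array as a family $\{I_i:i<\omega\}$ of mutually $\mathcal{L}_p$-indiscernible sequences and shows that none of them remains indiscernible over the single point $b_0$, using the existential formula $\exists t\,(x<t<y\wedge\psi(t,b_0))$ with $\psi$ algebraic. You instead invoke \Cref{fact:4.0.22}(2), keep a single indiscernible sequence $(a_i)$, and separate the types $\tp(a_i/C)$ by first arguing that each $e_n\in\cl(C)\subseteq\Acl(C)$ is actually $\mathcal{L}_p$-\emph{definable} over $C$ (finitely many conjugates, linearly ordered, hence each is ``the $k$-th one''); this is a pleasant simplification and avoids the existential trick. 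Two small points to tighten: you should say explicitly that $(a_i)$ is taken $\mathcal{L}$-indiscernible (not merely $<$-increasing and $\dim$-independent), since your reduction of $\mathcal{L}_p$-type to $\mathcal{L}$-qftp only yields $\mathcal{L}_p$-indiscernibility from $\mathcal{L}$-indiscernibility; and when you say ``the $n$-th step may be made to introduce $e_n\in P$ with $e_n<a_n<e_{n+1}$'', note that the gadgets of \Cref{lem:1.25}/\Cref{lem:3.3.4} do not by themselves control positions in $<$ --- the paper handles this by taking the \emph{basis} part $E_n$ of each step from the pre-chosen indiscernible array (so the interleaving is automatic) and only the algebraic part $F_n$ from \Cref{lem:1.8}, which is exactly how your sketch should be filled in.
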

\begin{proof}
Let  $<^{\mathfrak{C}}$ be the interpretation of the order defined by $T$.
We construct
$\mathcal{L}_{p}$-mutually indiscernible sequences
$\{I_{i}:\hspace*{2mm}i<\omega\}$ and
find $b_{0}\in \mathfrak{C}$ such that
$I_{i}=\{a_{ij}:\ j<\omega\}$ is not $\mathcal{L}_p$-indiscernible  over
$b_{0}$ for each
$i<\omega$.
\par
Let $J=\{c_{ij}:\ (i,j)\in\omega\times \omega\}$ be an $\mathcal{L}$-indiscernible sequence over $\emptyset$, ordered by  $\omega\times \omega$ with horizontal lexicographic order. For convenience,  we may assume the sequence $J$ is increasing with respect to $<^{\mathfrak{C}}$.  We
now inductively define a sequence $(D_n,E_n,F_n)_{ n<\omega}$ of subsets of $\mathfrak{C}$ and a sequence $(s_{n+1},k_{n+1})_{n<\omega}$ of tuples of natural numbers
which,  in particular, have the following properties:
\begin{enumerate}

\item   $E_n\subseteq J_n=\{c_{nj}:j\in \omega\}$ and $|E_{n+1}|=s_{n+1}$.
\item   $|F_{n+1}|=k_{n+1}-s_{n+1}$ and $F_{n+1}\subseteq \acl(D_n E_{n+1})$. Furthermore each $s_{n+1}$-element subset of $E_{n+1}F_{n+1}$ is a basis over $D_n$.

\item $D_0\subseteq D_1\subseteq \dots \subseteq D_n$,  $D_n\subseteq  \acl(\bigcup_{i=0}^{n} E_i)$ and  $D_n\cap J=\bigcup_{i=0}^n E_i$.

\end{enumerate}

\par
 For $n=0$, let $E_0=D_0=\{c_{01}\}$ and $F_0=\emptyset$. Suppose by induction that the sets $(D_n,E_n,F_n)$ are constructed and $D_n$ satisfies  condition $3$ above. By Dirichlet's  rational approximation theorem there is a pair of natural numbers $s_{n+1},k_{n+1}$ with $-(1-\alpha)/2^{n+1}< s_{n+1}- \alpha k_{n+1}< 0$. Let $E_{n+1}\subseteq J_{n+1
}$ be the set
$ \{c_{(n+1)1}, c_{(n+1)2},\ldots, c_{(n+1)s_n}\} $. Since $J$ is an indiscernible sequence over $\emptyset$, by Fact \ref{f:2.3}, $E_{n+1}$ is $\dim$-independent from $D_n$ over $\emptyset$. Hence $\dim(E_{n+1}/D_n)=s_{n+1}$.
By indecomposablity of $T$  let $F_{n+1}$ be the set consisting of   $k_{n+1}-s_{n+1}$ elements in $\acl(E_{n+1}D_n)$, with the property that each $s_{n+1}$-element subsets of $E_{n+1}F_{n+1}$ forms a basis  over $D_n$. Finally set $D_{n+1}=D_n\cup E_{n+1}\cup F_{n+1}$. Note that
$\dim(D_{n+1}/D_n)=s_n$ and $|D_{n+1}\setminus D_n|=k_n$. Furthermore,  $D_{n+1}\subseteq \acl(\bigcup_{i=0}^{n+1}E_i)$.
\par
 Now set $D=\bigcup_{i=0}^{\infty} D_i$ and $I=J\setminus  D$. Notice that  $I=J\setminus\bigcup_{i=0}^{\infty} E_i$. So $I$ is also an $\mathcal{L}$-indiscernible sequence over $\emptyset$.
  So we can re-enumerate $I$ and represent it by $\{d_{ij}:\hspace*{2mm}(i,j)\in\omega\times \omega\}$.
By the construction,
$D\subseteq \acl(\bigcup_{i=0}^{\infty} E_i)$.  On the other hand since $J$ is  $\mathcal{L}$-indiscernible, it follows from Fact \ref{f:2.3} that  $D\forkindep^{dim}_{\emptyset} I$ and $\acl(I)\cap\acl( D)=\acl(\emptyset)$.
\par
Let $H=\langle D\cup I \rangle$. Then $H= \langle D\oplus_{\emptyset}I\rangle =\langle D\rangle \oplus_{\emptyset} \langle I \rangle$. Now   we recolor $H$ by letting
only the elements in $ D$ colored and the rest non-colored. Hence the following conditions are fulfilled.
\begin{itemize}
\item For each $n$, $D_n\in \mathcal{K}_{\alpha}^{+}$, and hence $D\in \mathcal{K}_{\alpha}^{+}$.
\item $I,H\in \mathcal{K}_{\alpha}^{+}$.
\item  Every finite subset of $I$ as well as $I$ itself are closed  in $H$.
\item   $(D_n,D_{n+1})$ is a minimal pair, for each $n\in\omega$. Hence $\cl_H(c_{01})=D$.
\end{itemize}
Suppose that $f:H\to \mathfrak{C}$ is a strong embedding and $b_n=f(c_{n1})$ and $a_{ij}=f(d_{ij})$ for each $i,j,n\in\omega$. Then it is clear that $I_i=\{a_{ij}:\hspace*{2mm}j\in\omega\}$ are $\mathcal{L}_p$-mutually  indiscernible sequences over $\emptyset$. Furthermore
$a_{n0}<^{\mathfrak{C}}b_{n}<^{\mathfrak{C}}a_{n1}$ for each $n\in\omega$. We claim that non of the
$I_i$ is  $\mathcal{L}_{p}$-indiscernible over
$b_0$ for each
$i\in\omega$.
\par
Assume for a contradiction,  that $I_i$ is indiscernible over $b_0$, for some $i$ . Note that since $b_i\in \cl(b_0)\subseteq \Acl(b_{0})$, there exists an algebraic $\mathcal{L}_p$-formula $\psi(x,b_0)$ which is satisfied by $b_i$.
Let $\varphi(x,y,b_0)$ be the formula
\[\exists t \quad (x<^{\mathfrak{C}}t<^{\mathfrak{C}}y \wedge \psi
(t,b_0)).\] Then
$\varphi(a_{i0},a_{i1},b_0)$ holds. But since  $I_i$ is  indiscernible over $b_0$,
$\varphi(a_{ij},a_{ij+1},b_0)$ holds  for each $j\in\omega$.

But this gives infinitely many elements satisfying $\psi(x,b_0)$ which   contradicts  the algebraicity  of $\psi(x,b_0)$.

\end{proof}
\begin{cor}
If $\alpha$ is irrational then $\mathbb{ODAG}_{\alpha}$ and $\mathbb{RCF}_{\alpha}$ are not strongly dependent.
\end{cor}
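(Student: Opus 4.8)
The strategy is to violate the definition of strong dependence head‑on: I will exhibit, in a monster model $\mathfrak{C}$ of $\mathbb{T}_{\alpha}$, a family $\{I_{i}:i<\omega\}$ of $\mathcal{L}_{p}$‑mutually indiscernible sequences over $\emptyset$ together with a single element $b_{0}$ such that \emph{no} $I_{i}$ remains indiscernible over $b_{0}$; this shows $\dprk(x=x,\emptyset)\geqslant\aleph_{0}$, hence $\mathbb{T}_{\alpha}$ is not strongly dependent. Both hypotheses enter through one design principle. Since $\alpha$ is irrational, $\cl(b_{0})$ can be made \emph{infinite} (recall \Cref{rem:2.17}); I will arrange that it contains, for each $i$, an element $b_{i}$ with $a_{i0}<^{\mathfrak{C}}b_{i}<^{\mathfrak{C}}a_{i1}$, where $<^{\mathfrak{C}}$ is the order defined by $T$ and $a_{i0},a_{i1}$ are the first two terms of $I_{i}$. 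Then $b_{i}\in\cl(b_{0})\subseteq\Acl(b_{0})$ gives an algebraic $\mathcal{L}_{p}$‑formula $\psi(x,b_{0})$ with $\psi(b_{i},b_{0})$, and the formula $\varphi(x,y,b_{0}):=\exists t\,(x<^{\mathfrak{C}}t<^{\mathfrak{C}}y\wedge\psi(t,b_{0}))$ holds of $(a_{i0},a_{i1})$; if $I_{i}$ were indiscernible over $b_{0}$ it would hold of every consecutive pair of $I_{i}$, yielding infinitely many realizations of $\psi(\cdot,b_{0})$ and contradicting algebraicity.

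To produce this configuration I will first realize it inside an explicit member of $\mathcal{K}_{\alpha}^{+}$ and then transport it into $\mathfrak{C}$ along a strong embedding, which exists because $\mathfrak{C}$ is rich (\Cref{types-closures}). Fix in a model of $T$ an $\mathcal{L}$‑indiscernible sequence $J=\{c_{ij}:(i,j)\in\omega\times\omega\}$ over $\emptyset$, indexed by $\omega\times\omega$ with the lexicographic order and increasing for $<^{\mathfrak{C}}$; by \Cref{f:2.3} it is a $\dim$‑Morley sequence. The core is an induction producing finite sets $D_{0}\subseteq D_{1}\subseteq\cdots$ with $E_{n}$ a prefix of the $n$‑th row of $J$ \emph{omitting} $c_{n0}$, and $F_{n}\subseteq\acl(D_{n-1}E_{n})$. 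At step $n$, Dirichlet's rational approximation theorem provides naturals $1\leqslant s_{n+1}<k_{n+1}$ with $-(1-\alpha)/2^{n+1}<s_{n+1}-\alpha k_{n+1}<0$; take $E_{n+1}=\{c_{(n+1)1},\dots,c_{(n+1)s_{n+1}}\}$, and by indecomposability (\Cref{lem:1.8}) choose $F_{n+1}\subseteq\acl(E_{n+1}D_{n})$ of size $k_{n+1}-s_{n+1}$ so that each $s_{n+1}$‑element subset of $E_{n+1}F_{n+1}$ is a basis over $D_{n}$; set $D_{n+1}=D_{n}\cup E_{n+1}\cup F_{n+1}$. Then $\delta(D_{n+1}/D_{n})=s_{n+1}-\alpha k_{n+1}$ is negative but exceeds $-(1-\alpha)/2^{n+1}$, so telescoping these increments keeps every $\delta(D_{n})\geqslant 0$ and makes each $(D_{n},D_{n+1})$ a minimal pair.

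Now put $D=\bigcup_{n}D_{n}$ and $I=J\setminus D=J\setminus\bigcup_{n}E_{n}$, so that $I$ retains $c_{n0}$ and the tail $c_{n,s_{n}+1},c_{n,s_{n}+2},\dots$ from each row, is still $\mathcal{L}$‑indiscernible over $\emptyset$, and can be re‑enumerated row‑by‑row as $\{d_{ij}:(i,j)\in\omega\times\omega\}$ with $d_{i0}=c_{i0}$ and $d_{i1}=c_{i,s_{i}+1}$. Since $J$ is $\dim$‑Morley, $D\forkindep^{\dim}_{\emptyset}I$ and $\acl(D)\cap\acl(I)=\acl(\emptyset)$, whence $H:=\langle D\cup I\rangle=\langle D\rangle\oplus_{\emptyset}\langle I\rangle$; colour $H$ so that exactly the elements of $D$ are coloured. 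One checks that each $D_{n}$, $D$, $I$ and $H$ lie in $\mathcal{K}_{\alpha}^{+}$, that every finite subtuple of $I$ (and $I$ itself) is closed in $H$, and — because the $(D_{n},D_{n+1})$ form a tower of minimal pairs issuing from $c_{01}$ — that $\cl_{H}(c_{01})=D$. Taking a strong embedding $f:H\to\mathfrak{C}$ and setting $b_{n}=f(c_{n1})$, $a_{ij}=f(d_{ij})$, the sequences $I_{i}=\{a_{ij}:j<\omega\}$ are $\mathcal{L}_{p}$‑mutually indiscernible over $\emptyset$ (the relevant part of $I$ being uncoloured, $\mathcal{L}_{p}$‑indiscernibility reduces to the $\mathcal{L}$‑indiscernibility of $I$), $b_{n}\in\cl(b_{0})$, and $a_{n0}=f(c_{n0})<^{\mathfrak{C}}f(c_{n1})=b_{n}<^{\mathfrak{C}}f(c_{n,s_{n}+1})=a_{n1}$ because $c_{n0}<c_{n1}<c_{n,s_{n}+1}$ in $J$. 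The contradiction of the first paragraph then applies to each $I_{i}$.

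The main obstacle I anticipate is the quantitative bookkeeping in the induction: one must simultaneously keep $\delta(D_{n})\geqslant 0$ (so the whole configuration lands in $\mathcal{K}_{\alpha}^{+}$), force $\delta(D_{n+1}/D_{n})$ to be \emph{strictly} negative (this is what drives $c_{01}$'s closure up through every $D_{n}$ and so produces the infinitely many distinct $b_{n}$), and preserve minimality of each $(D_{n},D_{n+1})$ — all while choosing $E_{n}$ to start at $c_{n1}$ rather than $c_{n0}$, which is precisely what leaves $a_{n0}=f(c_{n0})$ available just below $b_{n}$. The delicate input is getting Dirichlet's theorem to supply exponents whose error $s_{n+1}-\alpha k_{n+1}$ is negative but of magnitude below $(1-\alpha)/2^{n+1}$, and then checking that the telescoping sum of these errors never pushes any partial $\delta(D_{n})$ below $0$; once that is in place, the freeness of $D$ from $I$, the fact that the recolouring stays in $\mathcal{K}_{\alpha}^{+}$, the identification $\cl_{H}(c_{01})=D$, and the closing order‑theoretic argument are all comparatively routine.
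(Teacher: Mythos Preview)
Your proposal is correct and follows essentially the same route as the paper: the corollary is an immediate consequence of the preceding theorem (that $\mathbb{T}_{\alpha}$ is not strongly dependent whenever $T$ defines a linear order and $\alpha$ is irrational), and the construction you outline---the indiscernible array $J$, the inductive tower $D_{0}\subseteq D_{1}\subseteq\cdots$ built via Dirichlet's theorem and indecomposability, the recolouring of $H=\langle D\cup I\rangle$, the transport by a strong embedding, and the final order/algebraicity contradiction---is precisely the paper's proof of that theorem, down to the choice of bounds $-(1-\alpha)/2^{n+1}<s_{n+1}-\alpha k_{n+1}<0$ and the identification $d_{i0}=c_{i0}$, $d_{i1}=c_{i,s_{i}+1}$.
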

Finally bring the paper to an end with a curious counterexample on the distality of $\mathbb{T}_{\alpha}$ assuming $T$ itself is distal.
\par
Recall from \cite{P13} that a dependent theory  $T^{'}$ is distal if for any indiscernible  sequence $I$,  every set $A$,
tuple $\bar{b}$ and $A$-indiscernible sequence $I^{'} = I_{1} + I_{2}$ with $I_{1}$ and $I_{2}$ without endpoints and  EM-$\tp(I^{'})$=EM-$\tp(I)$, if $I_{1} +\bar{b}+ I_{2}$ is indiscernible,
then it is $A$-indiscernible.
\par
Unlike the strong dependence  we will see that distality does not transfer to $\mathbb{T}_{\alpha}$,
both for rational and irrational $\alpha$,
 even when we start with
as distal a theory as the o-minimal theories.
\begin{exm}
Let $T$ be  any o-minimal expansion of ODAG. Then $\mathbb{T}_{\alpha}$ is not distal for any $0<\alpha\leqslant 1$.\\
Let  $A=\{a\}$ be a one-element set,  $I^{'}=I_{1}+I_{2}$  with both $I_{1}$ and $I_{2}$ sequences of elements in $\mathfrak{C}$  ordered by rational numbers and an element $b\in \mathfrak{C}$  such that  $I_{1}+b+I_{2}$ is $\mathcal{L}$-indiscernible  over $A$. Put $I=I_{1}$.  It follows  from the Fact \ref{morley-sequence} that  $I_{1}+b+I_{2}$ is a $\dim$-Morley  sequence over $A$. In particular $a+b\notin \langle I^{'} A\rangle$. Now to find an $\mathcal{L}_{p}$-indiscernible sequence  which violates the distality we color the  elements of $\mathcal{P}=\langle I^{'} \{b\}  A\rangle$ by letting $a+b$ colored and the rest  non-colored. Then it follows that $\mathcal{P}\in \mathcal{K}_{\alpha}^{+}$ and if $f:\mathcal{P}\to \mathfrak{C}$ is a strong embedding  then we have that  $f(I^{'})$ is an $\mathcal{L}_{p}$-indiscernible over $f(A)$ and $f(I_{1})+f(b) +f(I_{2})$ is an $\mathcal{L}_{p}$-indiscernible but it is not the case that $f(I_{1})+f(b) +f(I_{2})$ is an $\mathcal{L}_{p}$-indiscernible over
$f(A)$. Hence  this shows that $\mathbb{T}_{\alpha}$ is not distal.
\end{exm}


\begin{thebibliography}{10}

\bibitem{BH00}
 J. Baldwin, K. Holland, \textit{Constructing $\omega$-stable structures: Rank 2 fields},
  J.Symbolic Logic, 65, 371-391, (2000).

\bibitem{BH01}
 J. Baldwin, K. Holland, \textit{Constructing $\omega$-stable structures: Computing rank},
 Fundamenta Mathematicae,
 170, 1-20, (2001).

 \bibitem{BS96}
J. Baldwin,  N. Shi,\textit{ Stable generic structures}, Annals of Pure and Applied Logic,  79,  1-35,  (1996).

\bibitem{BS97}
J. Baldwin, S. Shelah, \textit{Randomness and semigenericity}, Trans. AMS, 349, 1359-1376, (1997).

\bibitem{BPZ07}
A. Baudisch, M. Pizarro, M. Ziegler,
\textit{Red fields},
 J. Symbolic Logic,
72,207-225, (2007).

\bibitem{BMPWZ09}
A. Baudisch, M. Hils, M. Pizarro, F. Wagner,
\textit{Die b\"{o}se farbe},
Journal of the Institute of Mathematics of Jussieu,
8,415-443,(2009).

\bibitem{BDO11}
A. Berenstein,
A. Dolich,
A. Onshuus,
\textit{The independence property in generalized dense pairs of structures},
 J. Symbolic Logic, 76,  391-404, (2011).

\bibitem{BV10}
 A. Berenstein, E. Vassiliev,
 \textit{On lovely pairs of geometric structures},
 Annals of pure and applied logic,
 161, 866-878,(2010).


\bibitem{BV20}
 A. Berenstein, E. Vassiliev,
\textit{Fields with a dense-codense linearly independent multiplicative subgroup},
Archive for Mathematical Logic,
59,197-228,(2020).

\bibitem{CP98}
Z. Chatzidakis, A. Pillay,
\textit{Generic structures and simple theories},
Annals of Pure and Applied Logic, 95, 71-92, (1998).

\bibitem{DG06}
 L. Van Den Dries, A. G\"{u}naydin,
\textit{The fields of real and complex numbers with a small multiplicative group},
Proceedings of the London Mathematical Society,
93,43-81,(2006).

\bibitem{D16}
A. Dolich, C. Miller,  C. Steinhorn,
\textit{Expansions of o-minimal structures by dense independent sets},
Annals of Pure and Applied Logic, 167,  684-706, (2016).

\bibitem{E21}
C. d\'Elb\'{e}e,
\textit{Generic expansions by a reduct},
Journal of Mathematical Logic,
21,(2021).




\bibitem{G05}
J. Gagelman,
\textit{Stability in geometric theories},
Annals of Pure and Applied Logic 132,  313-326,(2005).



\bibitem{HH06}
 A. Hasson, M. Hils,
\textit{Fusion over sub-languages},
 J. Symbolic Logic, 71, 361-398,  (2006).

\bibitem{Hr93}
E. Hrushovski,
\textit{A new strongly minimal set},
Annals of Pure and Applied Logic, 62 ,147-166, (1993).

\bibitem{J11}
S. Jukna,
\textit{ Extremal Combinatorics},
Springer, (2011).



\bibitem{L07}
M. Laskowski,
\textit{A simple axiomatization of the Shelah-Spencer almost sure theory},
Israel Journal of Mathematics, 161,  157-186,(2007).

\bibitem{B99}
B. Poizat,
\textit{Le carr\'{e} de l'\'{e}galit\'{e} },
 J. Symb. Logic,
64, 1338-1355, (1999).

\bibitem{B01}
 B. Poizat,
 \textit{L'\'{e}galit\'{e} au cube},
  J. Symb. Logic,
  66, 1647-1676, (2001).

 \bibitem{B83}
 B. Poizat,
 \textit{Paires de structures stables},
 J. Symbolic Logic,
  48,  234-249, (1983).
\bibitem{Sh}
S. Shelah,
\textit{Strongly dependent theories},
Israel Journal of Mathematics,204,1-83 ,(2014).


 \bibitem{P15}
P. Simon,
\textit{ A guide to NIP theories }, Lecture Notes in Logic,
Cambridge, Cambridge University Press, (2015).

\bibitem{P13}
P. Simon,
\textit{Distal and non-distal NIP theories},
Annals of Pure and Applied Logic,
164, 294-318, (2013).



\bibitem{w10}
F. Wagner,
\textit{Fields and fusions: Hrushovski constructions and their
definable groups},
 J. Symbolic Logic,
72,27-40, (2010).


\bibitem{Y02}
 B. Yaacov, A. Pillay, E. Vassiliev,
\textit{ Lovely pairs of models},
Annals of Pure and Applied Logic,
122, 235-261, (2003).


 \bibitem{M13}
M. Ziegler,
\textit{An exposition of Hrushovski's new strongly minimal set},
Annals of Pure and Applied Logic, 164,  1507-1519, (2013).






\end{thebibliography}
\end{document}